\newcommand\tc[2]{\theta\chars{#1}{#2}}
\newcommand\chars[2]{\left[\begin{smallmatrix}#1\\ #2\end{smallmatrix}\right]}
\newcommand{\CC}{{\mathbb{C}}}
\newcommand{\HH}{{\mathbb{H}}}
\newcommand{\PP}{{\mathbb{P}}}
\newcommand{\QQ}{{\mathbb{Q}}}
\newcommand{\RR}{{\mathbb{R}}}
\newcommand{\ZZ}{{\mathbb{Z}}}
\newcommand{\calO}{{\mathcal O}}
\newcommand{\calT}{{\mathcal T}}
\newcommand{\calA}{{\mathcal A}}
\newcommand{\calM}{{\mathcal M}}
\newcommand{\calX}{{\mathcal X}}
\newcommand{\calY}{{\mathcal Y}}
\newcommand{\op}{\operatorname}
\newcommand{\ab}[1][3]{\calA_{#1}}
\newcommand{\ua}[1][2]{\calX_{#1}}
\newcommand{\wub}[1][1]{\widetilde\calY_{#1}}
\newcommand{\wDelta}{\widetilde{\Delta}}
\newcommand{\oub}[1][1]{\overline{\calY}_{#1}}
\newcommand{\oa}[1][3]{\overline{\calA_{#1}}}
\newcommand{\ox}[1][2]{\overline{\calX_{#1}}}
\newcommand{\Sat}[1][3]{{\calA_{#1}^{\op {Sat}}}}
\newcommand{\Perf}[1][g]{{\calA_{#1}^{\op {Perf}}}}
\newcommand{\Vor}[1][g]{{\calA_{#1}^{\op {Vor}}}}
\newcommand{\Centr}[1][g]{{\calA_{#1}^{\op {Centr}}}}
\newcommand{\Sp}{\op{Sp}}
\newcommand{\PSL}{\op{PSL}}
\newcommand{\GL}{\op{GL}}
\newcommand{\SL}{\op{SL}}
\newcommand{\Exc}{\op{Exc}}
\newcommand{\Hyp}{\op{Hyp}}
\newcommand{\Sym}{\op{Sym}}
\newcommand{\Pic}{\op{Pic}}
\newcommand\Eff[1][2]{\operatorname{Eff}_{#1}}
\newcommand\Nef[1][2]{\operatorname{Nef}^{#1}}
\newcommand\oE[1][2]{\overline{\operatorname{Eff}}_{#1}}
\newcommand\si[1]{\sigma_{#1}}
\newcommand\osi[1]{\overline{\si{#1}}}
\theoremstyle{plain}
\newtheorem{thm}{Theorem}[section]
\newtheorem{lm}[thm]{Lemma}
\newtheorem{prop}[thm]{Proposition}
\newtheorem{cor}[thm]{Corollary}
\newtheorem{conj}[thm]{Conjecture}
\theoremstyle{definition}
\newtheorem{df}[thm]{Definition}
\newtheorem{rem}[thm]{Remark}
\title{On the cone of effective surfaces on $\oa$}
\author{Samuel Grushevsky}
\address{Mathematics Department, Stony Brook University,
Stony Brook, NY 11794-3651, USA}
\email{sam@math.stonybrook.edu}
\thanks{Research of the first author is supported in part by the National Science Foundation under the grant DMS-18-02116.}
\author{Klaus Hulek}
\address{Institut f\"ur Algebraische Geometrie, Leibniz Universit\"at Hannover, Welfengarten 1, 30060 Hannover, Germany}
\email{hulek@math.uni-hannover.de}
\thanks{Research of the second author is supported in part by DFG grant Hu-337/7-1.}
\begin{document}
\begin{abstract}
We determine five extremal effective rays of the four-di\-men\-sio\-nal cone of effective surfaces on the toroidal compactification $\oa$ of the moduli space $\ab$ of complex principally polarized abelian threefolds, and we conjecture that the cone of effective surfaces is generated by these surfaces.
As the surfaces we define can be defined in any genus $g\ge 3$, we further conjecture that they generate the cone of effective surfaces on the perfect cone compactification $\Perf[g]$ for any $g\ge 3$.
\end{abstract}

\maketitle

\section*{Introduction}
For any projective variety~$X$ the effective cone $\Eff[k](X)\subset H_{2k}(X,\RR)$  is  the cone generated  by classes of dimension~$k$ subvarieties.
The closure $\oE[k](X)$ is called the pseudoeffective cone. We recall that a pseudoeffective class $S\subset \oE[k](X)$ is called an {\em extremal ray} if the equality $S=S_1+S_2$,
with $S_1,S_2\in\oE[k](X)$, implies that $S_1$ and $S_2$ are multiples of~$S$.  Following~\cite{chcohighercodim}, we will call an effective class $S\in \Eff[k](X)$ an {\em extremal effective ray}  if the
equality $S=S_1+S_2$, with $S_1,S_2\in\Eff[k](X)$, implies that $S_1$ and $S_2$ are multiples of~$S$.

If~$X$ is the underlying variety of a  smooth $n$-dimensional DM stack, a situation we will typically be in, then $X$ satisfies Poincar\'e duality and codimension~$k$ subvarieties define classes in the cohomology group $H^{2k}(X,\RR)$.
The nef cone $\Nef[k](X)\subset H^{2k}(X,\RR)$  is then defined as the cone generated by the classes of effective subvarieties~$N$ of codimension $k$ such that for any $k$-dimensional subvariety $S\in \Eff[k](X)$ the pairing $\langle S, N\rangle$ is non-negative.

Motivated partly by the desire to know whether the canonical class $K_X$ is (pseudo)effective for a given~$X$, as this controls the Kodaira dimension, the study of the cone of effective divisors of various
varieties is a very rich and developed field. Algebraic curves on varieties provide a method to probe various geometric properties of the variety as well, and thus the cone of effective curves $\oE[1](X)$, dual to $\Nef[1](X)$, has also
been actively investigated for various varieties. One particularly fertile ground for such study has been moduli spaces. Since the foundational works of Mumford, the cone of effective divisors on the moduli space of stable curves has
 been actively investigated, with much progress made,  see the survey by Chen, Farkas, Morrison~\cite{chfamo}, and with Mullane~\cite{mullane} recently showing that the cone of effective divisors may not be finitely generated.
 On the other hand, the cone of nef divisors on the moduli of stable curves was studied for example by Gibney, Keel, and Morrison~\cite{gikemo}.

More recently, the cones of higher dimensional effective cycles on moduli of stable curves were studied by Chen and Coskun~\cite{chcohighercodim} who constructed not finitely generated effective cones. Furthermore, Mullane~\cite{mullane1} and Chen and Tarasca~\cite{chentarasca} constructed other extremal effective higher codimension classes, and Blankers~\cite{blankers} showed the extremality of all boundary strata that parameterize curves with rational tails, while Schaffler~\cite{schaffler} specifically studied the cone of effective surfaces on $\overline{\calM}_{0,7}$. In general, cones of higher dimensional (not divisorial) effective cycles have been attracting much attention recently, see for example
by~\cite{bastiaetal} and~\cite{coskunlesieutreottem}.

\smallskip
Our focus in this paper is the effective cycles on the compactifications of the moduli stack $\calA_g$ of (complex) principally polarized abelian varieties. The moduli stack $\calA_g$ is not compact and admits a minimal Satake-Baily-Borel compactification
$\Sat[g]$ as well as various toroidal compactifications. A fundamental result of Shepherd-Barron~\cite{shepherdbarron} is the determination of the cone of nef divisors of the perfect cone toroidal compactification $\Perf$ of~$\calA_g$, for any $g$.
Denoting by $L_g$ the class of the Hodge bundle on $\Perf$, and by~$D_g$ the class of the boundary divisor $D_g=\partial\Perf$ (which is irreducible for this toroidal compactification), Shepherd-Barron proves that $\Nef[1](\Perf)=\RR_{\ge 0}L_g+\RR_{\ge 0} M_g$, where from now on we denote $M_g:=12L_g-D_g$. Dually, this is equivalent to determining generators for the cone of effective curves $\oE[1](\Perf)$: Shepherd-Barron shows that it is generated by the classes of any curve $C_F\subset\partial\Perf$ that is contracted under the morphism $\pi:\Perf\to\Sat[g]$ to the Satake compactification and the class of the curve $C_A=\oa[1]\times [B]$, where $[B]\in\Perf[g-1]$ denotes any fixed point.

For arbitrary genus, the study of the cone of effective divisors on~$\Perf$ is a well developed subject, of independent interest as the question of the minimal slope of Siegel cusp forms. In genus 2 it is classically known
that the cone of effective divisors is generated by the decomposable locus  and the boundary $D_2$:
\begin{equation*}\label{equ:effsurfacesgenus2}
 \oE(\oa[2])= \RR_{\geq 0}\, \oa[1] \times \oa[1] + \RR_{\geq 0} D_2
\end{equation*}
where we have used the same notation for the product $\oa[1] \times \oa[1]$ and its image in $\oa[2]$ (see also the discussion below). In genus 3 the slope of the effective cone is equal to 9, and the cone of effective divisors is equal to $\oE[5](\Perf[3])=\RR_{\geq 0}D + \RR_{\geq 0} \Hyp_3$, where $\Hyp_3$ denotes the locus of hyperelliptic Jacobians. The cone of effective divisors is also known precisely for $\Perf[4]$ and $\Perf[5]$. The proof that the slope of the effective cone of $\Perf[6]$ is at most 7 is the breakthrough result of~\cite{sma6}, while for $g\to\infty$ the slope of the cone of effective divisors of $\Perf$ goes to zero by the results of Tai~\cite{tai}, see~\cite{grAgsurvey}.
For the central cone toroidal compactification $\Centr$ and for the second Voronoi toroidal compactification $\Vor$ of $\calA_g$ much less is known about the divisors, but the nef cone $\Nef[1](\Vor[4])$ was computed by the second author and Sankaran~\cite{husaA4}.

\smallskip
In this paper we make the first step towards exploring higher dimensional effective cycles on $\calA_g$ and its compactification. While, as mentioned above, already the situation for effective curves is unknown for $\Centr$ and $\Vor$ for high genus, the situation is special in genus $g\leq 3$ where the perfect cone, the second Voronoi, and the central cone compactifications of $\calA_g$
all coincide. For $g=2$ the effective surfaces are effective divisors, and both $\oE[1](\overline{\calA}_2)$ and $\oE[2](\overline{\calA}_2)$ are fully known, as discussed above. Our current interest will be the $g=3$ case, where we denote $\oa:=\Perf[3]=\calA_3^{\op {Vor}}=\calA_3^{\op {Cent}}$. We  recall that $\oa$ is a smooth DM stack where, furthermore, Chow and homological equivalence coincide, for the latter see \cite{huto1}.

The main result of our paper is to exhibit five extremal effective rays of the cone of effective surfaces on $\oa$.
To describe these surfaces  we recall that for any $0<i<g$ there is a finite morphism $N:\Perf[i]\times \Perf[g-i]\to\Perf[g]$, which is the normalization of the locus of decomposable abelian varieties (if $i \neq g-i$, otherwise it is the $2:1$ map
to the symmetric product).
Using the extremal rays of $\Eff[1](\oa[2])$, we define the following four surfaces in the product $\oa[1]\times\oa[2]$:
\begin{equation}\label{eq:surfaces12}
\begin{aligned}
S_{AA}:=\oa[1]\times C_A; \quad & S_{AF}:=\oa[1]\times C_F; \\
 S_{DD}:=[B]\times\partial\oa[2]; \quad & S_{DA}:=[B]\times N(\oa[1]\times\oa[1]).
\end{aligned}
\end{equation}
Here $[B]\in\oa[1]$ is a fixed point, and the motivation for the indexing is that the first subscript indicates whether we take all of $\oa[1]$ or its boundary (noting that
any point on $\oa[1]$ is equivalent to the cusp), and the second subscript indicates the kind of subvariety taken in the~$\oa[2]$ factor. This defines three surfaces in $\oa$:
\begin{itemize}
\item $S_A:=N(S_{AA})=N(S_{DA})$, pa\-rameter\-izing abelian three\-folds which are products of three elliptic curves one of which is fixed,
\item $S_F:=N(S_{AF})$, parameterizing abelian threefolds which are the product of some elliptic curve and a torus rank $1$ semi-abelic surface over a fixed elliptic curve.
\item $S_D:=N(S_{DD})$, parameterizing abelian threefolds which are the product of a fixed elliptic curve with a singular semi-abelic surface.
\end{itemize}

\begin{thm}[Main theorem]\label{thm:main}
The following five surfaces are extremal effective rays of $\Eff(\oa)$:
\begin{itemize}
\item the surfaces $S_A$, $S_F$ and $S_D$
\item the closures $\osi{K3+1}$ and $\osi{C4}$ of the two-dimensional boundary strata $\si{K3+1}$ and $\si{C4}$ of $\oa$.
\end{itemize}
\end{thm}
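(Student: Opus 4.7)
The plan is to treat the five surfaces in two families: the three product surfaces $S_A$, $S_F$, $S_D$, which lie in the decomposable locus and are images under the finite normalization map $N\colon\oa[1]\times\oa[2]\to\oa$, and the two boundary strata $\osi{K3+1}$ and $\osi{C4}$, which are closures of irreducible two-dimensional cells in the perfect cone stratification of $\partial\oa$.

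For $S_A$, $S_F$, and $S_D$, the first step is to show that in any effective decomposition $S=S_1+S_2$ both pieces are supported on the decomposable locus, i.e.\ on the image of $N$. To this end, I would exhibit suitable nef divisor classes on $\oa$, assembled from the Hodge class $L_3$ and the boundary class $D_3$ using Shepherd-Barron's description of $\Nef[1](\oa)$, for which the corresponding intersection numbers with $S$ vanish while positivity against an effective surface forces set-theoretic containment in the image of $N$. Once the decomposition is pulled back via the finite map $N$, the surfaces $S_{AA}$, $S_{AF}$, $S_{DD}$, $S_{DA}$ of~\eqref{eq:surfaces12} are products of extremal effective classes on the two factors $\oa[1]$ and $\oa[2]$. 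Using a K\"unneth-type argument on $\oa[1]\times\oa[2]$, together with the known description of $\oE[1](\oa[2])$ as generated by $C_A$, $C_F$, $\partial\oa[2]$ and $N(\oa[1]\times\oa[1])$ and the extremality of each of these generators, one deduces extremality of the four product surfaces in $\Eff(\oa[1]\times\oa[2])$; finiteness of $N$ then transports extremality to $\oa$.

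For the boundary strata $\osi{K3+1}$ and $\osi{C4}$, I would exploit the combinatorial structure of the perfect cone decomposition of the genus~$3$ cone of positive semi-definite quadratic forms and the induced stratification of $\partial\oa$ by $\GL_3(\ZZ)$-orbits of cells. Each of these strata is irreducible of dimension two; the strategy is to exhibit for each a collection of nef divisors $N_1,\dots,N_r$ on $\oa$ such that (a) the restrictions $N_i|_{\osi{K3+1}}$ are trivial (respectively, $N_i|_{\osi{C4}}$), so that the two-fold intersection numbers $\osi{K3+1}\cdot N_i\cdot N_j$ all vanish, and (b) any effective surface $S'$ on $\oa$ satisfying the same vanishings is supported set-theoretically on the given stratum. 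Irreducibility then forces any effective decomposition of the stratum class to be a positive multiple of itself.

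The main obstacle, as is typical for extremality results on higher-codimension cycles, is the construction of the test nef divisors and the verification that their common vanishing loci are as small as claimed. For the product surfaces this should follow fairly mechanically from Shepherd-Barron's description of the nef cone combined with the well understood genus~$2$ picture. For $\osi{K3+1}$ and $\osi{C4}$ the analysis is more delicate: one must use the explicit combinatorics of the two-dimensional cones of the perfect cone decomposition in genus~$3$, compute the restrictions of the standard divisor classes to the individual boundary strata, and check that the constructed nef divisors genuinely separate $\osi{K3+1}$ and $\osi{C4}$ from all the other two-dimensional strata of $\partial\oa$.
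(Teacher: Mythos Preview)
Your outline captures the general shape of the argument for $S_A$ and $S_F$: the paper does use Shepherd-Barron's result that the exceptional locus of $|mM|$ is exactly $N(\oa[1]\times\oa[2])$, together with the determination of $\oE(\oa[1]\times\oa[2])$, to conclude (via Blankers' lemma on extremal effective classes contracted by a morphism). However, two parts of your proposal have genuine gaps.

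\textbf{The surface $S_D$.} Your plan treats $S_D$ on the same footing as $S_A$ and $S_F$, forcing any decomposition into $N(\oa[1]\times\oa[2])$ via nef divisor tests. This does not work: from the intersection table one has $LM.S_D=\tfrac{1}{48}$ and $M^2.S_D=\tfrac{1}{24}$, so $S_D$ is \emph{not} contracted by $|mM|$, and nothing built from $L$ and $M$ alone forces a piece of a decomposition to lie in the decomposable locus. The paper explicitly notes this obstruction and takes a different route: it applies Blankers' lemma to the contraction $\pi:\oa\to\Sat$ (exceptional locus $\partial\oa$), passes to the finite cover $f:\ox[2]\to\partial\oa$, and then uses the fibration $p:\ox[2]\to\oa[2]$. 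The key extra nef classes are $p^*M_2$ and the universal theta divisor $T$ on $\ox[2]$, which are not restrictions of divisors on $\oa$; their simultaneous vanishing on $S_D$ forces every piece of a decomposition into $V=\ox[1]\times\ox[1]/S_2$, whose effective cone is computed separately. Your proposal contains no analogue of this step.

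\textbf{The strata $\osi{K3+1}$ and $\osi{C4}$.} You propose to find nef divisors whose vanishing pins an effective surface down set-theoretically to a single stratum. But $\Nef[1](\oa)$ is spanned by $L$ and $M$, and both strata satisfy $L^2=LM=0$; there is no way, with divisor classes alone, to separate $\osi{K3+1}$ from $\osi{C4}$. The paper instead shows only that each $S_i$ in a decomposition has $L^2.S_i=LM.S_i=0$, hence its class lies in the span of $[\osi{K3+1}]$ and $[\osi{C4}]$, and then proves by a geometric case analysis that the coefficients are nonnegative. This uses: Borel's fixed point theorem on the toric variety $\osi{1+1+1}$ to determine $\oE(\osi{1+1+1})$; the $\pi$-ampleness of $-D$ to force each $S_i$ into a fiber of $\pi$; and a detailed study of $\osi{K3}$ (including $\Nef[1](\osi{K3})$ and a $\CC^*$-action on the fibers of $\pi$ over $\ab[1]$) to handle the remaining cases. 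None of this is a ``vanishing locus of nef divisors'' argument in the sense you describe.
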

Here we recall that any toroidal compactification $\oa[g]$ is the union of certain strata which are indexed by the cones of the fan defining the toroidal compactification (modulo the action of the integral symplectic group).
For genus $3$ we have listed all equivalence classes of cones
in Table~\eqref{table:cones} below. We will denote the open stratum defined by a cone $C$  by $\si{C} \subset\oa$ and its closure by $\osi{C}$.

We stress that an extremal effective ray is {\em not} necessarily an extremal ray of the effective cone: it can a priori happen that an extremal effective ray~$S$ can be written as $S=S_1+S_2$, with at least one of the classes $S_i$ lying in $\oE[k](X)\setminus\Eff[k](X)$. In the current paper, as in~\cite{chcohighercodim}, we deal with extremal effective rays. See \Cref{rem:evidence} for the discussion of issues in trying to prove that the five surfaces above are in fact {\em extremal rays} of the pseudoeffective cone $\oE(\oa)$. We believe that proving our classes are in fact extremal rays would go a long way towards proving our main conjecture, that the classes above indeed  generate the effective cone of surfaces, and thus also the pseudoeffective cone:
\begin{conj}[Main conjecture]\label{conj:eff}
The pseudoeffective cone of surfaces on $\oa$ is the non-simplicial polyhedral cone given by
$$
 \oE(\oa)=\RR_{\ge 0}S_A+\RR_{\ge 0}S_F+\RR_{\ge 0}S_D+\RR_{\ge 0}\osi{C4}+\RR_{\ge 0}\osi{K3+1}\,.
$$
\end{conj}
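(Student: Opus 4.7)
The strategy is uniform across all five surfaces: for each putative extremal effective ray $S$, the plan is to exhibit a collection of codimension-two nef classes $\eta_1,\dots,\eta_r$ on $\oa$ such that $\eta_j\cdot S = 0$ for every $j$, and such that the only ray in $\Eff(\oa)\cap\bigcap_j\{T:\eta_j\cdot T = 0\}$ is the ray $\RR_{\ge 0}S$. Given such a collection, any decomposition $S = S_1+S_2$ with $S_1,S_2\in\Eff(\oa)$ forces $\eta_j\cdot S_i = 0$ for all $i$ and $j$, and hence each $S_i$ must be a non-negative multiple of $S$, as required for an extremal effective ray. Producing these nef classes and verifying that no other effective surface is pairing-orthogonal to all of them will occupy the bulk of the argument.

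For the three product surfaces $S_A$, $S_F$ and $S_D$, I would work through the finite normalization $N\colon \oa[1]\times\oa[2]\to\oa$ onto the decomposable locus. Since $N$ is finite, pullback and pushforward of cycles preserve effectivity, so the problem reduces to extremality of the preimages $S_{AA}$, $S_{DA}$, $S_{AF}$ and $S_{DD}$ on the product. On $\oa[1]\times\oa[2]$ the K\"unneth decomposition splits a surface class into contributions from bidegrees $(0,4)$, $(2,2)$ and $(4,0)$, and the relevant cones on each factor are explicitly generated: $\Eff(\oa[2])$ is generated by $\oa[1]\times\oa[1]$ and $D_2$, while $\oE[1](\oa[2])$ is generated by $C_A$ and $C_F$ by Shepherd-Barron. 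Taking as nef codimension-two classes appropriate products of the Shepherd-Barron nef divisor classes on the two factors, the extremality of each of $S_A$, $S_F$, $S_D$ will reduce to extremality of the corresponding generator of the appropriate cone on $\oa[2]$, together with a symmetry argument accounting for the automorphism groups identified by $N$.

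For the two boundary surfaces $\osi{K3+1}$ and $\osi{C4}$, the argument is more delicate and must use the combinatorics of the toroidal fan. Each of these surfaces is supported on a codimension-four closed boundary stratum of $\oa$ and is the closure of a single two-dimensional open stratum, namely $\si{K3+1}$ or $\si{C4}$. The plan here is to identify enough natural nef codimension-two classes, built from the Hodge class $L_3$, the Shepherd-Barron class $M_3=12L_3-D_3$, and their restrictions to (or intersections with) the various boundary divisors appearing in the stratification, to cut out the ray through $S$ inside $\Eff(\oa)$. The main obstacle is expected at this step: since $\Nef[2](\oa)$ is not known, the required nef classes must be constructed by hand and their nefness verified against a sufficient set of test curves. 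Moreover, ruling out ``mixed'' effective classes that could coincidentally be pairing-orthogonal to all chosen test classes, yet not be supported on the given stratum, is the technical core of the argument, and will likely require a case-by-case traversal of the finite list of strata of $\oa$ of dimension at most two.
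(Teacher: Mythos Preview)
The statement you are asked to address is the paper's Main \emph{Conjecture}, not its Main Theorem. The paper does not prove this statement; it explicitly leaves it open (see \Cref{rem:evidence} and the discussion in \Cref{sec:further}). So there is no proof in the paper to compare against, and your task cannot be to reproduce one.

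More importantly, your proposal does not attack the conjecture at all. What you outline is a strategy to show that each of the five surfaces is an \emph{extremal effective ray} of $\Eff(\oa)$. Even if this succeeds completely, it establishes only the paper's Main Theorem (\Cref{thm:main}), not the conjecture: knowing that five specific rays are extremal says nothing about whether there are further extremal rays, and hence nothing about whether these five \emph{generate} $\oE(\oa)$. The dual formulation makes the gap vivid: extremality of the five surfaces shows $\Nef(\oa)\subseteq\RR_{\ge 0}L^2+\RR_{\ge 0}LM+\RR_{\ge 0}M^2+\RR_{\ge 0}F_1+\RR_{\ge 0}F_2$, whereas the conjecture requires the reverse inclusion, i.e.\ the nefness of $F_1$ and $F_2$, which the paper does not know how to prove.

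Even as a route to the Main Theorem, your reduction for $S_A,S_F,S_D$ has a gap. You write that because $N\colon\oa[1]\times\oa[2]\to\oa$ is finite, extremality of the preimages on the product implies extremality of the images on $\oa$. This does not follow: $N$ is not surjective onto $\oa$, so a decomposition $N_*S=S_1+S_2$ in $\Eff(\oa)$ need not arise from a decomposition on $\oa[1]\times\oa[2]$; the $S_i$ could lie outside $N(\oa[1]\times\oa[2])$ entirely. The paper closes this gap for $S_A$ and $S_F$ via Shepherd-Barron's identification $Exc(M)=N(\oa[1]\times\oa[2])$ together with Blankers' contraction lemma (\Cref{lm:blankers}), and for $S_D$ by a separate argument through the finite map $f\colon\ox[2]\to\partial\oa$ and the morphism $p\colon\ox[2]\to\oa[2]$, using that $S_D$ is contracted by $\pi$. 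Your nef-class strategy does not supply a substitute for this step.
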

Taking the image of each of the five surfaces described above under the embedding $\oa\to\Perf$ given by taking a product with a fixed $(g-3)$-dimensional ppav $[B]\in\Perf[g-3]$ exhibits our five effective surfaces as embedded in $\Perf$. It is known that $H^4(\Perf)=\RR^{\oplus 4}$ for all $g\ge 3$, with generators $L_g^2,L_gM_g, M_g^2$, and $\beta_2$, where $L_g$ and $M_g$ are divisor classes discussed above, and $\beta_2$ is the class of the closure of the stratum $\osi{1+1}$. It seems natural to us to also conjecture that the five surfaces above generate $\oE(\Perf)$ for {\em any} $g\ge 3$, see \Cref{rem:higherg} for more discussion.

\smallskip
To lighten the notation, from now on we drop the subscript~$g$ of classes~$L_g$ and~$M_g$ when the context is clear.
This main conjecture is equivalent to the following dual conjecture for the cone $\Nef(\oa)$ of nef codimension two cycles.
\begin{conj}\label{conj:nef}
The cone of nef  codimension two classes on~$\oa$ is equal to
$$
  \Nef(\oa)=\RR_{\ge 0} L^2+\RR_{\ge 0} LM+\RR_{\ge 0} M^2+\RR_{\ge 0} F_1+\RR_{\ge 0} F_2\,,
$$
where the classes $F_1$ and $F_2$ are given by
$$
F_1=-72\,L^2+12\,LM+3\,M^2+\beta_2;\qquad F_2=72\,L^2-8\,LM+M^2-\beta_2\,.
$$
\end{conj}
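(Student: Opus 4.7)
Since $\oa$ satisfies Poincar\'e duality, Conjecture~\ref{conj:nef} is the translation of Conjecture~\ref{conj:eff} under the intersection pairing $\Eff(\oa) \times H^4(\oa,\RR) \to \RR$, where $H^4(\oa,\RR)$ has the basis $L^2, LM, M^2, \beta_2$. The plan therefore has two components: first, verify directly that the five classes $L^2, LM, M^2, F_1, F_2$ are indeed nef (this is the ``easy'' direction, independent of the Main Conjecture); second, show that assuming Conjecture~\ref{conj:eff}, no other extremal rays appear, by taking the dual cone.

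The first component is immediate for $L^2, LM, M^2$: by Shepherd-Barron's theorem both $L$ and $M$ are nef, so their pairwise products lie in $\Nef(\oa)$. For $F_1$ and $F_2$, the plan is to assemble the $5 \times 4$ matrix of intersection numbers $\langle S, B\rangle$ with $S$ ranging over the five surfaces of Theorem~\ref{thm:main} and $B \in \{L^2, LM, M^2, \beta_2\}$, and then verify that $\langle F_i, S\rangle \ge 0$ for $i=1,2$ and every $S$. For the product surfaces $S_A, S_F, S_D$ this reduces via the projection formula for the normalization $N: \oa[1] \times \oa[2] \to \oa$ to a computation on $\oa[1] \times \oa[2]$: the pullback $N^*L$ is the sum of the two Hodge classes, while $N^*D_3$ decomposes into the boundary $\oa[1] \times \partial\oa[2]$ together with the class of the decomposable locus in $\oa[2]$, so all intersection numbers reduce to classical genus one and two computations. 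For $\osi{C4}$ and $\osi{K3+1}$, I would use their explicit realization as closed toric strata of the perfect cone compactification, determined by the cones labeled C4 and K3+1, together with Mumford's formulae for the restrictions of tautological classes to toroidal strata.

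Once the intersection matrix is in place, the second component follows by elementary polyhedral combinatorics: under the Main Conjecture, $\Nef(\oa)$ is the intersection of the five half-spaces $\langle \cdot, S\rangle \ge 0$ in $H^4(\oa, \RR)$, and its extremal rays are obtained by saturating three of these inequalities at once. There are at most $\binom{5}{3}=10$ such candidates; each gives a one-dimensional solution subspace which should be verified to lie in the nef cone by checking the remaining two inequalities. The expected output is exactly the five rays $L^2, LM, M^2, F_1, F_2$, and the explicit coefficients in $F_1, F_2$ are forced by requiring that three prescribed intersection numbers vanish.

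The main obstacle is the intersection computations involving $\osi{C4}$ and $\osi{K3+1}$, and especially their pairings with $\beta_2$. These depend on the precise incidence structure of the cones C4 and K3+1 with the $1+1$ cone in the perfect cone fan of $\oa$, taken modulo the $\GL(3,\ZZ)$-action, and on self-intersection formulae for non-simplicial toric strata. The delicate combinatorics of these incidences is exactly what determines the integer coefficients $72, 12, 3, -8, 1$ appearing in $F_1$ and $F_2$, and verifying them would be the computational heart of the proof.
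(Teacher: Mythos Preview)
The statement you are trying to prove is a \emph{conjecture} in the paper, not a theorem; the paper does not prove it, and explicitly explains why it remains open. Your plan contains a genuine circularity that collapses exactly onto this open point.

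You label as the ``easy direction, independent of the Main Conjecture'' the verification that $L^2, LM, M^2, F_1, F_2$ are nef, and you propose to check nefness of $F_1, F_2$ by computing $\langle F_i, S\rangle$ for the five surfaces $S\in\{S_A,S_F,S_D,\osi{C4},\osi{K3+1}\}$. But nefness means non-negative intersection with \emph{every} effective (indeed pseudoeffective) surface, not merely with five chosen ones. Your check would establish nefness of $F_1,F_2$ only if those five surfaces already generate $\oE(\oa)$ --- which is precisely Conjecture~\ref{conj:eff}. The paper says this directly in Remark~\ref{rem:knownnef}: since $L^2,LM,M^2$ are nef as products of nef divisors, ``the conjecture amounts to conjecturing the nefness of the classes $F_1$ and $F_2$,'' and this is left open with only partial evidence collected in Remark~\ref{rem:evidence}.

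What the paper \emph{does} establish is the opposite containment $\Nef(\oa)\subseteq \RR_{\ge 0}L^2+\RR_{\ge 0}LM+\RR_{\ge 0}M^2+\RR_{\ge 0}F_1+\RR_{\ge 0}F_2$: this is the dual statement to the Main Theorem that the five surfaces are extremal effective, combined with the polyhedral computation of the dual cone from the intersection matrix of Proposition~\ref{prop:intersections}. Your ``second component'' is essentially this computation, and that part is fine. But you have the two directions reversed in difficulty: the containment $\supseteq$ is the open one, and your proposed method for it is circular.
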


The main theorem can be interpreted dually as the statement that the nef cone $\Nef(\oa)$ is {\em contained} in the cone generated by the five classes above.
\begin{rem}\label{rem:knownnef}
We recall that by~\cite{shepherdbarron} the cone of nef {\em divisors} on the perfect cone compactification, in any genus, is generated by the classes $L_g$ and $M_g$. Since the product of nef divisors is a nef class, it follows that we have the inclusion
$$
  \Nef(\oa)\supset\RR_{\ge 0}L^2+\RR_{\ge 0}LM+\RR_{\ge 0}M^2\,,
$$
and thus the conjecture amounts to conjecturing the nefness of the classes $F_1$ and $F_2$. See \Cref{rem:evidence} for a discussion on more evidence for this conjecture.
\end{rem}

\begin{rem}\label{rem:higherg}
The main result of~\cite{grhuto} is that the cohomology of $\Perf$ stabilizes in close to top degree, i.e.~that $H^{g(g+1)-k}(\Perf)$ is independent of $g$ for any $g>k$, and then the natural speculation on $\oE(\Perf)$ being generated by the same five surfaces in all genera $g\ge 3$ would be equivalent to stabilization of $\Nef[2](\Perf)\subset H_{g(g+1)-4}(\Perf)$. While it is further tempting to speculate that in general the nef cone in a fixed codimension stabilizes, i.e.~that $\Nef[k](\Perf)$ is independent of $g$ for any $g>k$, it is not clear to us how to make sense of this statement: recall that in general $\Perf$ is highly singular, and it is not clear to us how to define the nef classes (since the codimension of the singular locus is at least 10 by~\cite{DutourHulekSchuermann}, this may still work for $k<10$). To approach the higher dimensional case it is natural to take the images of products of extremal effective classes under the maps $N:\oa[i]\times\oa[g-i]\to\oa[g]$, and in some situations some our methods can be used to show that such products are also extremal effective, see \Cref{rem:higherdim}.
\end{rem}

We note that much of our proof hinges on the knowledge of the homology classes of our five surfaces and the resulting computation of intersection numbers.
These homology class computations need to be done with extreme care: in particular, one has to use the correct factors resulting from the stack structure.
Throughout the paper, we thus work very carefully with the precise automorphism groups, and whenever possible we compute the classes of our surfaces in more than one way. A further check on our numbers is that we recompute in a different way some of the intersection numbers found by van der Geer \cite{vdgeerchowa3}.

\subsection*{Outline of the paper}
Here we give a brief outline of the structure of the paper.
Throughout this paper we will always consider (co)homology with real coefficients, and will always work with  DM stacks/orbifolds unless it is explicitly stated otherwise.
In \Cref{sec:stratification} we review the geometry and notation for the toroidal stratification of the boundary $\partial\oa$. In \Cref{sec:ox1} we quickly rederive the classically well-known intersection theory on the compactified universal elliptic curve
$\ox[1]\to\oa[1]$, which is the simplest model for our further computations. In \Cref{sec:oa1oa2} we use the known geometry of~$\oa[2]$ to describe the intersection theory on the product $\oa[1]\times\oa[2]$, which allows us to show in \Cref{prop:effoa1oa2} that $\oE(\oa[1]\times\oa[2])$ is generated by the (homology classes) of the surfaces $S_{AA}$, $S_{AF}$, $S_{DD}$ and $S_{DA}$. In \Cref{sec:classes} we compute the homology classes of their images, the surfaces~$S_A$,~$S_F$ and~$S_D$ in~$\oa$.
This will be essential for proving extremality of the surfaces~$S_A$ and~$S_F$, but will not suffice to prove extremality of~$S_D$.
Thus in \Cref{sec:ox1ox1} we study the intersection theory and in \Cref{prop:effV} determine the cone of effective surfaces of another auxiliary fourfold of independent interest, $V:=\ox[1]\times\ox[1]/S_2$. The usefulness of $\oa[1]\times\oa[2]$ and~$V$ is that they both naturally admit finite maps to subvarieties of $\oa$, which are in fact the natural subvarieties from the point of view of slope of modular forms on~$\oa$, see \Cref{rem:oa1oa2v}. In fact, we will identify $S_A,S_F$ and $S_D$ as images of surfaces contained in $V$, which will eventually allow us to prove extremality of $S_D$ and reprove extremality of $S_F$.

In \Cref{sec:osiK3} we undertake a detailed study of the closure of the three-dimensional toroidal boundary stratum $\osi{K3}\subset\oa$. This will not only be used to prove that our five surfaces are extremal effective, but is also of independent geometric interest.
It will also allow us to reconfirm the computation of some classes achieved in \Cref{sec:classes}, in particular of the class of $S_D$ and of  $\osi{K3+1}$.
Then in \Cref{sec:extremality} we prove that all five surfaces are extremal effective.
In \Cref{sec:SF} we construct another surface $S_P$ contained in the stratum $\osi{1+1}$, and study its interesting geometry.
As a result, we compute its homology class, which turns out to be equal to the class of~$S_F$.

Finally, in \Cref{sec:further} we make various remarks on open problems and further directions.

\subsection*{Acknowledgements}
The first author would like to thank Leibniz Universit\"at Hannover for hospitality during multiple visits in 2016-2020, when most of the work for this paper was done. He would also like to thank the Alexander von Humboldt foundation for the Bessel award which has greatly supported this cooperation. The second author is grateful to Stony Brook University for hospitality during stays in 2016 and 2020. We thank the referee for their careful reading of the manuscript.

\section{The stratification of $\oa$}\label{sec:stratification}
We recall the structure and the geometric description of the stratification of $\oa$, as described in~\cite{huto1} and~\cite{grhuto} --- the notation from where we fully adopt. Recall that there is a contracting morphism $\pi:\oa\to\Sat$. The Satake stratification is stratified as $\Sat=\ab\sqcup\ab[2]\sqcup\ab[1]\sqcup\ab[0]$, and we denote by $\beta_i^o:=\pi^{-1}(\ab[3-i])$ the corresponding stratification of $\oa$ into disjoint strata, and by $\beta_i:=\pi^{-1}(\Sat[3-i])=\beta_i^o\sqcup\beta_{i+1}^o\sqcup\dots\sqcup\beta_3$ the nested closed strata. There are two special features of genus 3. One is that each $\beta_i$ is irreducible and $\dim_\CC\beta_i=6-i$. The other is that on level covers $\oa(n)$ of level $n \geq 3$ any boundary component $D_i(n)$
is naturally fibred over $\oa[2](n)$ identifying $D_i(n) \cong \ox[2](n) \to \oa[2](n)$ with a compactification of the  universal family $\ua[2](n) \to \ab[2](n)$
with semi-abelic surfaces, see \cite{tsushima}, \cite{huleknef}.

Furthermore, each stratum $\beta_i^o$ can be represented as a disjoint union $\beta_i^o=\sqcup\beta(\sigma)$ of toroidal strata, where $\sigma$ ranges over all the rank $i$ cones in the corresponding toroidal fan, and $\beta(\sigma)$ denotes the stratum corresponding to such a cone. The strata decompose as follows: $\beta_1^o=\beta(\sigma_1)$; $\beta_2^o=\beta(\sigma_{1+1})\sqcup\beta(\sigma_{K3})$ and $\beta_3^o=\beta(\si{1+1+1})\sqcup\beta(\si{K3+1})\sqcup\beta(\si{C4})\sqcup\beta(\si{K4-1})\sqcup\beta(\si{K4})$,
where the cones and the dimensions of the corresponding strata are given as follows:
\begin{equation}\label{table:cones}
\begin{array}{l|c|l}
{\rm cone}&{\rm stratum\ dimension}&{\rm cone\ generators}\\[0.5ex]
\hline&&\\[-2ex]
0&6&\\[0.5ex]
\si{1}&5&x_1^2\\[0.5ex]
\si{1+1}&4&x_1^2,x_2^2\\[0.5ex]
\si{K3}&3&x_1^2,x_2^2,(x_1-x_2)^2\\[0.5ex]
\si{1+1+1}&3&x_1^2,x_2^2,x_3^2\\[0.5ex]
\si{K3+1}&2&x_1^2,x_2^2,x_3^2,(x_1-x_2)^2\\[0.5ex]
\si{C4}&2&x_1^2,x_2^2,(x_1-x_3)^2,(x_2-x_3)^2\\[0.5ex]
\si{K4-1}&1&x_1^2,x_2^2,x_3^2,(x_1-x_2)^2,(x_1-x_3)^2\\[0.5ex]
\si{K4}&0&x_1^2,x_2^2,x_3^2,(x_1-x_2)^2,(x_1-x_3)^2,(x_2-x_3)^2
\end{array}
\end{equation}
To keep the notation in the current paper manageable, we will abuse notation and write $\sigma$ for the cone of a decomposition giving an admissible fan, and also for the associated stratum~$\beta(\sigma)\subset\oa$.

As we are interested in complete surfaces, we will focus on the closures of the strata, which we then denote by $\osi{}$.
We will also denote the homology classes of these compactified strata by the same symbol. The containment of a given stratum in the closure of another corresponds to the (reversed)  containment of the corresponding cones (up to a suitable change of basis). In genus 3 the closures of the relevant strata are as follows:
\begin{equation}\label{eq:strataclosures}
\begin{aligned}\osi{1+1}=\beta_2&=\si{1+1}\sqcup\si{K3}\sqcup\si{1+1+1}\sqcup\si{K3+1}\sqcup\si{C4}\sqcup\si{K4-1}\sqcup\si{K4}\\
&=\si{1+1} \sqcup(\osi{K3}\cup\osi{1+1+1})\\
\osi{K3}&=\si{K3}\sqcup\si{K3+1}\sqcup\si{K4-1}\sqcup\si{K4}=\si{K3}\sqcup\osi{K3+1}\\ \osi{1+1+1}=\beta_3&=\si{1+1+1}\sqcup\si{K3+1}\sqcup\si{C4}\sqcup\si{K4-1}\sqcup\si{K4}\\
&=\si{1+1+1}\sqcup (\osi{K3+1}\cup\osi{C4})\,.
\end{aligned}
\end{equation}
As we are working with surfaces, the one-dimensional stratum $\si{K4-1}$ and the point $\si{K4}$ will play essentially no role in our study.

\section{The intersection theory on $\ox[1]$}\label{sec:ox1}
In what follows we shall need results on the intersection theory of $\oa$ and some of its subvarieties. We recall that the Chow ring of $\oa$ was computed by van der Geer in~\cite{vdgeerchowa3} and that the Chow and homology rings of $\oa$ are equal by~\cite{huto1}. It will be important for us to be very systematic when it comes to the stacky interpretation  of various intersection numbers we compute.

To start, and to illustrate the method, we shall compute the intersection theory of the universal elliptic curve $\ox[1]\to\oa[1]$. This is in some sense classical although difficult to trace in the literature, see e.g.  \cite{barthhulek}. Our presentation here  also serves as an illustration how intersection numbers are defined and derived: we work on a suitable level structure and then divide by the order of the group of deck transformations (in the sense of a DM stack).
The cohomology group $H^2(\ox[1])$ is $2$-dimensional, spanned by the Hodge line bundle $L=L_1$ and the (closure of) the $0$-section, which we will denote by~$Z$ (see, for example,~\cite{ergrhu1}). The (trivialized along the $0$-section) theta divisor~$T$ can be defined as the divisor which has the same degree on the fibers as $Z$, and whose restriction to~$Z$ has degree $0$.

To compute the intersection numbers, we will work with the level covers $\ox[1](n)\to\oa[1](n)$, which are also known as Shioda modular surfaces (often also denoted by $S(n) \to X(n)$). For a construction of these surfaces and their properties
we refer the reader to
~\cite{shioda}, ~\cite{barthhulek} and~\cite[Sec.~I]{hukawebook}. Here we recall the relevant properties of these surfaces.
By definition $\ox[1](n)\to\oa[1](n)$ is an elliptic surface over the modular curve $\oa[1](n)$. It has singular fibers over the cusps of $\oa[1](n)$, which are of type $I_n$, i.e.~they are $n$-gons of smooth rational curves, the self-intersection of each of which is equal to $-2$.

The deck group of the cover $\ox[1](n)\to\ox[1]$ is
\begin{equation*}
G(n)= \SL(2, \ZZ/n\ZZ) \ltimes (\ZZ/n\ZZ \times \ZZ/n\ZZ)\,.
\end{equation*}
Here $(\ZZ/n\ZZ \times \ZZ/n\ZZ)$ acts by addition of $n$-torsion points on the fibers, $-1 \in \SL(2, \ZZ/n\ZZ)$ acts as the Kummer involution, and the projective group
$\PSL(2,\ZZ/n\ZZ)$ acts effectively on $\oa[1](n)$, with the quotient~$\oa[1]$ (as a variety).

The Mordell-Weil group of sections is (non-canonically) isomorphic to $\ZZ/n\ZZ \times ZZ/n\ZZ $. On each smooth fiber these sections cut out the group of $n$-torsion points. This is also true for the $n$-gons, since the smooth locus of such an
$n$-gon has a group structure isomorphic to $\mu_n \times \ZZ/n\ZZ$ where $\mu_n$ denotes the group of $n$-th roots of unity. In particular, every component of a singular fiber intersects $n$ of these sections (in smooth points of the $n$-gon).
We denote the sections by $L_{ij}, (i,j) \in  \ZZ/n\ZZ \times \ZZ/n\ZZ$ and write
\begin{equation}\label{equ:MWgroup}
Z_n=\sum_{(i,j) \in \ZZ/n\ZZ \times \ZZ/n\ZZ} L_{ij}\,.
\end{equation}
The quotient of the union of these sections under $G(n)$ is the $0$-section $Z_1=Z$, where the lower index indicates the level cover on which we work. We finally note
that the group $\ZZ/n\ZZ \times \ZZ/n\ZZ$ acts on the $n$-gons as follows:  there is a subgroup $\ZZ/n\ZZ$ which permutes the components of the $n$-gon cyclically and another subgroup  $\ZZ/n\ZZ$ which acts on each component by
multiplication by a primitive $n$-th root of unity. This follows easily from the construction of the Shioda modular surfaces, see e.g.~\cite{barthhulek},\cite[Sec.~I]{hukawebook} .

The main numerical data which we will need are the following, see also e.g.~\cite[Sec.~I]{barthhulek}.
Let~$t(n)$ be the number of cusps of $\oa[1](n)$. Then
\begin{equation}\label{equ:numberfibers}
t(n)= \frac12 n^2 \prod_{p|n}(1 -{\frac{1}{p^2}})\,.
\end{equation}
The order of the Galois group is given by
\begin{equation}\label{equ:ordersl2z}
|\SL(2, \ZZ/n\ZZ)|=n^3 \prod_{p|n}(1 - \frac{1}{p^2})=2n\,t(n)\,,
\end{equation}
where the products are taken over all primes $p$ dividing $n$, and hence
\begin{equation}\label{equ:orderGaloisgroup}
|G(n)|=n^2\,|\SL(2, \ZZ/n\ZZ) |=2n^3\,t(n)\,.
\end{equation}
We finally note that it  follows from the canonical bundle formula and adjunction, see~\cite[p. 80]{barthhulek} that
\begin{equation}\label{equ:selfintersectionL}
L_{ij}^2= -\frac{n}{12}\,t(n)\,.
\end{equation}

Finally we recall the Hodge line bundle $L$. This is a $\QQ$-line bundle on $\oa[1]$.  Its pullback $L_n$ to a level~$n$ cover is an honest line bundle
for all $n\geq 3$.

We are now ready to start the calculation of intersection numbers on~$\ox[1]$ and related varieties.
\begin{prop}\label{prop:intersectionX1}
The classes $L,T,$ and $Z$ intersect as follows on $\ox[1]$:
\begin{equation}\label{table:intersectionX(1)}
  \begin{array}{r|ccc}
  &L&T&Z\\
  \hline\\[-2ex]
  L&0&\frac{1}{24}&\frac{1}{24}\\[0.5ex]
  T&\frac{1}{24}&\frac{1}{24}&0\\[0.5ex]
  Z&\frac{1}{24}&0&-\frac{1}{24}\,.
  \end{array}
\end{equation}
Moreover $T=Z+L$.
\end{prop}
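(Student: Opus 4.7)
The plan is to first compute the three independent intersection numbers $L^2$, $L\cdot Z$, and $Z^2$ on $\ox[1]$, then establish the relation $T=Z+L$, and derive the remaining entries by bilinearity. Following the method sketched at the start of the section, each intersection on $\ox[1]$ is obtained by pulling everything back to the level cover $\ox[1](n)\to\ox[1]$ for $n\ge 3$, computing the integer intersection there, and dividing by the stack degree $|G(n)|=2n^3 t(n)$ from \eqref{equ:orderGaloisgroup}.

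The vanishing $L^2=0$ is immediate since $L_n$ is pulled back from the one-dimensional base $\oa[1](n)$. For $L\cdot Z$, the pullback of $Z$ is $Z_n=\sum_{(i,j)} L_{ij}$ from \eqref{equ:MWgroup}, and each $L_{ij}$ is a section, so $L_n\cdot L_{ij}=\deg L_n$ on $\oa[1](n)$. I would compute $\deg L_n$ via Noether's formula: the semistable elliptic surface $\ox[1](n)\to\oa[1](n)$ has $K_{\ox[1](n)}^2=0$ (since $K$ is pulled back from the base), while its topological Euler number is $e(\ox[1](n))=n\,t(n)$ because there are $t(n)$ singular fibers of type $I_n$, each contributing Euler number $n$. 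Hence $\chi(\calO_{\ox[1](n)})=n\,t(n)/12$, and the canonical bundle formula identifies this with $\deg L_n$. Thus $L_n\cdot Z_n=n^2\cdot n\,t(n)/12=n^3 t(n)/12$, and dividing by $|G(n)|$ yields $L\cdot Z=1/24$. For $Z^2$, the sections $L_{ij}$ are pairwise disjoint (they pass through distinct $n$-torsion points on smooth fibers and through distinct smooth points on each $I_n$ singular fiber), so $Z_n^2=\sum_{(i,j)} L_{ij}^2=n^2\cdot(-n\,t(n)/12)$ by \eqref{equ:selfintersectionL}, and dividing by $|G(n)|$ gives $Z^2=-1/24$.

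To check $T=Z+L$, I verify that $Z+L$ satisfies the two properties defining $T$. The fiberwise degree is $1$, since $Z|_E=[0]$ and $L$ restricts trivially to any fiber. For the restriction to $Z$, I identify $\calO(Z)|_Z$ with the normal bundle $N_{Z/\ox[1]}$, which equals the dual of the Hodge bundle $L$ by the standard description of the relative tangent bundle at the zero section. Hence $(Z+L)|_Z$ has degree $Z^2+L\cdot Z=-1/24+1/24=0$, matching the definition. Uniqueness of $T$ with these two properties (any class of fiberwise degree $0$ is pulled back from $\oa[1]$, and its restriction to $Z\cong\oa[1]$ is itself, hence vanishes only if it is zero) forces $T=Z+L$. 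The remaining intersections in the table then follow by bilinearity: $L\cdot T=1/24$, $Z\cdot T=0$, and $T^2=1/24$.

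The principal obstacle in this argument is not the geometry but the stacky accounting, most conspicuously the factor $|G(n)|=2n^3 t(n)$. This factor decomposes as $n^2\cdot n\,t(n)\cdot 2$, where the $n^2$ is translation by $n$-torsion on fibers (and matches the number of sections in $Z_n$), the $n\,t(n)=|\PSL(2,\ZZ/n\ZZ)|$ is the deck group on the base, and the remaining factor $2$ is the generic $\pm 1$ automorphism of $\oa[1]$. Getting any of these factors wrong would change the intersection numbers by rational scalars, and since the subsequent extremality arguments in the paper rely on precisely these numerics, this calibration must be carried out with care.
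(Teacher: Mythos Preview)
Your proof is correct and arrives at the same numbers, but it diverges from the paper in two places worth noting. For $L\cdot Z$, the paper goes via modular forms: it uses the weight $12$ cusp form $\Delta$ to identify $12L$ with the fiber class $F$, then computes $Z\cdot F=\tfrac12$ directly by counting how many sections meet a fiber. You instead compute $\deg L_n$ on the base by combining the Euler number $e(\ox[1](n))=n\,t(n)$ with Noether's formula and the Leray identification $\chi(\calO)=\deg L_n$. Both are standard facts about elliptic modular surfaces, and the outcome $\deg L_n = n\,t(n)/12$ is of course the same; your route avoids invoking $\Delta$ at the cost of appealing to the canonical bundle formula and Leray. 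For $T=Z+L$, the paper writes $T=aZ+bL$ and solves the two linear conditions $T\cdot Z=0$, $T\cdot F=\tfrac12$ for $a=b=1$; you instead verify the defining properties of $T$ directly on the candidate $Z+L$, using the normal bundle identification $N_{Z/\ox[1]}\cong L^{-1}$ to see $(Z+L)|_Z$ is trivial. This is a cleaner conceptual check and exactly equivalent. Your remark that the sections $L_{ij}$ are pairwise disjoint makes explicit something the paper uses silently when writing $Z_n^2=\sum L_{ij}^2$.
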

\begin{proof}
We start with the computation of $Z^2$. By~\eqref{equ:MWgroup}, \eqref{equ:ordersl2z}, \eqref{equ:orderGaloisgroup}, and \eqref{equ:selfintersectionL} we find that
\begin{equation}\label{equ:selfintersetionZ}
Z^2=\frac{1}{|G(n)|}\left(\sum_{ij}L_{ij}\right)^2= - \frac{1}{24}\,.
\end{equation}

Next we discuss the class of the fiber of $\ox[1]\to\oa[1]$. For this let $F^0_n$ be a single fiber of $\ox[1](n)\to\oa[1](n)$. Let
\begin{equation}
F_n= \sum_{\bar{g} \in \PSL(2,\ZZ/n\ZZ)} \bar{g}(F^0_n)\,.
\end{equation}
Note that $G(n)$ acts on $F_n$ where $-1 \in \SL(2,\ZZ/n\ZZ)$ acts as the Kummer involution and $\ZZ/n\ZZ \times\ZZ /n\ZZ$ acts by addition of $n$-torsion points. We then find
\begin{equation}\label{equ:degreeZonF}
ZF=\frac{1}{|G(n)|} (Z_n.F_n)= \frac{1}{|G(n)|} |\PSL(2,\ZZ/n\ZZ)|\, n^2 =\frac12\,.
\end{equation}
Geometrically, the fraction $1/2$  is explained by the Kummer involution.

There is a unique cusp form
$\Delta \in H^0(\oa[1],12L)$ of weight $12$ whose pullback to  $\ox[1]$ vanishes to order~$1$ on the fiber over the cusp. The map $\oa[1](n) \to \oa[1](1)$ is branched of order $n$ along the cusps. Hence
we find that on $\ox[1](n)$ the equality
\begin{equation}
12L_n= n F_{\infty,n}
\end{equation}
holds, where $F_{\infty,n} $ is the sum of all fibers over all the cusps of $\oa[1](n)$. As there are $t(n)$ of these by~\eqref{equ:numberfibers}, we find that
\begin{equation}
12L_n=F_n
\end{equation}
and hence
\begin{equation}
12L=F\,.
\end{equation}
Together with~\eqref{equ:degreeZonF} this implies
\begin{equation} \label{equ:degreeZF}
LZ= \frac{1}{24}\,.
\end{equation}
In order to determine the class of $T$ we write $T=aZ+bL$. From the defining property $TZ=0$ and~\eqref{equ:selfintersetionZ},~\eqref{equ:degreeZF} we obtain
\begin{equation}
0=TZ=(aZ + bL)Z= - \frac{1}{24} a + \frac{1}{24} b
\end{equation}
and hence $a=b$. Finally since $Z.F=1/2$ by~\eqref{equ:degreeZF} and since $F^2=0$ as distinct fibers are disjoint, we find
\begin{equation}
\frac12 = ZF=TF=(aZ + bF)F= \frac12 a\,,
\end{equation}
and thus $a=b=1$. This shows
\begin{equation}
T=Z+L\,.
\end{equation}
Finally, a straightforward calculation using~\eqref{equ:selfintersetionZ} and~\eqref{equ:degreeZF} gives
\begin{equation}
T^2=(Z+L)^2= \frac{1}{24}
\end{equation}
and this concludes the proof.
\end{proof}

\section{The intersection theory and extremal surfaces on $\oa[1]\times\oa[2]$}\label{sec:oa1oa2}
In this section we compute the intersection theory on the product $\oa[1]\times\oa[2]$ and determine the cone
$\oE(\oa[1]\times\oa[2])$.

Denoting in this section by $L_2, D_2,$ and $M_2=12L_2-D_2$ the classes of the corresponding line bundles on~$\oa[2]$, we recall the classically known geometry in genus 2, and the intersection theory on~$\oa[2]$ as described by van der Geer \cite[Thm.~2]{vdgeerchowa3}. Indeed, $H^2(\oa[2])=\RR L_2\oplus \RR D_2$, and these classes generate the Chow (and homology) ring of $\oa[2]$ with the only relations being $L_2^2D_2=0$ and $(12L_2-D_2)(10L_2-D_2)=0$, which in terms of $M_2$ can be written as
$$
 M_2^2=2 L_2 M_2\quad{\rm and}\quad 12 L_2^3=L_2^2M_2.
$$
In particular, we see that $H^4(\oa[2])=\RR L_2^2\oplus\RR M_2^2$.
Since $\dim_\CC\oa[2]=3$, surfaces on $\oa[2]$ are divisors, and the effective cone is well understood:
$$
  \oE(\oa[2])=\RR_{\ge 0}\partial \oa[2]+\RR_{\ge 0} N(\oa[1]\times\oa[1]).
$$
Here $N: \oa[1] \times \oa[1] \to \oa[2]$ is the natural $2:1$ map onto the decomposable locus given by interchanging the factors.
The fundamental class of a surface on $\oa[2]$ can, by Poincar\'e duality,  be thought of as a class in $H_4(\oa[2])=H^4(\oa[2])^*=H^2(\oa[2])$ and the classes of the above surfaces are then identified as $[\partial\oa[2]]=D_2=12L_2-M_2$ while  we have $[N(\oa[1]\times\oa[1])]=5L_2-\frac12 D_2=\frac12(M_2-L_2)$ (see, for example, \cite[Lemma 2.2]{vdgeerchowa3}).
To understand this formula we recall that there is a weight 10 form which vanishes on a level-$n$ cover with multiplicity 1  on the locus of decomposable surfaces
and  with multiplicity $n$ on the boundary. The factor $1/2$ then comes from the fact that there is an involution interchanging the two factors when there is no level structure.

The cone of nef divisors is $\Nef[1](\oa[2])=\RR_{\ge 0}L_2+\RR_{\ge 0}M_2$, which is the special case of Shepherd-Barron's result in genus 2. Thus the cone of effective {\em curves} $\oE[1](\oa[2])$, which by definition is dual to $\Nef[1](\oa[2])$, is spanned by the two effective curve classes that have zero pairings with $L_2$ and $M_2$, respectively. Indeed, we have used these curves before:
firstly, there is the curve $C_F$ that is a generic fiber of $\partial\oa[2]\to\oa[1]$. Its class is $[C_F]=12L_2D_2$. For this we recall that geometrically $D_2=\partial\oa[2]$, and that $L_2|_{\partial\oa[2]}$ is the pullback of $L_1$ and that the class of a point on $\oa[1]$ is $12L_1$. Secondly, we have the curve $C_A=[B]\times\oa[1]$, whose class is $[C_A]=12L_2(10L_2-D_2)$.
The reason that here we have $10L_2-D_2$, which is twice the class of $N(\oa[1]\times \oa[1])$, is that the involution interchanging the two factors does not leave $[B] \times \oa[1]$ fixed.
As a check one immediately computes that $L_2C_A= 1/24$, which is the degree of the Hodge line bundle on $\oa[1]$.

\smallskip
Now recall that $H^2(\oa[1])=\RR L_1$; we thus compute the cohomology of the product
$$
  H^4(\oa[1]\times\oa[2])=\RR L_1L_2\oplus\RR L_1M_2\oplus\RR L_2^2\oplus\RR M_2^2,
$$
where by abuse of notation we use $L_1,L_2,M_2$ to denote the pullbacks of the corresponding classes on one of the factors to the product $\oa[1]\times\oa[2]$.

Thinking dually, to construct effective surfaces on $\oa[1]\times\oa[2]$, one can take an effective surface in $\oa[2]$, and take its product with a fixed point $[B]\in\oa[1]$, or one can take an effective curve in $\oa[2]$, and take its product with $\oa[1]$ (which is the only effective curve on $\oa[1]$). In this way, starting with the generators of $\oE(\oa[2])$ and $\oE[1](\oa[2])$, we construct the following four natural effective surfaces on~$\oa[1]\times\oa[2]$:
$$
\begin{array}{ll}
  S_{DD}:=[B]\times\partial\oa[2];\qquad& S_{DA}:=[B]\times N(\oa[1]\times\oa[1])\\
  S_{AF}:=\oa[1]\times C_F;\qquad & S_{AA}:=\oa[1]\times C_A.
\end{array}
$$
We claim that these four surfaces generate the pseudoeffective cone.
\begin{prop}\label{prop:effoa1oa2}
The cone of nef codimension two classes on $\oa[1]\times\oa[2]$ is generated by the four generators identified above:
$$
 \Nef(\oa[1]\times\oa[2])=\RR_{\ge 0} L_1L_2+ \RR_{\ge 0} L_1M_2+ \RR_{\ge 0}L_2^2+\RR_{\ge 0}M_2^2,
$$
while the cones of pseudoeffective and effective surfaces on $\oa[1]\times\oa[2]$ coincide, and are equal to
$$
 \oE(\oa[1]\times\oa[2])=\RR_{\ge 0} S_{DD}+ \RR_{\ge 0} S_{DA}+ \RR_{\ge 0}S_{AF}+\RR_{\ge 0}S_{AA}.
$$
\end{prop}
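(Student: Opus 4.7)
My plan is to exploit the Künneth decomposition
\[
H^4(\oa[1]\times\oa[2]) \;=\; \bigl(H^0(\oa[1])\otimes H^4(\oa[2])\bigr)\oplus\bigl(H^2(\oa[1])\otimes H^2(\oa[2])\bigr),
\]
whose two summands have bases $\{L_2^2,M_2^2\}$ and $\{L_1L_2,L_1M_2\}$ respectively (since $\dim\oa[1]=1$ forces $H^4(\oa[1])=0$). Any class decomposes uniquely as $[S]=1\otimes\delta+L_1\otimes\epsilon$ with $\delta\in H^4(\oa[2])$ and $\epsilon\in H^2(\oa[2])$; using $\int_{\oa[1]}L_1=1/24$ together with the projection formula, one recovers $\delta=\iota^*[S]$ via restriction to a generic fiber $\iota\colon\{B\}\times\oa[2]\hookrightarrow\oa[1]\times\oa[2]$, and $\epsilon=24\,\pi_{2*}[S]$. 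The four candidate surfaces split across the two pieces: $[S_{AA}]=1\otimes[C_A]$ and $[S_{AF}]=1\otimes[C_F]$ lie in the first summand, while $[S_{DD}]=12L_1\otimes[\partial\oa[2]]$ and $[S_{DA}]=12L_1\otimes[N(\oa[1]\times\oa[1])]$ lie in the second, the factor $12$ coming from $[B]=12L_1$ (which reflects that a generic elliptic curve has a $\pm 1$-automorphism; cf.~\Cref{sec:ox1}).

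The heart of the proof is a positivity statement: for any irreducible effective surface $S\subset\oa[1]\times\oa[2]$, both $\delta$ and $\epsilon$ are effective classes on $\oa[2]$. Since $\oa[1]$ is one-dimensional, $\pi_1|_S$ is either constant (vertical case: $\iota^{-1}(S)=\emptyset$ for generic $B$, hence $\delta=0$) or surjective (horizontal case: $\iota^{-1}(S)$ is a one-dimensional effective cycle on $\{B\}\times\oa[2]\cong\oa[2]$, so its class $\delta$ is effective). In parallel, proper pushforward preserves effectiveness, so $\pi_{2*}[S]$ is always an effective divisor class on $\oa[2]$.

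Applying the classical description of the genus-two effective cones recalled at the start of this section --- $\oE(\oa[2])=\RR_{\ge 0}[\partial\oa[2]]+\RR_{\ge 0}[N(\oa[1]\times\oa[1])]$ and, dually to Shepherd-Barron's $\Nef[1](\oa[2])=\RR_{\ge 0}L_2+\RR_{\ge 0}M_2$, $\oE[1](\oa[2])=\RR_{\ge 0}[C_A]+\RR_{\ge 0}[C_F]$ --- we may write $\delta=a[C_A]+b[C_F]$ and $\pi_{2*}[S]=c[\partial\oa[2]]+d[N(\oa[1]\times\oa[1])]$ with $a,b,c,d\ge 0$. Substituting into $[S]=1\otimes\delta+L_1\otimes\epsilon$ gives
\[
[S]\;=\;a\,[S_{AA}]+b\,[S_{AF}]+2c\,[S_{DD}]+2d\,[S_{DA}],
\]
a non-negative combination. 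Summing over irreducible components of a general effective cycle shows that $\Eff(\oa[1]\times\oa[2])$ is contained in the polyhedral cone generated by the four $S_i$. Since that cone is closed and contains each $S_i$, it coincides with both $\Eff$ and $\oE$.

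The nef statement then follows by duality. Each of $L_1L_2,L_1M_2,L_2^2,M_2^2$ is a product of two Shepherd-Barron nef divisor classes and is therefore a nef codimension-two class, giving the inclusion $\supset$. The pairing $H^4\times H^4\to\RR$ respects the Künneth decomposition and pairs the two blocks perfectly against one another, and a direct computation of the $4\times 4$ intersection matrix of the four proposed nef generators against the four $S_i$ --- using the relations $L_2^2M_2=12L_2^3$, $M_2^2=2L_2M_2$, and $L_2^3=1/2880$ on $\oa[2]$ --- produces a diagonal matrix with positive entries, identifying the nef cone with the claimed four-ray cone. The principal technical subtlety I anticipate throughout is careful stack-theoretic bookkeeping of the automorphism factors, most notably in the identities $[B]=12L_1$ and $\int_{\oa[1]}L_1=1/24$, and in the correct expression of $[N(\oa[1]\times\oa[1])]$ in the $\{L_2,M_2\}$ basis.
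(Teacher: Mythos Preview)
Your argument is correct and takes a genuinely different route from the paper. The paper's proof is purely numerical: it computes the $4\times 4$ intersection matrix of the four surfaces against the four nef classes $L_1L_2,L_1M_2,L_2^2,M_2^2$ (this is \Cref{lm:intersectionsoa1oa2}), observes that the matrix is diagonal with positive entries, and concludes immediately that any pseudoeffective class, expressed in the basis $\{S_{AA},S_{AF},S_{DA},S_{DD}\}$, must have all coefficients non-negative because each nef class detects exactly one coefficient. Your approach instead exploits the K\"unneth splitting: restriction to a generic fiber $\{B\}\times\oa[2]$ and pushforward along $\pi_2$ recover the two components of $[S]$ as honestly effective classes on $\oa[2]$, and you then invoke the already-known genus~$2$ cones $\oE[1](\oa[2])$ and $\oE(\oa[2])$. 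This is a cleaner explanation of \emph{why} the answer looks the way it does (it is literally the product of the two known cones on $\oa[2]$, placed in the two K\"unneth pieces), and it would generalize to $C\times X$ for any curve $C$; the cost is that it only handles effective classes directly, so you must pass to the closure at the end, and it presupposes the description of the curve and divisor cones on $\oa[2]$. The paper's argument, by contrast, handles pseudoeffective classes in one stroke and needs only the nefness of $L_i,M_i$ together with the diagonal pairing --- no case analysis on $\dim\pi_1(S)$. For the nef cone both arguments converge on the same diagonal-matrix observation.
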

The Proposition will follow immediately from the following lemma computing the intersection numbers of the claimed generators of the nef and pseudoeffective cones, since this matrix turns out to be diagonal.
\begin{lm}\label{lm:intersectionsoa1oa2}
The intersection numbers of the nef generators of $H^4(\oa[2])$ with the four effective surfaces on $\oa[1]\times\oa[2]$ constructed above are as follows:
$$
  \begin{array}{r|cccc}
  &L_1L_2&L_1M_2&L_2^2&M_2^2\\
  \hline\\[-2ex]
  S_{AA}&\frac{1}{1152}&0&0&0\\[0.5ex]
  S_{AF}&0&\frac{1}{96}&0&0\\[0.5ex]
  S_{DA}&0&0&\frac{1}{1152}&0\\[0.5ex]
  S_{DD}&0&0&0&\frac{1}{24}\,.
  \end{array}
$$
\end{lm}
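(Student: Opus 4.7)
The proof is a routine computation using the K\"unneth decomposition $H^\ast(\oa[1] \times \oa[2], \RR) = H^\ast(\oa[1]) \otimes H^\ast(\oa[2])$. Since each of the four surfaces is an honest product, its fundamental class splits:
\[
[S_{AA}] = 1 \otimes [C_A], \quad [S_{AF}] = 1 \otimes [C_F], \quad [S_{DA}] = [B] \otimes [N(\oa[1] \times \oa[1])], \quad [S_{DD}] = [B] \otimes D_2,
\]
where the curve and divisor classes on $\oa[2]$ have been identified above using van der Geer's Chow ring of $\oa[2]$, and the class of a fixed stacky point on $\oa[1]$ is $[B] = 12 L_1$, with $\int_{\oa[1]}[B] = 1/2$.

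The twelve zero entries of the table split into three groups according to the mechanism of vanishing. Four of them---those in which $[S_{DA}]$ or $[S_{DD}]$ is paired with $L_1 L_2$ or $L_1 M_2$---vanish because the $\oa[1]$-component of the K\"unneth pairing contains $L_1 \cdot L_1 = L_1^2 \in H^4(\oa[1]) = 0$ (recall $\dim_{\CC}\oa[1]=1$). Four more---those in which $[S_{AA}]$ or $[S_{AF}]$ is paired with $L_2^2$ or $M_2^2$---vanish because the K\"unneth pairing lands in $H^0(\oa[1]) \otimes H^8(\oa[2])$ and $H^8(\oa[2])=0$. The remaining four---$S_{DD}\cdot L_2^2$, $S_{DA} \cdot M_2^2$, $S_{AA}\cdot L_1 M_2$, and $S_{AF}\cdot L_1 L_2$---reduce to top-degree vanishings on $\oa[2]$: the relation $L_2^2 D_2 = 0$ handles the first, and combining it with $L_2^2 M_2 = 12 L_2^3$ and $M_2^2 = 2 L_2 M_2$ (so in particular $L_2 M_2^2 = 24 L_2^3$ and $D_2 M_2^2 = 240 L_2^3$) yields $[C_A] \cdot M_2 = 0$, $[C_F] \cdot L_2 = 0$, and $[N(\oa[1]\times\oa[1])] \cdot M_2^2 = 0$ by direct expansion.

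The four nonzero diagonal entries reduce under the K\"unneth splitting to products of one-factor pairings, e.g.
\[
S_{AA} \cdot L_1 L_2 \;=\; \Bigl(\int_{\oa[1]} L_1\Bigr) \cdot \bigl(L_2 \cdot [C_A]\bigr)_{\oa[2]},
\]
and analogously for $S_{AF}\cdot L_1 M_2$, $S_{DA}\cdot L_2^2$, and $S_{DD}\cdot M_2^2$. Each $\oa[2]$-factor is a rational multiple of $L_2^3$, so plugging in $\int_{\oa[1]} L_1 = 1/24$, $\int_{\oa[1]}[B] = 1/2$, and the known value of $L_2^3$ produces the four diagonal entries.

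The main technical point---and the place I would take most care---is the correct stacky normalisation of the cycle classes $[C_A]$, $[C_F]$, $[N(\oa[1]\times\oa[1])]$, and $[B]$. The generic points of $[C_A]$ and of $[N(\oa[1]\times\oa[1])]$ carry an enlarged automorphism group $\ZZ/2 \times \ZZ/2$ (compared with the generic $\ZZ/2$ of $\oa[2]$), and the prefactor $\tfrac12$ in $[N(\oa[1]\times\oa[1])] = \tfrac12(10 L_2 - D_2)$ records this stacky gerbe structure; propagating these factors consistently through each of the four diagonal intersections is what produces the precise rational numbers in the table.
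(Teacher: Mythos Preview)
Your approach---K\"unneth plus the known intersection numbers on each factor---is exactly the paper's approach: it too writes each $[S_{**}]$ as a product of classes from $\oa[1]$ and $\oa[2]$ and then evaluates using van der Geer's numbers on $\oa[2]$ together with $\int_{\oa[1]}L_1=\tfrac{1}{24}$.

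The one substantive discrepancy is in the stacky normalisation, and it is not where you look for it. You worry about the extra automorphisms carried by $C_A$ and $N(\oa[1]\times\oa[1])$ inside $\oa[2]$; those are already absorbed into the input formulas $[C_A]=12L_2(10L_2-D_2)$ and $[N(\oa[1]\times\oa[1])]=\tfrac12(10L_2-D_2)$ that you take from the paper. The factor the paper actually introduces concerns the two surfaces of the form $\oa[1]\times(\text{curve})$: it takes
\[
[S_{AA}]=12L_2(5L_2-\tfrac12 D_2)=\tfrac12\, p_2^*[C_A],\qquad [S_{AF}]=6L_2D_2=\tfrac12\, p_2^*[C_F],
\]
attributing the extra $\tfrac12$ to the involution $-\operatorname{id}$ on the \emph{first} factor $\oa[1]$. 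With your naive $[S_{AF}]=1\otimes[C_F]$, using $\int_{\oa[1]}L_1=\tfrac{1}{24}$ and $M_2\cdot[C_F]=12\,M_2L_2D_2=\tfrac12$, you would obtain $S_{AF}\cdot L_1M_2=\tfrac{1}{48}$ instead of $\tfrac{1}{96}$, and likewise $S_{AA}\cdot L_1L_2=\tfrac{1}{576}$ instead of $\tfrac{1}{1152}$. For $S_{DD}$ and $S_{DA}$ no such correction arises---the paper's $[S_{DD}]=12L_1D_2$ and $[S_{DA}]=12L_1(5L_2-\tfrac12 D_2)$ are literally your $[B]\otimes D_2$ and $[B]\otimes[N(\oa[1]\times\oa[1])]$---so those two rows come out correctly by your method.
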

\begin{proof}
We first compute the classes of the four surfaces in homology. The classes of the curves $C_F$ and $C_A$ were given above, and we recall that the class of a point $[B]\in\oa[1]$ is equal to~$12L_1$, and has degree $1/2$ due to stackiness. We thus obtain
$$
 [S_{DD}]=12L_1D_2;\quad [S_{DA}]=12L_1(5L_2-D_2/2).
$$
To compute the classes of the other two surfaces, we note that these are simply pullbacks of classes from $\oa[2]$, except for the stacky issue: there is an extra factor  $1/2$ that comes from the involution $-\operatorname{id}$ on the first factor $\oa[1]$. We thus obtain that these classes are equal to
$$
 [S_{AF}]=6L_2D_2;\quad [S_{AA}]=12L_2(5L_2-D_2/2).
$$
At this point we note that the class of the surface $S_{AF}$ agrees with the formula
given in \cite[Sec. 2]{vdgeerchowa3}
for the class of the cycle $\beta_2$ on $\oa[2]$, which we denote by  $\beta_2^{(2)}$. This is indeed correct as $\beta_2^{(2)}$ is represented by the curve~$C_F$ understood as the fiber of the map $\partial \oa[2] \to \oa[1]$ over the cusp, which comes with a factor $1/2$ due to the involution $-\operatorname{id}$ on $\oa[1]$.

The intersection theory on~$\oa[2]$ is given in \cite[Sec.~2]{vdgeerchowa3}: the top intersection numbers are
$$
 D_2^3=-\frac{11}{12};\quad L_2D_2^2=-\frac{1}{24};\quad L_2^2D_2=0;\quad L_2^3=\frac{1}{2880}\,,
$$
from which we also obtain the intersection numbers
$$
M_2^3=\frac{1}{60}; \quad M_2L_2D_2=\frac{1}{24};\quad M_2^2D_2=\frac{1}{12}.
$$
Now computing the intersection numbers is a matter of substituting the formulas for the classes of the effective surfaces and using these intersection numbers.
\end{proof}
\begin{proof}[Proof of \Cref{prop:effoa1oa2}]
This follows immediately from the fact that all elements of our chosen basis $L_1L_2,L_1M_2,L_2^2,M_2^2$ of $H^4(\oa[2])$ are nef, being products of nef divisors, and the fact that the intersection matrix in the lemma above is diagonal.
Indeed, let $S\in\oE(\oa[1]\times\oa[2])$ be any pseudoeffective class. Since the classes $S_{DD},S_{DA},S_{AF},S_{AA}$ give a basis of $H_4(\oa[1]\times\oa[2])$, we can write the class $S$ uniquely as a linear combination $S=a_1S_{DD}+a_2S_{DA}+a_3S_{AF}+a_4S_{AA}$ with some coefficient $a_i\in\RR$. But since the class $L_1L_2$ is nef, we know that $S.L_1L_2\ge 0$. However, this intersection number is simply equal to $a_4/1152$, so we must have $a_4\ge 0$. Similarly intersecting with each of the other three nef generators of $H^4(\oa[1]\times\oa[2])$ gives non-negativity of another coefficient $a_i$. This shows the statement for $\oE(\oa[1]\times\oa[2])$. The claim for $\Nef(\oa[1]\times\oa[2])$ follows by duality.
\end{proof}

We now want to compute the classes of the surfaces on $\oa$ that are the images of our four surfaces on $\oa[1]\times\oa[2]$ under the morphism $N:\oa[1]\times\oa[2]\to\oa$. This if of course equivalent to computing the intersection numbers of these four surfaces $S_{AF},S_{AA},S_{DD},S_{DA}$ on the product $\oa[1] \times \oa[2]$ with the $N^*$-pullbacks of a basis of $H^4(\oa)$. What we shall compute here is the intersection of these surfaces with the $N^*$-pullbacks of
$L^2,M^2,LM$ and $\beta_2$ (where here the classes $L$ and $M$ stand for $L_3$ and $M_3$ on $\oa$). We shall see at the beginning of \Cref{sec:classes}
that these classes are indeed a basis of the $4$-dimensional space $H^4(\oa)$. This will be based on the computations of this section.

To compute the intersection numbers with these pullbacks  is straightforward using \Cref{lm:intersectionsoa1oa2} once we determine the pullbacks. These of course factorize, and are thus easily computed to be
\begin{equation}\label{eq:restrict}
\begin{aligned}
 N^*L=L_1+L_2;\qquad N^*M=M_1+M_2=M_2;\\
  N^*\beta_2=D_1D_2+\beta_2^{(2)}=(12L_1+6L_2)(12L_2-M_2),
\end{aligned}
\end{equation}
where $\beta_2^{(2)}$ denotes the class of the torus rank 2 locus on $\oa[2]$, which is, as we have already pointed out, equal to $6L_2D_2=6L_2(12L_2-M_2)$.

\begin{prop}\label{prop:intersectionsoa1oa2}
The intersection numbers of the generators  $S_{AF}$,$S_{AA}$,$S_{DD}$ and $S_{DA}$ of $\oE(\oa[1] \times \oa[2])$  with the $N^*$-pullback of the classes $L^2$,$LM$,$M^2$,$\beta_2\in H^4(\oa)$ are equal to
$$
  \begin{array}{r|cccc}
  &N^*(L^2)&N^*(LM)&N^*(M^2)&N^*(\beta_2)\\
  \hline\\[-2ex]
  S_{AF}&0&\frac{1}{96}&0&-\frac18\\[0.5ex]
  S_{DA}&\frac{1}{1152}&0&0&\frac{1}{16}\\[0.5ex]
  S_{DD}&0&\frac{1}{48}&\frac{1}{24}&-\frac18\\[0.5ex]
  S_{AA}&\frac{1}{576}&0&0&\frac18 \,.
  \end{array}
$$
\end{prop}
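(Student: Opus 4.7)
The plan is to reduce everything to \Cref{lm:intersectionsoa1oa2} via the projection formula applied to the map $N : \oa[1] \times \oa[2] \to \oa$, using the pullback formulas in~\eqref{eq:restrict}. Since $N$ is a finite map which is generically injective on the decomposable locus (we are in the case $i \neq g-i$, so there is no factor-swapping involution to worry about), for any class $\alpha \in H^4(\oa)$ and any surface $S \subset \oa[1] \times \oa[2]$ we have $[N(S)] \cdot \alpha = S \cdot N^*\alpha$ computed on the product. Thus it suffices to expand each of $N^*(L^2)$, $N^*(LM)$, $N^*(M^2)$ and $N^*(\beta_2)$ into the basis $L_1L_2, L_1M_2, L_2^2, M_2^2$ of $H^4(\oa[1]\times\oa[2])$ and then read off the pairings from \Cref{lm:intersectionsoa1oa2}.

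First I would rewrite the pullbacks. Using~\eqref{eq:restrict} and the fact that $L_1$ is pulled back from the one-dimensional base $\oa[1]$, so that $L_1^2 = 0$, together with the relation $M_2^2 = 2L_2M_2$ on $\oa[2]$ (equivalently $L_2 M_2 = \tfrac12 M_2^2$), we get
\begin{align*}
  N^*(L^2) &= (L_1+L_2)^2 = 2L_1L_2 + L_2^2, \\
  N^*(LM) &= (L_1+L_2)M_2 = L_1M_2 + \tfrac12 M_2^2, \\
  N^*(M^2) &= M_2^2, \\
  N^*(\beta_2) &= (12L_1+6L_2)(12L_2-M_2) = 144\,L_1L_2 - 12\,L_1M_2 + 72\,L_2^2 - 3\,M_2^2.
\end{align*}

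Then I would simply feed each of the four surfaces $S_{AF}$, $S_{DA}$, $S_{DD}$, $S_{AA}$ into these expressions, reading off the non-zero pairings from the diagonal intersection matrix in \Cref{lm:intersectionsoa1oa2}. For instance $S_{AF} \cdot N^*(LM) = S_{AF}\cdot L_1 M_2 = \tfrac{1}{96}$ because the $M_2^2$ term vanishes on $S_{AF}$, and $S_{AA}\cdot N^*(\beta_2) = 144 \cdot S_{AA} \cdot L_1 L_2 = \tfrac{144}{1152} = \tfrac18$, since only the $L_1L_2$ term survives. The remaining fifteen entries are analogous one-line computations.

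There is really no conceptual obstacle here; the only care needed is to verify that the projection formula applies as claimed (i.e.\ that the map $N$ has degree one onto its image on the stack level, so that no further stacky correction factor is introduced beyond what is already baked into the classes computed in \Cref{lm:intersectionsoa1oa2}). This is because a generic decomposable threefold $E \times A$ with $E \in \ab[1]$ and $A \in \ab[2]$ indecomposable has automorphism group $\Aut(E)\times\Aut(A)$, matching the automorphism group of the corresponding point in $\oa[1]\times\oa[2]$. Granting this, the proposition reduces to the bookkeeping above and the resulting $4\times 4$ matrix is the one in the statement.
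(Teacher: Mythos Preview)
Your proof is correct and follows exactly the paper's approach: expand $N^*(L^2)$, $N^*(LM)$, $N^*(M^2)$, $N^*(\beta_2)$ in the basis $L_1L_2,\,L_1M_2,\,L_2^2,\,M_2^2$ using~\eqref{eq:restrict}, $L_1^2=0$, and $M_2^2=2L_2M_2$, then read off each pairing from the diagonal table in \Cref{lm:intersectionsoa1oa2}. One small remark: your discussion of the projection formula and the degree of $N$ is unnecessary here, since the proposition is phrased directly as the pairing of $S$ with $N^*\alpha$ on $\oa[1]\times\oa[2]$, not as $N_*[S]\cdot\alpha$ on $\oa$; that issue only arises later when interpreting these numbers as classes of the image surfaces (cf.\ \Cref{remSAAstackyfactor}).
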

\begin{proof}
These computations are straightforward by combining \Cref{lm:intersectionsoa1oa2} with the formulas~\eqref{eq:restrict} for the pullbacks.
We shall give the details for the surface $S_{DD}$, the other surfaces being similar (and with all computations done by hand and rechecked by Maple to avoid arithmetic errors)
$$
\begin{aligned}
 S_{DD}.N^*(L^2)&=S_{DD}.(L_1+L_2)^2=S_{DD}.(2L_1L_2+L_2^2)=0;\\
 S_{DD}.N^*(LM)&=S_{DD}.(L_1+L_2)M_2=S_{DD}.(L_1M_2+L_2M_2)\\
                            &=S_{DD}.(L_1M_2+M_2^2/2)=0+\frac12\cdot\frac{1}{24}=\frac{1}{48};\\
 S_{DD}.N^*(M^2)&=S_{DD}.M_2^2=\frac{1}{24};\\
 S_{DD}.N^*(\beta_2)&=S_{DD}.(12L_1+6L_2)(12L_2-M_2)\\
                                   &=S_{DD}.(144L_1L_2+72L_2^2+12L_1M_2-6L_2M_2)\\
                                   &=-3S_{DD}.M_2^2=-\frac18\,,
\end{aligned}
$$
where we have used the fact that $L_1^2=0$, for dimension reasons, and the identity $M_2^2=2L_2M_2$.
\end{proof}

\begin{rem}\label{remSAAstackyfactor}
We note that $[S_{AA}]=2[S_{DA}]$. This seems surprising since both surfaces are mapped to the same surface $S_A$ in $\oa[3]$. The reason for this is the following: in the case of the surface $S_{DA}$ the
involution which interchanges the two factors of $\oa[1] \times \oa[1]$ is already taken care of in $\oa[2]$. In contrast to that the surface $S_{AA}$ maps $2$-to-$1$ onto the surface $S_A$ in $\oa[3]$, which means that
the intersection numbers on the pullback are multiplied by $2$.
\end{rem}

\section{The classes of effective surfaces}\label{sec:classes}
The cohomology of $\oa$ has been computed in~\cite{huto1}, and we recall that in particular $H^4(\oa)=\RR^4$.
We claim that a basis of cohomology can be given as
$$
 H^4(\oa)=\RR L^2\oplus \RR LM\oplus \RR M^2\oplus\RR\beta_2\,,
$$
where $L$ and $M=12L-D$ are the classes of the line bundles defined previously , and $\beta_2=\osi{1+1}$ denotes the fundamental class of the codimension two boundary stratum.
Indeed, the four classes $L^2,LM, M^2,\beta_2$ certainly give elements of $H^4(\oa)$. To see that they are linearly independent, one can trace through the spectral sequence computation of homology in~\cite{huto1}, but it will also follow from the non-degeneracy of the intersection matrix of these classes with our surfaces $S_A,S_F,S_D,\osi{K3+1},\osi{C4}$, which we will now compute. This is the main result of this section:
\begin{prop}\label{prop:intersections}
The intersection numbers of the classes in $H^4(\oa)$ and of our five effective surfaces are as follows
\begin{equation}\label{eq:intersect}
\begin{array}{r|cccr}
&L^2&LM&M^2&\beta_2\\
\hline&&&&\\[-2ex]
S_A&\frac{1}{1152}&0&0&\frac{1}{16}\\[0.5ex]
S_F&0&\frac{1}{96}&0&-\frac{1}{8}\\[0.5ex]
S_D&0&\frac{1}{48}&\frac{1}{24}&-\frac18\\[0.5ex]
\osi{C4}&0&0&\frac{1}{48}&-\frac{1}{16}\\[0.5ex]
\osi{K3+1}&0&0 &\frac14&\frac14.
\end{array}
\end{equation}
\end{prop}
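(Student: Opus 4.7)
The first three rows can be derived by the projection formula combined with \Cref{prop:intersectionsoa1oa2}. The finite map $N\colon \oa[1]\times\oa[2]\to\oa$ restricts to a degree-one morphism $S_{DA}\to S_A$, $S_{AF}\to S_F$, and $S_{DD}\to S_D$. For $S_A$ it is essential to push forward from $S_{DA}$ rather than from $S_{AA}$: as noted in \Cref{remSAAstackyfactor}, the latter is a degree-two cover of $S_A$ and would introduce a spurious factor of~$2$ coming from the involution interchanging the two varying elliptic factors. Granting this, the projection formula gives $S\cdot\alpha = [S_\bullet]\cdot N^*\alpha$ for every $\alpha\in H^4(\oa)$ and matching preimage $S_\bullet$, and so the first three rows of~\eqref{eq:intersect} are exactly the corresponding rows of the table in \Cref{prop:intersectionsoa1oa2}.

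For the last two rows, which involve the deep toroidal boundary strata $\osi{K3+1}$ and $\osi{C4}$, I would realize each closed stratum as a finite quotient of a product of copies of $\ox[1]$ (and of $\oa[1]$), the quotient being dictated by the cone data of~\eqref{table:cones}. Both strata are contained in $\beta_3=\osi{1+1+1}$, which is built from triples of elliptic curves together with the toric data prescribed by the cone; the extra off-diagonal generators $(x_1-x_2)^2$ for $\si{K3+1}$, and $(x_1-x_3)^2,(x_2-x_3)^2$ for $\si{C4}$, encode additional identifications among the three $\oa[1]$-factors. With such a presentation in hand, the pullback to the cover of each of $L$, $M=12L-D$, and $\beta_2=\osi{1+1}$ is expressible in terms of the Hodge classes on the $\oa[1]$-factors and the classes of the various rank-one and rank-two boundary strata incident to the given cone; the four intersection numbers are then computable using \Cref{prop:intersectionX1}.

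The main obstacle will be pinning down the stacky factors. The stabilizer in $\GL(3,\ZZ)$ of each of the cones $\si{K3+1}$ and $\si{C4}$ governs the order of the finite quotient that presents the stratum, and an error by even a small integer factor could render the class of $\osi{K3+1}$ or $\osi{C4}$ a positive combination of the first three rows, destroying extremality. To guard against this I would verify the last two rows by an independent calculation: for $\osi{K3+1}$, by restricting the four cohomology classes to the closed three-dimensional stratum $\osi{K3}$, whose geometry is studied separately later in the paper; and for $\osi{C4}$, by comparing with intersection numbers of boundary cycles on $\oa$ already recorded in van der Geer's computation~\cite{vdgeerchowa3} of the Chow ring of $\oa$.
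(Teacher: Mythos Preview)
Your treatment of the first three rows is exactly the paper's: pushforward under $N$ from the corresponding surfaces in $\oa[1]\times\oa[2]$, with the caveat about $S_{DA}$ versus $S_{AA}$ from \Cref{remSAAstackyfactor}.

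For the last two rows your primary and secondary methods are essentially swapped relative to the paper. The paper does \emph{not} build an explicit finite-quotient model of $\osi{K3+1}$ or $\osi{C4}$ to compute these numbers. Instead it simply reads off everything from van der Geer's tables in~\cite{vdgeerchowa3}: one has $L|_{\beta_3}=0$, so the cycle $\sigma_3=\osi{K3}+\beta_3$ satisfies $L\cdot\sigma_3=L\cdot\osi{K3}$, and combined with the identity $12L\cdot\osi{K3}=\osi{K3+1}$ (the fiber over the cusp of $\osi{K3}\to\oa[1]$) this converts van der Geer's numbers $LD^2\sigma_3=L\beta_2\sigma_3=\tfrac{1}{48}$ directly into the $\osi{K3+1}$-row. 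Then $\sigma_4=\osi{K3+1}+\osi{C4}$, so the $\osi{C4}$-row is obtained by subtracting the $\osi{K3+1}$-row from van der Geer's $D^2\sigma_4=\tfrac{13}{48}$ and $\beta_2\sigma_4=\tfrac{3}{16}$. The detailed geometry of $\osi{K3}$ is only invoked later, in \Cref{sec:osiK3}, as an \emph{independent reconfirmation} of the $\osi{K3+1}$-row---precisely the role you assigned to it as a check.

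One inaccuracy in your sketch: the closed strata $\osi{K3+1}$ and $\osi{C4}$ sit inside $\beta_3=\pi^{-1}(\ab[0])$, where the Hodge bundle is trivial and there is no abelian-variety base left; they are (finite quotients of) toric surfaces, not products of copies of $\ox[1]$. A direct computation along the lines you outline is certainly possible via their toric presentations, but would be considerably more laborious than the paper's route of quoting~\cite{vdgeerchowa3} and doing two subtractions.
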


One result of this computation is that the classes of these five surfaces generate a cone with five extremal rays. It is worth pointing out that these computations are very sensitive to even minor numerical inaccuracies.
\begin{cor}\label{cor:indep}
Among the classes of the surfaces $S_A,S_F,S_D,\osi{C4},\osi{K3+1}$ no one class lies in the cone generated by the other four.
\end{cor}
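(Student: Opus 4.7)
The plan is to reduce \Cref{cor:indep} to a short linear algebra argument based on the intersection matrix~\eqref{eq:intersect}. Since $L^2, LM, M^2, \beta_2$ form a basis of $H^4(\oa)$ (as follows from the non-singularity of a $4\times 4$ submatrix of~\eqref{eq:intersect}, for example the submatrix obtained by omitting the row of $\osi{K3+1}$), the homology classes of our five surfaces are precisely the rows of~\eqref{eq:intersect}, viewed as vectors in $\RR^4$. The first step is to read off the linear dependence among these five vectors. The column labelled $L^2$ has a nonzero entry only in the row of $S_A$, which immediately forces the coefficient of $[S_A]$ in any non-trivial linear relation to vanish; the remaining three columns then determine the relation uniquely up to scale, namely
$$
8\,[S_F]\;-\;4\,[S_D]\;-\;4\,[\osi{C4}]\;+\;[\osi{K3+1}]\;=\;0\,.
$$
A straightforward row reduction of the $5\times 4$ matrix confirms that its kernel is genuinely one-dimensional, so this is the only relation up to scalar.

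The corollary then follows directly. Because $[S_A]$ does not appear in the relation, $S_A$ is not even contained in the real linear span of the remaining four surfaces, and a fortiori not in their non-negative cone. For each of the other four classes, one-dimensionality of the kernel implies that any expression as a linear combination of the remaining four is obtained by solving the displayed relation for that class, and is therefore unique. By inspection each such solved expression has a strictly negative coefficient on one of the other surfaces in our list: solving for $[S_F]$ produces $-\tfrac18$ on $[\osi{K3+1}]$; solving for $[S_D]$ produces $-1$ on $[\osi{C4}]$; solving for $[\osi{C4}]$ produces $-1$ on $[S_D]$; and solving for $[\osi{K3+1}]$ produces $-8$ on $[S_F]$. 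In every case the negative coefficient rules out membership in the non-negative cone of the other four.

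There is no serious geometric obstacle here; the whole argument is essentially bookkeeping on~\eqref{eq:intersect}. The only genuine risk is arithmetic, since the sign pattern above is delicate and would be destroyed by any small error in a stacky factor from \Cref{prop:intersections}. As the authors themselves emphasise in the paragraph preceding \Cref{sec:stratification}, I would therefore verify the displayed relation by substituting it into each of the four columns of~\eqref{eq:intersect} as an independent check on the intersection numbers computed in the previous sections.
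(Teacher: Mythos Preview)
Your argument is correct and follows essentially the same approach as the paper: both isolate $S_A$ via the $L^2$ column, identify the unique linear relation among the remaining four classes (your $8[S_F]-4[S_D]-4[\osi{C4}]+[\osi{K3+1}]=0$ is a rescaling of the paper's $S_D-2S_F+\osi{C4}-\tfrac14\osi{K3+1}=0$), and then observe that the sign pattern of the coefficients prevents any one class from being a non-negative combination of the other three. The only cosmetic difference is that you solve the relation for each class in turn and exhibit an explicit negative coefficient, whereas the paper argues once about the signs of the four coefficients.
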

\begin{proof}
It suffices to show that the class of none of these five surfaces can be written as a linear combination of the other four classes with positive coefficients. While this computation is straightforward, numerically it depends crucially on the intersection numbers computed above.

Indeed, as $S_A$ is the only surface that has non-zero intersection number with $L^2$, it is linearly independent with the span of the remaining 4 surfaces, and cannot be involved in any linear relation with them. The remaining 4 classes span a 3-dimensional space, and thus satisfy precisely one linear relation. Since the only two classes that have non-trivial intersection with the class $LM$ are $S_D$ and $S_F$, this relation must either be that the classes~$\osi{C4}$ and~$\osi{K3+1}$ are proportional --- which is not the case --- or have the form $S_D-2S_F-a\osi{C4}-b\osi{K3+1}=0$. In the later case we compute $a=-1$ and $b=1/4$. Since the signs of $a$ and $b$ are opposite, there is thus no linear relation satisfied by $S_D$, $S_F$, $\osi{C4}$, $\osi{K3+1}$ such that one class is expressed as a non-negative linear combination of the other three (which would require one of the coefficients $1,-2,a,b$ to have a different sign than the other three).
\end{proof}

To prove the proposition we first observe that the claims about the surfaces $S_A$, $S_F$ and $S_D$ immediately follow from \Cref{prop:intersectionsoa1oa2} and \Cref{remSAAstackyfactor}, as these surfaces are images of the surfaces contained in~$\oa[1]\times\oa[2]$, and we computed the relevant intersection numbers on $\oa[1]\times\oa[2]$.
Hence it remains to deal with the classes of the boundary cones $\osi{C4}$ and $\osi{K3+1}$. The intersection numbers of these boundary strata with our basis of~$H^4(\oa)$ can be read off from the work of Tsushima~\cite[Prop.~3.4]{tsushima} or van der Geer~\cite{vdgeerchowa3}, if one is careful to match the stackiness convention and notation. We first deal with the surface $\osi{K3+1}$.

\begin{lm}\label{lm:classosiK3+1}
The surface $\osi{K3+1}$ has the following intersection numbers with a basis of $H^4(\oa)$:
$$
 \ L^2.\osi{K3+1}=LD.\osi{K3+1}=LM.\osi{K3+1}=0 \, ,
$$
$$
D^2.\osi{K3+1}=M^2.\osi{K3+1} = \beta_2.\osi{K3+1}=\frac14\,  .
$$
\end{lm}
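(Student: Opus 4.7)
The strategy splits into two stages. First, all three vanishing assertions follow from the single observation that the Hodge line bundle $L$ restricts to zero on $\osi{K3+1}$. Indeed $\osi{K3+1}\subset\beta_3=\pi^{-1}(\ab[0])$ lies in the deepest Satake stratum, over which every polarised semi-abelic variety has fully degenerated to a compactified three-dimensional algebraic torus. On a level-$n$ cover the Hodge bundle $\EE$ restricted to $\beta_3$ is generated by the flat forms $dz_i/z_i$ and hence is trivial, so $L=\det\EE$ has trivial rational Chern class on $\beta_3$. Consequently $L|_{\osi{K3+1}}=0$ in $H^2(\osi{K3+1},\RR)$, which immediately yields $L^2\cdot\osi{K3+1}=LD\cdot\osi{K3+1}=LM\cdot\osi{K3+1}=0$ via $M=12L-D$. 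The same identity gives $M|_{\osi{K3+1}}=-D|_{\osi{K3+1}}$ and hence $M^2\cdot\osi{K3+1}=D^2\cdot\osi{K3+1}$, so only the two numbers $D^2\cdot\osi{K3+1}$ and $\beta_2\cdot\osi{K3+1}$ remain to be pinned down.

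For these I would pass to a level cover $\oa[3](n)$ with $n\ge 3$, where the toroidal boundary is a normal crossings divisor and each lift of $\osi{K3+1}$ is a smooth two-dimensional toric stratum sitting at the intersection of exactly four boundary divisors, one for each ray $x_1^2$, $x_2^2$, $x_3^2$, $(x_1-x_2)^2$ of the $K3+1$ cone. The restrictions of $D$ and of $\beta_2$ to such a stratum expand into sums of classes of these four boundary divisors, respectively of their pairwise intersections, and the resulting self-intersections can be read off from the local toric fan. Summing over the $\PSp(6,\ZZ)$-orbit of the configuration and dividing by the order of the stabiliser of the $K3+1$ cone, one expects the value $\frac{1}{4}$ in both cases, matching the intersection tables of Tsushima~\cite[Prop.~3.4]{tsushima} and van der Geer~\cite{vdgeerchowa3}.

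The main obstacle is precisely the point flagged in the Introduction: the careful bookkeeping of stacky factors. A miscounted automorphism of the $K3+1$ cone, or a missed factor of $2$ coming from the Kummer involution on a neighbouring stratum, would easily replace $\frac{1}{4}$ by $\frac{1}{2}$ or $\frac{1}{8}$, and such a slip could destroy the extremality of one of the classes in \Cref{thm:main}. To control this I would, in parallel, seek a direct geometric model for $\osi{K3+1}$ --- plausibly as an orbifold quotient built from the ``$K3$'' cone in genus $2$ and the ``$1$'' cone in genus $1$ --- and recompute the two self-intersections there; the detailed analysis of the ambient stratum $\osi{K3}$ to be carried out in \Cref{sec:osiK3} will in any case furnish an independent check on both numbers.
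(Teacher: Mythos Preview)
Your first stage is exactly what the paper does: $L$ is a pullback from $\Sat$, the stratum $\osi{K3+1}$ sits in $\beta_3=\pi^{-1}(\ab[0])$, hence $L|_{\osi{K3+1}}=0$, and the three vanishings together with $M^2\cdot\osi{K3+1}=D^2\cdot\osi{K3+1}$ follow.

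For the two remaining numbers the paper takes a shorter route than the level-cover toric computation you outline. The key observation is the identity
\[
\osi{K3+1}=12L\cdot\osi{K3}\,,
\]
which holds because $\osi{K3}$ fibres over $\oa[1]$ with boundary $\osi{K3+1}$ equal to the fibre over the cusp, and the cusp has class $12L_1$. Combined with $L|_{\beta_3}=0$, this gives $12L\cdot\sigma_3=12L\cdot\osi{K3}=\osi{K3+1}$ where $\sigma_3=\osi{K3}+\beta_3$. Van der Geer has already tabulated $LD^2\sigma_3=L\beta_2\sigma_3=\tfrac{1}{48}$, so multiplying by $12$ immediately yields $D^2\cdot\osi{K3+1}=\beta_2\cdot\osi{K3+1}=\tfrac14$. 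This bypasses entirely the stabiliser-counting you flag as the main hazard.

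Your direct approach is in principle viable, but as written it is only a sketch: ``one expects the value $\tfrac14$'' followed by a citation is not a derivation, and the stacky bookkeeping you correctly worry about is never actually carried out. The paper does provide the independent geometric model you ask for --- the alternative proof via $\wub$ in \Cref{sec:osiK3} --- but that comes later and serves as a cross-check rather than the primary argument.
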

\begin{proof}
Indeed, van der Geer~\cite[Section 3]{vdgeerchowa3} computes the following intersection numbers on $\oa$:
$$
 LD^2\sigma_3=\frac{1}{48};\quad L\beta_2\sigma_3=\frac{1}{48}\,.
$$
Here in the notation of~\cite{vdgeerchowa3} the class $\sigma_1$ is equal to our $D_1$. We recall that  $\beta_3$ is the preimage of~$\ab[0]$ under the map $\pi:\oa \to \Sat[3]$.
To explain the cycle~$\sigma_3$ it is convenient  to work on the level $2$ cover~$\oa{}(2) $ of~$\oa$. We recall that the (reduced) preimage of the irreducible boundary divisor $D\subset\oa$ on the cover $\oa{}(2)$
is the union $\cup D_i$ of $2^{2g}-1$ irreducible components indexed by points of $(\ZZ/2\ZZ)^{2g}\setminus\lbrace 0\rbrace$.
We also recall the following basic fact about non-empty triple intersections of the form $D_i \cap D_j \cap D_k$. Such an intersection is mapped under the map $\pi:\oa(2) \to \Sat[3](2)$ to either  ${\Sat[2]}(2)$ or to ${\Sat[1]}(2)$, and we call such an
intersection {\em local} or {\em global} respectively, see ~\cite[Section 5]{ergrhu2}.
These two cases correspond to the
cones $\si{K3}=\langle x_1^2,x_2^2,(x_1-x_2)^2\rangle$  and $\si{1+1+1}= \langle x_1^2,x_2^2, x_3^2\rangle$ respectively. In the first case we will write these intersections as $D_i \cap D_j \cap D_{i+j}$, whereas in the second case
we use the notation $D_i \cap D_j \cap D_k$. The cycle $\sigma_3$ is defined as the sum of the images of all triple intersections, including both the local and global cases. This shows that  $\sigma_3=\osi{K3}+\beta_3$.

Since $L$ is a pullback from $\Sat$, and $\beta_3$ is mapped to the point $\oa[0]\in\Sat$, it follows that $L|_{\beta_3}$ is trivial.
We further recall from \Cref{sec:stratification}, and in particular Equation \eqref{eq:strataclosures}, that $\osi{K3}= \si{K3} \cup \osi{K3+1}$. This shows that the boundary of $\osi{K3}$ is the preimage of $\{i\infty\}$ under the map to $\oa[1]$. Its
class is given by $12L$ and we obtain the relation
$$
12L\osi{K3}=\osi{K3+1}.
$$
(For a detailed discussion of the geometry of $\osi{K3}$ we also refer to \Cref{sec:osiK3}.)
Using this relation we compute
$$
 12LD^2\sigma_3=D^2\osi{K3+1}=\frac{12}{48}=\frac14, \, 12L\beta_2\sigma_3=\beta_2\osi{K3+1}=\frac{12}{48}=\frac14\,.
$$
\end{proof}
\begin{rem}\label{rem:reprooosiK3+1}
We shall give a different computation for the class of $\osi{K3+1}$ later after \Cref{rem:permutations}, thus providing a further check for our computations above .
\end{rem}
To compute the intersection numbers of $\osi{C4}$ with a basis of $H^4(\oa)$, we further use van der Geer's computations from~\cite[Section 3]{vdgeerchowa3}: he computes
$$
D^2\sigma_4=\frac{13}{48};\quad \beta_2\sigma_4=\frac{3}{16}
$$
where $\sigma_4$ is the pushforward of the  intersection $\sum_{i<j<k<\ell}D_iD_jD_kD_{\ell}$ on $\oa{}(2)$ to $\oa$.
Thus $\sigma_4=\osi{K3+1}+\osi{C4}$ in our notation, since these are the only codimension 4 cones.
\begin{lm}
The surface $\osi{C4}$ has the following intersection numbers with a basis of $H^4(\oa)$:
$$
 L^2.\osi{C4}=LD.\osi{C4}= ML.\osi{C4}=0, \,  D^2.\osi{C4}=M^2.\osi{C4}=\frac{1}{48}, \, \beta_2.\osi{C4}=-\frac1{16}\,.
$$
\end{lm}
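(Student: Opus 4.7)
The strategy is to parallel the proof of the previous lemma, using van der Geer's computation of $D^2\sigma_4$ and $\beta_2\sigma_4$ on $\oa{}(2)$ pushed forward to $\oa$, together with the stratum-level decomposition
$$
\sigma_4 \;=\; \osi{K3+1} \,+\, \osi{C4},
$$
which holds because these are the only two closures of codimension-four cones reached by intersecting four components $D_i \cap D_j \cap D_k \cap D_\ell$ (the finer strata $\si{K4-1}$ and $\si{K4}$ have higher codimension and thus contribute zero to the dimension-two push-forward). Subtracting the intersection numbers of $\osi{K3+1}$ computed in \Cref{lm:classosiK3+1} from van der Geer's numbers then gives
$$
D^2.\osi{C4} \;=\; \tfrac{13}{48} - \tfrac{1}{4} \;=\; \tfrac{1}{48},\qquad \beta_2.\osi{C4} \;=\; \tfrac{3}{16}-\tfrac{1}{4} \;=\; -\tfrac{1}{16}.
$$

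For the vanishing statements, the key observation is that $\osi{C4}\subset\beta_3$ by the stratum containments \eqref{eq:strataclosures}, and $\beta_3=\pi^{-1}(\ab[0])$ is contracted to a point under $\pi:\oa\to\Sat$. Since $L$ is a pullback from $\Sat$, its restriction to $\beta_3$, and in particular to $\osi{C4}$, is trivial. Consequently
$$
L^2.\osi{C4} \;=\; LD.\osi{C4} \;=\; LM.\osi{C4} \;=\; 0.
$$
Expanding $M=12L-D$ gives $M^2 = 144 L^2 - 24 LD + D^2$, so upon restriction to $\osi{C4}$ only the $D^2$ term survives, yielding $M^2.\osi{C4}=D^2.\osi{C4}=\tfrac{1}{48}$.

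The only step requiring genuine care is the identity $\sigma_4=\osi{K3+1}+\osi{C4}$ as cycle classes (not merely set-theoretically). I would justify it by lifting to a level cover $\oa{}(2)$, where a non-empty quadruple intersection $D_i\cap D_j\cap D_k\cap D_\ell$ maps, under the stabiliser action of $\Sp(6,\ZZ/2\ZZ)$, onto either $\osi{K3+1}$ or $\osi{C4}$ depending on whether the four vectors $i,j,k,\ell\in(\ZZ/2\ZZ)^6\setminus\{0\}$ are linearly dependent (giving the $\langle x_1^2,x_2^2,x_3^2,(x_1-x_2)^2\rangle$ pattern) or not (giving the $\langle x_1^2,x_2^2,(x_1-x_3)^2,(x_2-x_3)^2\rangle$ pattern of $\si{C4}$). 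The relevant stacky multiplicities line up because both cones are minimal with these generating patterns, so no further correction factors arise. This bookkeeping is the main obstacle; the rest of the proof is then a three-line arithmetic.
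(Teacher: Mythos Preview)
Your proof is correct and follows essentially the same route as the paper: use $L|_{\beta_3}=0$ to get the vanishings, then subtract the already-computed $\osi{K3+1}$ numbers from van der Geer's $\sigma_4$ numbers via $\sigma_4=\osi{K3+1}+\osi{C4}$. One small inaccuracy in your justification paragraph: for $\si{C4}$ the four associated vectors in $(\ZZ/2\ZZ)^3$ are also linearly dependent (their sum is zero), so the correct distinguishing criterion is whether some \emph{three} of the four satisfy a relation (giving the $K3+1$ type) or only all four together do (giving the $C4$ type)---but this does not affect the argument, and the paper simply asserts the decomposition without this level of detail.
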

\begin{proof}
Since $\osi{C4}\subset\beta_3$ and $L|_{\beta_3}$ is trivial, it follows that $L|_{\osi{C4}}$ is zero.

We now compute the other two intersection numbers:
$$
\begin{aligned}
 D^2.\osi{C4}&=D^2.\sigma_4 -D^2.\osi{K3+1}=\frac{13}{48}-\frac14=\frac{1}{48},\\
 \beta_2.\osi{C4}&=\beta_2.\sigma_4- \beta_2\osi{K3+1}=\frac{3}{16}-\frac14=-\frac{1}{16}\,.
\end{aligned}
$$
\end{proof}

\section{The intersection theory on $V=\ox[1]\times\ox[1]/S_2$}\label{sec:ox1ox1}
The crucial special feature of genus 3 is the fact that the boundary components of a level cover~$\oa(n)$ can be interpreted as universal abelian surfaces.
As this will play a major role for us and since the relevant geometry turns out to be surprisingly delicate and is not much discussed in the literature, we will briefly describe this here.

Let $D_i(n)$ be a boundary component of $\oa(n)$.Then under the contraction morphism $\pi:\oa(n)\to\Sat(n)$ the component $D_i(n)$ is mapped to a copy of $\Sat[2](n)$. As discussed in~\cite[Section 4]{tsushima} and~\cite[Section 3]{huleknef}, it turns out that the restriction $\pi|_{D_i(n)}: D_i(n) \to \Sat[2](n)$ factors through a morphism $f_n: D_i(n) \to \oa[2](n)$. Furthermore,~$f_n$ extends the universal family $\ua[2](n) \to \ab[2](n)$ in such a way that the fiber over a boundary point of $\ab[2](n)$ is isomorphic to the $2n$ times principally polarized
semi-abelic surfaces (with level~$n$ structure) parameterized by this point.
We shall therefore write this as $p_n: \ox[2](n) \to \oa[2](n)$.
The map~$f_n$ is equivariant with respect to the stabilizer of the given cusp within the deck transformation group of the cover $\oa[2](n)\to\oa[2]$. Taking the quotient, we obtain a
fibration $p: \ox[2] \to \oa[2]$ which extends the universal Kummer family over $\ab[2]$.

On the other hand, we also have a finite map $f: \ox[2] \to \partial \oa[3]$. It turns out that it is not an isomorphism, as elements of the deck transformation group of the cover $\oa(n)$ which do not stabilize the chosen boundary divisor can and do introduce further identifications. For this we recall
that both $\ox[2](n)$ and $\oa[2](n)$ are toroidal varieties and that the map between them is a morphism of toroidal varieties.  If we work with the standard cusp corresponding to $x_1^2$ (see Table~\ref{table:cones}), the relevant lattices are $\Sym^2(\ZZ^3)/\langle x_1^2\rangle$
and $\Sym^2(\ZZ^2)$ respectively, where we use $x_2,x_3$ for the target lattice, and the map is given by setting $x_1=0$. Now consider the cones $\langle x_1^2, x_2^2, x_3^2, (x_2-x_3)^2\rangle$ and
$\langle x_1^2, x_2^2, (x_1-x_2)^2,x_3^2 \rangle$, which both give rise to $2$-dimensional strata in the chosen boundary component of $\oa(n)$. Both of these cones
lie in the same $\GL(3,\ZZ)$-orbit as $\si{K3}$ and hence these strata will
be identified in $\partial  \oa$. However, they give two different surfaces in $\ox[2](n)$ which behave differently under the map  $p_n: \ox[2](n) \to \oa[2](n)$ and will therefore not be identified under
the stabilizer group of the chosen cusp. In the first case the cone is
mapped to  the $3$-dimensional cone $\langle x_2^2, x_3^2, (x_2-x_3)^2\rangle$, and thus the corresponding toroidal stratum is mapped to a deepest point in the boundary of $\oa[2](n)$, i.e. a point where two components of an $n$-gon over a cusp of $\oa[1](n)$ meet.
In the second case the cone is mapped to the $2$-dimensional cone $\langle x_2^2,x_3^2 \rangle$, and this stratum is mapped  to a component of an $n$-gon.

\smallskip
For our purposes it will be enough to use the finite map $f: \ox[2] \to \partial \oa$ and we will use this to prove that  the surfaces $S_D,S_F,S_A$ are extremal effective in $\Eff(\partial\oa)$,
by arguing the extremality of their preimages on $\ox$ under $f$. Then since the map $\pi$ contracts $S_D$ and $S_F$ to curves, we will be able to deduce that they are extremal effective on $\oa$, thus finally proving extremality of $S_D$, and reproving the extremality of~$S_F$.

It will turn out that the preimage in $\ox$ of the decomposable locus $\oa[2]$ plays an especially important role in this geometry.
As usual, we will think of the decomposable locus as $\oa[1] \times \oa[1]$ modulo the involution interchanging the two factors.
The inverse image under the map  $f: \ox \to \oa[2]$ of the decomposable locus is isomorphic to $V:=\ox[1]\times\ox[1]/S_2$, where the symmetric group $S_2$ permutes the two factors.
We will study $\oE(V)$ using the same approach as for $\oE(\oa[1]\times\oa[2])$.
For this we recall that $H^2(\ox[1])=\RR L\oplus \RR T$, with the intersection theory governed by \Cref{prop:intersectionX1}. To find generators for $H^4(V)$, we note that $$H^4(\ox[1]\times\ox[1])=H^0(\ox[1])\otimes H^4(\ox[1])\oplus H^2(\ox[1])\otimes H^2(\ox[1])\oplus H^0(\ox[1])\otimes H^4(\ox[1]),$$ and $H^4(V)$ is then simply the subspace of $H^4(\ox[1]\times\ox[1])$ invariant under the involution interchanging the two factors. We thus obtain
$$
 H^4(V)=\RR L_1L_2\oplus \RR T_1T_2\oplus \RR(L_1T_1+L_2T_2)\oplus\RR(L_1T_2+L_2T_1),
$$
where $L_i$ and $T_i$ denote the pullbacks of the classes $L$ and $T$ from the corresponding factor.
\begin{prop}\label{prop:effV}
The cone of nef codimension two classes on $V$ is generated by the four generators identified above:
$$
 \Nef(V)=\RR_{\ge 0} L_1L_2+ \RR_{\ge 0} T_1T_2+ \RR_{\ge 0}(L_1T_1+L_2T_2)+\RR_{\ge 0}(L_1T_2+L_2T_1),
$$
while the cones of pseudoeffective and effective surfaces on $V$ coincide, and are equal to
$$
 \oE(V)=\RR_{\ge 0} L_1L_2+ \RR_{\ge 0} Z_1Z_2+ \RR_{\ge 0}(L_1Z_1+L_2Z_2)+\RR_{\ge 0}(L_1Z_2+L_2Z_1).
$$
\end{prop}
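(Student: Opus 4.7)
The plan is to adapt the strategy used in the proof of Proposition~4.1: compute an intersection matrix between the proposed nef and effective generators on $V$, show it is essentially diagonal, and then deduce both cone statements by exactly the same argument as for $\oa[1]\times\oa[2]$.

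First I would verify that both candidate generating sets lie in $\Nef(V)$ and $\oE(V)$ respectively. For the nef side, $L$ and $T$ are nef divisor classes on $\ox[1]$: one checks this by pairing with the effective curves $Z$ and $F=12L$ and using Proposition~3.1. Hence their pullbacks $L_i, T_i$ to $\ox[1]\times\ox[1]$ are nef, and so are all products and symmetrised products; since these classes are $S_2$-invariant they descend to nef classes on $V$. For the effective side, $Z$ is effective as the zero section and $L=\frac{1}{12}F$ is effective as a rational multiple of the fiber class, so each of $L_1L_2$, $Z_1Z_2$, $L_1Z_1+L_2Z_2$, $L_1Z_2+L_2Z_1$ is a sum of products of effective classes on the factors and therefore represents an effective class on $V$.

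Next I would compute the $4\times 4$ matrix of pairings between these generators using K\"unneth and Proposition~3.1, with an overall factor $1/2$ coming from the quotient $\ox[1]\times\ox[1]\to V$. The key qualitative feature is that $L_1T_1+L_2T_2$ and $L_1Z_1+L_2Z_2$ both have K\"unneth bidegree $(4,0)+(0,4)$, while the remaining generators on both sides are of bidegree $(2,2)$, so cross-bidegree pairings automatically vanish. Within bidegree $(2,2)$, many further entries vanish by the identities $L^2=0$ and $TZ=0$ of Proposition~3.1. The resulting intersection matrix on $V$ is
\begin{equation*}
\begin{array}{r|cccc}
 & L_1L_2 & Z_1Z_2 & L_1Z_1{+}L_2Z_2 & L_1Z_2{+}L_2Z_1\\
\hline\\[-2ex]
L_1L_2 & 0 & \frac{1}{1152} & 0 & 0\\[0.5ex]
T_1T_2 & \frac{1}{1152} & 0 & 0 & 0\\[0.5ex]
L_1T_1{+}L_2T_2 & 0 & 0 & \frac{1}{576} & 0\\[0.5ex]
L_1T_2{+}L_2T_1 & 0 & 0 & 0 & \frac{1}{576}
\end{array}
\end{equation*}
which is block-diagonal and non-degenerate, so that both candidate generating sets are in fact bases of $H^4(V)$.

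The proof then concludes exactly as in Proposition~4.1. Any pseudoeffective class $S\in\oE(V)$ admits a unique expansion $S=a_1L_1L_2+a_2Z_1Z_2+a_3(L_1Z_1+L_2Z_2)+a_4(L_1Z_2+L_2Z_1)$, and pairing with each of the four nef generators in turn picks out one coefficient times a strictly positive number, forcing every $a_i\ge 0$. Hence $\oE(V)$ is contained in, and so coincides with, the cone generated by these four effective classes; this cone also equals $\Eff(V)$ since each generator is already effective, and the statement for $\Nef(V)$ follows by duality. The main delicate point will be the careful tracking of stacky normalisations---the factor $1/2$ from the $S_2$-quotient and the fractional entries of Proposition~3.1 which already incorporate the stack structure of $\oa[1]$---since a small numerical error at any stage would destroy the diagonality of the matrix above and with it the entire conclusion.
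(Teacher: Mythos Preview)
Your proposal is correct and follows essentially the same approach as the paper: verify that the proposed generators are nef (resp.\ effective), compute the $4\times4$ intersection matrix on $V$ using the intersection table on $\ox[1]$ together with the factor $1/2$ from the $S_2$-quotient, observe that the matrix is (anti-)diagonal with strictly positive nonzero entries, and then conclude exactly as in Proposition~4.1. Your K\"unneth bidegree observation is a clean way to organise the vanishing of the off-diagonal entries, but it is not a different argument---the paper simply records the same matrix and invokes the same duality argument.
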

Here we recall that $T=Z+L$ on $\ox[1]$, where $Z$ is the $0$-section $\oa[1]\to\ox[1]$, and we denoted $Z_i$ the pullbacks from the two factors. Observe in particular that $L_1Z_1+L_2Z_2=L_1T_1+L_2T_2$, since $T_i=Z_i+L_i$ and $L_i^2=0$.
\begin{proof}
To start with, we observe that $\Eff[1](\ox[1])=\RR_{\ge 0}L+\RR_{\ge 0} Z$, while the nef cone is $\Nef[1](\ox[1])=\RR_{\ge 0}L+\RR_{\ge 0} T$. Indeed, by definition, the classes $L$ and $T$ are nef, and thus to prove that they are the extremal rays of
the (in this case two-dimensional)  effective cone it suffices to show that they have zero intersection numbers with some effective classes, which are of course simply the classes~$L$ and~$Z$ respectively, by \Cref{prop:intersectionX1}.
We can now compute the intersection numbers on $V$ using \Cref{prop:intersectionX1} and taking the involution by $S_2$ into account. This is straightforward and results in the following matrix of intersection numbers:
$$
  \begin{array}{r|cccc}
  &L_1L_2&T_1T_2&L_1T_1+L_2T_2&L_1T_2+L_2T_1\\
  \hline\\[-2ex]
  L_1L_2&0&\frac{1}{1152}&0&0\\[0.5ex]
  Z_1Z_2&\frac{1}{1152}&0&0&0\\[0.5ex]
  L_1Z_1+L_2Z_2&0&0&\frac{1}{576}&0\\[0.5ex]
  L_1Z_2+L_2Z_1&0&0&0&\frac{1}{576}\,.
  \end{array}
$$
In particular, we observe that each of the claimed generators of $\Nef(V)$ has non-zero pairing with exactly one of the claimed generators of $\oE(V)$, and that this pairing is positive.
Since  the 4 claimed generators of the nef cone are clearly nef classes, and the 4 claimed generators of the pseudoeffective cone are effective classes, the same argument as in the proof of \Cref{prop:effoa1oa2} applies verbatim.
\end{proof}

We now compute the classes of the images in $\oa$ under the map $f$ of the 4 effective surfaces above that generate the cone $\oE(V)$. For this, we compute their intersection numbers with the pullback of the basis of $H^4(\oa)$ pulled back to $H^4(\partial\oa)$ under the inclusion $\partial\oa\hookrightarrow\oa$, and then pulled back further under $f$.
Thus we need to compute the restrictions of the $f$-pullbacks of classes $L,M$ and $\beta_2$ to~$V$.
In order to avoid unnecessarily complicated notation we shall denote the pulled back classes to~$V$ by the same letter as the classes in $\oa$, dropping $f^*$ in notation. We clearly have $L|_V=L_1+L_2$. To understand the restriction of the boundary and of $\beta_2$ to $V$ is more complicated and requires going to a level cover, where we can use the fibration  $p_n: \ox[2](n) \to \oa[2](n)$. On a level cover, every component~$V_i(n)$ of~$V(n)$ is a product of two universal elliptic curves $\ox[1](n) \times \ox[1](n)$. Note that here we mean the direct product, not the fiber product over $\oa[1](n)$, which will be considered in \Cref{sec:osiK3}. We also recall that the fiber of the universal elliptic curve $\ox[1](n) \to \oa[1](n)$ over a cusp is an $n$-gon of rational curves. Fixing a cusp in each of the two~$\oa[1](n)$, factors we label the corresponding $n$-gons by $\ell^k_j$ where $j\in \{0, \dots , n-1\}$ is to be understood cyclically and $k\in \{1,2\}$. There is a unique component $D_1$ of the boundary $\partial\oa{}(n)$ that contains $V_i(n)$.  The class of $D_1$ restricted to itself is minus double the universal theta divisor trivialized along the zero section, and thus $D_1|_{V_i(n)}=-2(T_1+T_2)$ where we also use $T_i$ for the class
on the level cover.  Let $D_2$ be another boundary component that intersects $V_i(n)$. Then this intersection $D_2 \cap V_i(n)$  is of the form $\ox[1](n) \times \ell_j^2$ or $\ell_m^1  \times \ox[1](n)$. Adding these up, we obtain the classes of the products
with the degenerate fibers, which is just $12L_1 + 12L_2$. Hence finally
\begin{equation}\label{equ:DrestV}
D|_V= -2(T_1+T_2) + 12(L_1 + L_2)
\end{equation}
and therefore
\begin{equation}\label{equ:MrestV}
M|_V= 2(T_1+T_2).
\end{equation}

In order to compute $\beta_2|_V$ we have to understand the pushforward of the intersection $\sum_{i<j}D_iD_j$ on a level cover to $V$.
There are three possible combinations of pairs $D_i, D_j$ which we have to take into account. The first is that one of the $D_i$ is the component $D_1$ containing $V_i(n)$. Fixing this and
summing over all other components gives a contribution $-2(T_1+T_2)\cdot 12(L_1+L_2)$, as computed above. The second possibility is that, say $D_2$, intersects $V_i(n)$ in $\ox[1](n) \times \ell_j^2$ and $D_3$ intersects $V_i(n)$ in $\ell_m^1  \times \ox[1](n)$.
Then the intersection of $D_2$ and $D_3$ is the surface $\ell_m^1 \times \ell_j^2$, and summing up over all possible such pairs of irreducible components of the boundary gives the product of the singular fibers, which is the class  $12L_1\cdot 12L_2$. Finally we could have that
$D_2$ intersects $V_i(n)$ in  $\ell_m^1  \times \ox[1](n)$ and that $D_3$ intersects $V_i(n)$ in  $\ell_k^1  \times \ox[1](n)$ such that $\ell_m^1$ and $\ell_k^1$ intersect on $V_i(n)$ in a point $P$. Then we obtain that geometrically
$D_2\cap D_3$ is $\{P\} \times \ox[1](n)$. Summing up over all such pairs of boundary components gives the class $12(L_1Z_1+L_2Z_2)$. Adding up the three possible types of intersections, we finally obtain
 \begin{equation}\label{equ:beta2restV}
\beta_2|_V=-24(T_1+T_2)(L_1+L_2) + 144L_1L_2 +  12(L_1Z_1+L_2Z_2).
\end{equation}

Denoting the generators of $\oE(V)$ by $S_1:=144L_1L_2$ (the class of a fiber over the most degenerate points), $S_2:= Z_1Z_2$ (the intersection of the two zero sections), $S_3:=12(L_1Z_1+L_2Z_2)$ (the class of a point on one factor times the other factor, and vice versa), and $S_4:=12(L_1Z_2+L_2Z_1)$ (the class of a fiber of the map to $\oa[1]$ on one factor, times the zero section on the other, and vice versa), we then compute the intersection numbers (all of them computed using Maple for safety):
\begin{prop}\label{prop:intersectionsV}
The intersection numbers of the generators $S_1,S_2,S_3,S_4$ of $\oE(V)$ with the pullback of a basis of $H^4(\oa)$ under $f$ are equal to
$$
  \begin{array}{r|cccc}
  &L^2&LM&M^2&\beta_2\\
  \hline\\[-2ex]
  S_1&0&0&1&0\\[0.5ex]
  S_2&\frac{1}{576}&0&0&\frac18\\[0.5ex]
  S_3&0&\frac{1}{24}&\frac{1}{12}&-\frac14\\[0.5ex]
  S_4&0&\frac{1}{24}&0&-\frac12\,.
  \end{array}
$$
\end{prop}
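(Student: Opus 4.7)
The plan is to expand each of the pulled-back classes $L^2|_V$, $LM|_V$, $M^2|_V$, $\beta_2|_V$ in the distinguished basis $\{L_1L_2,\,T_1T_2,\,L_1T_1+L_2T_2,\,L_1T_2+L_2T_1\}$ of $H^4(V)$, and then read the pairings off from the diagonal intersection matrix already computed in the proof of \Cref{prop:effV}.

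Using $L|_V=L_1+L_2$ together with $L_i^2=0$ (the pullback from $\ox[1]$ of $L^2=0$), we immediately get $L^2|_V=2L_1L_2$. From $M|_V=2(T_1+T_2)$, see~\eqref{equ:MrestV}, we obtain
$$LM|_V=2(L_1T_1+L_2T_2)+2(L_1T_2+L_2T_1),\qquad M^2|_V=4(T_1^2+T_2^2)+8\,T_1T_2.$$
To rewrite $M^2|_V$ in our basis we invoke the fact that $H^4(\ox[1])=\RR$ together with $T^2=LT=1/24$ from \Cref{prop:intersectionX1}, whence $T_i^2=L_iT_i$ as classes pulled back to $V$, and therefore $M^2|_V=4(L_1T_1+L_2T_2)+8\,T_1T_2$. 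Finally, since $Z_i=T_i-L_i$ and $L_i^2=0$, we have $L_iZ_i=L_iT_i$, so that expanding~\eqref{equ:beta2restV} yields
$$\beta_2|_V=144\,L_1L_2-12(L_1T_1+L_2T_2)-24(L_1T_2+L_2T_1).$$

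With these four expansions in hand, the table follows by inspection from the intersection matrix of \Cref{prop:effV}: each of the effective generators pairs non-trivially with exactly one basis element of $H^4(V)$, namely $S_1=144L_1L_2$ with $T_1T_2$ (giving $144/1152=1/8$), $S_2=Z_1Z_2$ with $L_1L_2$ (giving $1/1152$), $S_3=12(L_1Z_1+L_2Z_2)$ with $L_1T_1+L_2T_2$ (giving $12/576=1/48$), and $S_4=12(L_1Z_2+L_2Z_1)$ with $L_1T_2+L_2T_1$ (giving $12/576=1/48$). Plugging these four values into the expansions above produces all sixteen entries of the table, e.g.\ $S_1\cdot M^2|_V=8\cdot 144\cdot(1/1152)=1$ and $S_3\cdot M^2|_V=4\cdot(1/48)=1/12$.

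The computation itself is straightforward arithmetic; the only nontrivial observation is the two identifications $T_i^2=L_iT_i$ and $L_iZ_i=L_iT_i$ in $H^4(V)$, both of which rely on the one-dimensionality of $H^4(\ox[1])$ and on the specific numbers in \Cref{prop:intersectionX1}. The stacky factor $1/2$ coming from the $S_2$-quotient defining $V$ is already built into the intersection matrix of \Cref{prop:effV}; any slip in this factor would rescale the whole table by $2$, so the integrality of the entry $S_1\cdot M^2|_V=1$ serves as a useful internal consistency check.
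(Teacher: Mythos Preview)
Your proof is correct and follows the same strategy as the paper---compute using the restriction formulas \eqref{equ:MrestV}, \eqref{equ:beta2restV} and the intersection theory on $V$---but the paper itself does not spell out the argument (it merely records that the numbers were checked in Maple). Your presentation is in fact cleaner than a brute-force expansion: by first rewriting each pulled-back class in the nef basis $\{L_1L_2,\,T_1T_2,\,L_1T_1+L_2T_2,\,L_1T_2+L_2T_1\}$ and then exploiting the diagonality of the intersection matrix in \Cref{prop:effV}, you reduce the sixteen entries to four elementary pairings, and the identities $T_i^2=L_iT_i$ and $L_iZ_i=L_iT_i$ in $H^4(V)$ are exactly the right tool for this rewriting.
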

By inspection, we see that geometrically the images of these surfaces are
$$
  f(S_2)=S_A,\quad f(S_3)=S_D,\quad\mathrm{ and } \quad f(S_4)=S_F\,
$$
where by these equalities we mean reduced images, i.e.~the classes will be proportional up to an overall common factor. To understand these factors we first remark that the surfaces $S_i$ are all contained in the fiber over a torus rank 1 cusp.
The above computations take the involution $- \operatorname{id}_{\Sp(4,\ZZ)}$ into account where $\Sp(4,\ZZ)$ is contained in the stabilizer of this cusp. However, there is another involution  $\iota$ in $\Sp(4,\ZZ)$ such that the
product of $\iota$ and $- \operatorname{id}_{\Sp(4,\ZZ)}$ is equal to $- \operatorname{id}_{\Sp(6,\ZZ)}$ (see also the discussion in \Cref{rem:stacky12}).
Hence, in order to interpret the above numbers in $\oa$, they all have to be divided by a factor 2. The numbers thus obtained for the surfaces $S_A$ and $S_D$ that are the images of $S_2$ and $S_3$ then agree with the intersections computed in \Cref{prop:intersections}.  The reason why there is a further factor $1/2$ for the class of $f(S_4)$ compared to the class of $S_F$ computed  in \Cref{prop:intersections} lies in our definition of the surface $S_F$ which was defined as the image of  the irreducible
surface $\oa[1] \times C_F$ in $\oa[1] \times \ab[2]$ under the map   $N:\oa[1] \times \oa[2] \to \oa,$ whereas $S_4$ is represented by a reducible surface on $V$, with two components, each of which is mapped onto $S_D$.
Finally, the surface $S_1$ is the class of a generic fiber of the map $\ox\to\oa[2]$, and from \Cref{prop:intersections} it follows that $2[f(S_1)]=12[\osi{C4}]+3[\osi{K3+1}]$. While the surface $S_1$ is thus clearly not extremal effective on $\oa$, in the next section we will be able to use the above to prove extremality of surfaces $S_D$ and $S_F$ on $\oa$.

\section{The geometry of the stratum $\osi{K3}$}\label{sec:osiK3}
In this section we will discuss the rich geometry of the closed stratum $\osi{K3}\subset\oa$. There are three reasons for doing this.
Firstly, this is a threefold of independent interest, with rewarding and intricate geometry. Secondly, we need some of these results to prove in \Cref{sec:extremality} that the boundary strata $\osi{K3}$ and $\osi{C4}$ are extremal effective, and thirdly this allows us to recompute the homology classes of the surfaces $S_D$ and $\osi{K3+1}$, which can be represented by effective surfaces contained in $\osi{K3}$, thus reproving parts of \Cref{prop:intersections}. Indeed, the stratum $\osi{K3+1}$ is contained in $\osi{K3}$ by definition, while after discussing the geometry of $\osi{K3}$ we will be able to see that the surface $S_D$ is homologous to a surface contained in $\osi{K3}$.

Consider the fiber product   $\oub(n):=\ox[1](n) \times_{\oa[1](n)}\ox[1](n)$, together with the action of the group
$H(n):=G(n) \times_{\SL(2,\ZZ/n\ZZ)} G(n)$, which  makes sense as the group $G(n)$ has a natural projection onto $\SL(2,\ZZ/n\ZZ)$. Note that we can compute
\begin{equation}\label{equ:orderHn}
|H(n)|=2n^5\,t(n)\,.
\end{equation}
To simplify notation we write $\oub := \oub(1)$.

We also note that $H(n)$ contains the element $(-1,-1)$ which acts as the Kummer involution on the fibers, but it does not contain the involutions which act as Kummer involution on one factor of the fiber, but not on the other. The reason we consider $H(n)$, rather than the full symmetry group, is the relationship between $\oub(n)$ and the toroidal compactification $\oa[3]$. As we shall see, a small blow-up of  $\oub(n)$, modulo the action of $H(n)$, is mapped to the stratum $\sigma_{K3}$. The Kummer involution then corresponds to the symmetry $(x_1,x_2) \mapsto (-x_1,-x_2)$ of $\sigma_{K3}$, whereas the Kummer involution on, say the first factor only, corresponds to $(x_1,x_2) \mapsto (-x_1,x_2)$  which is not an element in the automorphism group of $\sigma_{K3}$.

The fiber product $\oub(n)$ has $A_1$-singularities at all the points of the form $(P_1,P_2)$ where $P_1$ and $P_2$ are singular points on the fiber over a cusp. This follows by an easy local computation. Indeed, the projections on each of the fibrations $\ox[1](n) \to \oa[1](n)$ are locally, near a singular point of an $n$-gon, given by $(x,y) \mapsto xy=t$ and
$(u,v) \mapsto uv=t$ respectively, such that the fiber product is given by $xy-uv=0$.
There are $n^2$ nodes over each cusp of $\oa[1](n)$ and hence $n^2\,t(n)$ nodes in total. The group~$H(n)$ acts transitively
on these  with stabilizers of order $2n^3$. The stabilizer group contains a conjugate of the Kummer involution and three cyclic groups $\ZZ/n\ZZ$. Two of these come from multiplication by $n$-th roots of unity on each of the components of the
$n$-gons meeting at these points, and the third is the stabilizer of the cusp in the base. We also note that the involution interchanges the two components of the $n$-gons meeting at such a point simultaneously and hence preserves the rulings of the
quadric $xy-uv=0$.

For each of the nodes on  $\oub(n)$ we can define two small resolutions.
Recall that these are given by blowing up a smooth surface going through such a node.
The two possible small resolutions
correspond to the two families of planes in a quadric cone given by the rulings of the quadric $xy-uv=0$. The total spaces thus obtained are related by an Atiyah flop.
By the above discussion we can choose one node and one of the two rulings associated to this node and then use the
group~$H(n)$ to define a small resolution for all nodes. Clearly, the group~$H(n)$ acts on this small resolution. This process gives us two possible small resolutions of $\oub(n)$.
These are isomorphic as abstract varieties, but not as resolutions. Indeed, if we add the involution which acts as the Kummer involution on one of the factors, but not on the other, it is easy to construct
an involution which fixes a node $(P_1,P_2)$ but interchanges the two rulings. We note, however, that the involution which we have just considered is not contained in $H(n)$.

To choose a small resolution we consider the diagonal $\Delta \subset  \oub$. Then $\Delta \cong \ox[1]$  is smooth and invariant under the Kummer involution acting diagonally on both factors, where both statements should be interpreted in the stack sense.
Hence, in the light of the above discussion,
this defines a small resolution of $\wub(n)$ of $\oub(n)$.  We shall here also use the shorter notation  $\wub:= \wub(1)$. We note that
the involutions which are the Kummer involution on one of the two factors, but not on the other,
do not act on $\wub(n)$. This fits in with the fact that
such an involution does not preserve the diagonal, but maps it to the antidiagonal.  We also note that the strict transform $\wDelta$ in $\wub$ is again smooth and that it is the blow-up of $\Delta$ in the node, again in the stack sense.
We denote the inverse image of $\Delta$ in $\oub(n)$ by $\Delta_n$ and its inverse image in $\wub(n)$ by  $\wDelta_n$ respectively. We note that $\Delta_n$ and  $\wDelta_n$ both consist of
$n^2$ irreducible components and that each component of $\wDelta_n$ is the blow-up of the corresponding component of $\Delta_n$ in the nodes of $\oub(n)$ through which this component passes. Every component of $\Delta_n$ contains~$n\,t(n)$ such nodes and through any such node $n$ components of  $\Delta_n$ pass. These statements follow from an easy counting argument using that the stabilizer of a component of $\Delta_n$ in $H(n)$ is isomorphic to~$G(n)$ and the description of the stabilizers of the nodes of $\oub(n)$ which we gave earlier.

Finally we remark that the small resolution $\wub(n)$ is projective. This follows for example from \Cref{lm:osiK3geometry}  which shows that there is a finite map from $\oub(n)$ to the stratum $\si{K3}$ contained in the projective variety $\oa[3]$.

We shall now define various line bundles on  $\oub(n)$  and $\wub(n)$ respectively, whose intersection numbers we will compute.
Since $\oub(n)$ is a small resolution of  $\wub$ we can identify their Picard groups and we will, in order to simplify notation, use the same symbol for the corresponding line bundles on both spaces.
As usual, these are $\QQ$-line bundles which pull back to honest line bundles on $\oub(n)$ for
$n \geq 3$. If $N$ is such a line bundle, we will denote its pullback to the level~$n$ cover by $N_n$. As before, we will compute the intersection numbers by computing them on a level cover $\wub(n)$ and then dividing by the order of the
group $H(n)$.

As before we denote by $L$ the Hodge line bundle on $\oub$, which is a pullback from the base $\oa[1]$. Further, let $Z_i$, for $i=1,2$ be the pullbacks of the $0$-sections on the two factors of $\oub$, and denote by~$T_i, i=1,2$
the pullbacks of the theta divisors trivialized along the $0$-sections from the two factors.

\begin{prop}\label{prop:rankPictildeY}
The Picard group  $\Pic_\RR(\wub)$ has rank $4$ and is generated by $L,Z_1,Z_2$ and $\wDelta$. Moreover, $\Pic_\RR(\wub)\cong H^2(\wub,\RR)$.
\end{prop}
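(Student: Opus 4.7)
The strategy is to pass to a level-$n$ cover $\wub(n)$ with $n\ge 3$, where all four classes become honest line bundles and where the Galois group $H(n)$ acts. Since $\wub$ is the stack quotient $[\wub(n)/H(n)]$, one obtains $\Pic_\RR(\wub) \cong \Pic_\RR(\wub(n))^{H(n)}$ and $H^2(\wub,\RR) \cong H^2(\wub(n),\RR)^{H(n)}$. The problem is thereby reduced to a statement about a smooth projective threefold fibered by abelian surfaces over the modular curve $\oa[1](n)$.

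The first step is linear independence of $L, Z_1, Z_2, \wDelta$. Restricting to a generic fiber of the composition $\wub \to \oa[1]$, which is a product $E \times E$ with $E$ a very general elliptic curve, the classes $Z_1, Z_2, \wDelta$ restrict to the two rulings and the diagonal, and these are well known to be linearly independent in $\operatorname{NS}(E\times E)\cong\ZZ^3$. The Hodge class $L$ pulls back from $\oa[1]$ and so restricts trivially to every fiber, making it linearly independent from the other three; its nontriviality in $\Pic_\RR(\wub)$ follows since $L$ has positive degree on any horizontal curve.

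The second step is to show that these four classes span. I would apply a Shioda--Tate-style analysis to the fibration $f\colon \wub(n) \to \oa[1](n)$: its Picard group is generated by (i) pullbacks from the base, (ii) the N\'eron--Severi of a general fiber, (iii) components of singular fibers, and (iv) the Mordell--Weil group of translations by $n$-torsion. Averaging by $H(n)$ collapses each piece: the $(\ZZ/n\ZZ)^{\oplus 4}$ subgroup acting by translations kills (iv) entirely and identifies the components of singular fibers in (iii) with rational multiples of the generic fiber class, itself $12L$ on the base; the diagonal Kummer involution $(-1,-1)$ preserves (ii), which contributes exactly $Z_1, Z_2, \wDelta$; and (i) contributes only $L$. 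The bookkeeping of the various cyclic subgroups acting simultaneously on the components of the singular fibers is the main technical point here, and one needs to use the description of the stabilizers of the nodes of $\oub(n)$ given in the preceding discussion.

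Finally, to obtain $\Pic_\RR(\wub) \cong H^2(\wub,\RR)$ it suffices to show that the $H(n)$-invariant part of $h^{2,0}(\wub(n))$ vanishes. Applying the Leray spectral sequence for $f$, the $(2,0)$-part of $H^2(\wub(n))$ is built from $H^0(R^2 f_*\calO) = H^0(L_n^{-2}) = 0$, from $H^2(f_*\calO) = H^2(\calO_{\oa[1](n)}) = 0$ for dimension reasons, and from $H^1(R^1 f_*\calO)$. The last piece is two copies of the weight-$1$ Hodge bundle, and the diagonal element $(-1,-1)\in H(n)$ acts as multiplication by $-1$ on each copy, so the $H(n)$-invariant subspace vanishes. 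I expect this last vanishing and the tracking of the small-resolution contribution in step two to be the main technical hurdles; the small resolution does not introduce any new line bundle classes beyond $\wDelta$ because $\wub(n) \to \oub(n)$ is crepant and contracts no divisors.
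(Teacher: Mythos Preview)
Your proposal is correct, but the paper's proof is considerably more direct on both counts.

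For the Picard rank, the paper works entirely at level~$1$, never passing to a cover. The key observation that bypasses your whole Shioda--Tate averaging argument is that \emph{every} fiber of $\wub\to\oa[1]$, including the one over the cusp, is irreducible: at level~$1$ the fiber of $\ox[1]\to\oa[1]$ over the cusp is a single nodal cubic, so the fiber of the fiber product is irreducible, and the small resolution only inserts a curve, not a divisorial component. Hence any irreducible effective divisor on $\wub$ that is not a multiple of the fiber class $12L$ must dominate the base and therefore restrict nontrivially to the generic fiber $E\times E$; since $\operatorname{NS}(E\times E)$ for general $E$ is spanned by the restrictions of $Z_1,Z_2,\wDelta$, the four classes span. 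This completely avoids the bookkeeping of reducible singular fibers and Mordell--Weil contributions at level $n$ that you correctly identify as the main technical hurdle in your approach.

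For the isomorphism $\Pic_\RR\cong H^2$, the paper simply invokes the level~$3$ cover: $\ox[1](3)$ is the Hesse pencil, a rational elliptic surface, so $\wub(3)$ is a Schoen fiber product and one reads off $H^2(\wub(3),\calO_{\wub(3)})=0$ from~\cite{schoen}. Thus the full $H^{2,0}$ vanishes at level~$3$, not merely its invariant part, and no Leray or group-action argument is needed. Your Leray argument is fine in outline, but note that the degeneration behavior of $R^if_*\calO$ over the cusps of $\oa[1](n)$ would need to be checked; the paper's citation sidesteps this.

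In short: your route works, but the paper exploits two special features of low level (irreducible fibers at $n=1$, rationality of $\ox[1](3)$) to give a much shorter argument.
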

\begin{proof}
The general fiber of $\wub \to \oa[1]$ is a product $E \times E$ where $E$ is a general elliptic curve. The Picard group of $E \times E$ has rank $3$ (and is
generated by the factors and the diagonal). These divisors are restrictions of globally defined divisors, namely $Z_1, Z_2$ and $\wDelta$ respectively. Any irreducible effective divisor independent of these must be supported on a fiber. Since all fibers,
including the  fiber over the cusp, are irreducible and a multiple of $L$ the claim about the Picard group follows.

To prove the second statement it suffices to argue that for a smooth level cover the invariant part of $H^{2,0}(\wub(n))$ vanishes. Indeed, using the fact that $\ox[3] \to \oa[3]$ is the Hesse pencil, i.e. the projective plane blown up in $9$ points,
it is easy to check that $H^2(\wub(3), {\calO}_{\wub(3)})=0$, cf. \cite[Proof of Prop.~(7.1)]{schoen}.
\end{proof}

We have now defined all the relevant divisors and are ready to start the computation of the intersection numbers.
\begin{prop}\label{prop:tildeY}
The triple intersection numbers on $\wub$  are as follows:
$$
 \begin{array}{r|rrrrrr}
 &L&Z_1&T_1&Z_2&T_2&\wDelta\\
 \hline\\[-2ex]
 L.T_1=T_1^2=L.Z_1=-Z_1^2&0\ &0\ &0\ &\ \frac{1}{24}&\ \frac{1}{24}&\ \frac{1}{24}\\[0.5ex]
 L.T_2=T_2^2=L.Z_2=-Z_2^2&0\ &\ \frac{1}{24}&\ \frac{1}{24}&0\ &0\ &\ \frac{1}{24}\\[0.5ex]
 L.\wDelta&0\ &\ \frac{1}{24}&\ \frac{1}{24}&\ \frac{1}{24}&\ \frac{1}{24}&0\ \\[0.5ex]
 Z_1.Z_2=Z_1.\wDelta=Z_2.\wDelta&\ \frac{1}{24}&-\frac{1}{24}&0\ &-\frac{1}{24}&0\ &-\frac{1}{24}\\[0.5ex]
 \wDelta.\wDelta&0\ &-\frac{1}{24}&-\frac{1}{24}&-\frac{1}{24}&\ -\frac{1}{24}&-\frac{1}{2}
 \end{array}
$$
Here the equalities in the left-hand column are to be read as equalities of cycles in homology, and are part of the statement of the theorem.
\end{prop}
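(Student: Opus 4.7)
My plan is to prove the proposition in three stages. First, the cycle equalities in the left column are established by pullback from $\ox[1]$ (for rows 1--2) and by direct transverse-intersection arguments (for row 4). Second, the numerical intersections outside row~5 are computed by restriction to the appropriate subvariety, reducing everything to the intersection theory on $\ox[1]$ from \Cref{prop:intersectionX1}. Third, the most delicate entries in row~5 require a normal-bundle computation on the stacky blow-up $\wDelta = \op{Bl}_P(\Delta)$ at the node $P$, where a subtle factor of $\tfrac12$ arises from the Kummer stabilizer.

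For rows 1 and 2 of the left-column equalities, \Cref{prop:intersectionX1} gives the $0$-cycle identities $LT = T^2 = LZ = -Z^2 = \tfrac{1}{24}[pt]$ on $\ox[1]$, and pulling back under the first projection $\pi_1: \wub \to \ox[1]$ yields the codimension-two identities $LT_1 = T_1^2 = LZ_1 = -Z_1^2 = \tfrac{1}{24}[F_1]$ on $\wub$, where $F_1 \cong E_\tau$ is a generic fiber of $\pi_1$; row 2 is symmetric. For row 4, the three classes $Z_1Z_2$, $Z_1\wDelta$ and $Z_2\wDelta$ are transverse intersections cutting out the common zero section $\sigma_0: \oa[1] \hookrightarrow \wub$.

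For the numerical entries outside row 5, I would intersect with the appropriate restriction. Rows 1--2 become intersections of $[F_1] \cong E_\tau$ with each divisor: $F_1$ is contracted under $\wub \to \oa[1]$ so $F_1 \cdot L = 0$, it avoids $Z_1, T_1$ in a generic fiber, and it meets $Z_2, T_2, \wDelta$ transversely in a single point each with trivial stabilizer. Row 3 restricts $L\wDelta$ to $\wDelta \cong \op{Bl}_P(\ox[1])$, where away from the exceptional curve $E$ the classes $L, Z_i, T_i$ pull back to the corresponding classes on $\ox[1]$; the entries then follow from \Cref{prop:intersectionX1}. Row 4 restricts to $\sigma_0 \cong \oa[1]$; the $\wDelta$-column uses the excess-intersection formula $\sigma_0 \cdot \wDelta = \int_{\sigma_0} c_1(N_{\wDelta/\wub})|_{\sigma_0}$ since $\sigma_0 \subset \wDelta$.

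The main obstacle is row 5, which hinges on computing $N_{\wDelta/\wub}$ globally on $\wDelta$ and handling the stacky self-intersection of $E$. Away from $E$, one has $N_{\wDelta/\wub} = N_{\Delta/\oub} = T_{\ox[1]/\oa[1]}$ with $c_1 = -L$ by the standard relative-dualizing identity $\omega_{\ox[1]/\oa[1]} = \pi^*L$ for elliptic surfaces. Along $E$, the sequence $0 \to N_{E/\wDelta} \to N_{E/\wub} \to N_{\wDelta/\wub}|_E \to 0$, combined with $N_{E/\wub} = \calO(-1)^{\oplus 2}$ from the small resolution of an ordinary node and $N_{E/\wDelta} = \calO(-1)$ for the exceptional $(-1)$-curve of the blow-up, gives $N_{\wDelta/\wub}|_E = \calO(-1)$. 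Patching, $c_1(N_{\wDelta/\wub}) = -\sigma^*L + E$ on $\wDelta$, where $\sigma: \wDelta \to \Delta$ denotes the blow-up. Each row-5 entry is then $\int_\wDelta (-\sigma^*L + E) \cdot X$, which using $(\sigma^*L)^2 = \sigma^*L \cdot E = 0$ collapses either to an intersection number on $\ox[1]$ via \Cref{prop:intersectionX1} or to the single computation $E^2 = -\tfrac12$ on $\wDelta$. The stacky value $-\tfrac12$ (rather than the naive $-1$) reflects the $\{\pm 1\}$ Kummer automorphism group at $P$; I would confirm it independently by the level-cover approach of \Cref{sec:ox1}, where $\wDelta$ pulls back to a reduced divisor with $n^2$ irreducible components pairwise meeting along shared small-resolution $\PP^1$'s at common nodes, and the combinatorial computation of $\wDelta_n^3$ divided by $|H(n)| = 2n^5 t(n)$ yields the same answer.
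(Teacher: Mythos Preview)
Your approach is essentially the paper's, reorganized around the normal bundle $N_{\wDelta/\wub}$: the paper also reduces rows 1--4 to \Cref{prop:intersectionX1} (via pullback or via the identification of $Z_1Z_2$, $Z_1\wDelta$, $Z_2\wDelta$ with the zero section), and for $\wDelta^3$ it derives by adjunction exactly your formula $\wDelta_{n,i}|_{\wDelta_{n,i}}=-L+\sum_\ell E_\ell$ on each component of a level cover, then carries out the combinatorial count you describe.

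One caution on row 5. Your normal-bundle exact sequence along $E$ computes $N_{\wDelta_{n,i}/\wub(n)}|_{E_\ell}=\calO(-1)$ for a \emph{single} irreducible component of the level-$n$ pullback, i.e.\ it gives $c_1\bigl(N_{\wDelta_{n,i}/\wub(n)}\bigr)=-L+\sum_\ell E_\ell$ on $\wDelta_{n,i}$, matching the paper's adjunction formula. But $\wDelta^3$ is $\frac{1}{|H(n)|}\bigl(\sum_i\wDelta_{n,i}\bigr)^3$, and the cross-terms $\wDelta_{n,i}\wDelta_{n,j}\wDelta_{n,k}$ are essential: at each of the $n^2t(n)$ nodes, $n$ components share the exceptional $\PP^1$, so one gets $n^3$ ordered triples per node, not $n$. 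This is why the answer is $-\tfrac12$ and not $-\tfrac{1}{2n^2}$ (the latter being what one obtains from $\sum_i(\wDelta_{n,i})^3$ alone). So the quick attribution of the $-\tfrac12$ to the Kummer $\ZZ/2$ at $P$ is not the real mechanism; the correct justification is precisely the level-cover count you sketch at the end, which is the paper's \Cref{lm:Delta3}. Equivalently, if you insist on working intrinsically on the stack $\wDelta$, then the line bundle $\calO_{\wub}(\wDelta)|_{\wDelta}$ restricted to $\wDelta_{n,0}$ is $-L+n\sum_\ell E_\ell$ (picking up one copy of $E_\ell$ from each of the $n$ sheets through that node), and squaring and dividing by $|G(n)|=2n^3t(n)$ again yields $-\tfrac12$.
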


We will split up the  proof of this proposition into several  lemmas. We also remark that the two top rows in the left-hand-column are pullbacks of cycles from a factor  $\ox[1]$, in other words equalities of  intersection numbers on this surface.

\begin{lm}
We have
\begin{equation*}
LT_1^2=LT_2^2=LZ_1^2=LZ_2^2=0\,.
\end{equation*}
\end{lm}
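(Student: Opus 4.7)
The plan is a straightforward dimension count. Each of the four claimed intersection numbers is a triple product on the threefold $\wub$ in which all three divisor factors are pulled back from a single factor surface $\ox[1]$, and a triple product of divisors on a complex surface is automatically zero.

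More precisely, I would argue as follows. The two projections $p_i : \oub \to \ox[1]$ from the fiber product compose with the small resolution $\pi : \wub \to \oub$ to give maps $q_i := p_i \circ \pi : \wub \to \ox[1]$. By the very definitions of the classes given just before the proposition, $T_i = q_i^{*} T$ and $Z_i = q_i^{*} Z$, while the Hodge class $L$ on $\wub$ is pulled back from $\oa[1]$ and hence equals $q_i^{*} L$ for either $i$ (the Hodge class on $\ox[1]$ itself being a pullback from $\oa[1]$). Consequently
$$
L\,T_1^{2} \;=\; q_1^{*}(L\cdot T^{2}), \qquad L\,Z_1^{2} \;=\; q_1^{*}(L\cdot Z^{2}),
$$
as elements of $H^{6}(\wub)$. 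Since $\dim_{\CC}\ox[1]=2$, we have $H^{6}(\ox[1])=0$, so both classes vanish. The analogous argument with $q_2$ in place of $q_1$ gives $L T_2^{2} = L Z_2^{2} = 0$.

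There is no real obstacle here: the only point worth checking is that the small resolution $\pi$ plays no role in the argument. This is immediate, because all the divisor classes involved are pulled back from $\oub$ (indeed from $\ox[1]$), so their intersection numbers agree whether computed on $\wub$ or on $\oub$, and the vanishing reduces to the trivial fact that triple intersection products on a complex surface vanish.
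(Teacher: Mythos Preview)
Your proof is correct and takes essentially the same approach as the paper: both reduce the vanishing to the trivial dimension count that a triple intersection of divisors on the surface $\ox[1]$ is zero. The only cosmetic difference is in how the passage from $\oub$ to $\wub$ is handled---the paper represents $12L$ by a generic fiber disjoint from the exceptional locus, whereas you invoke functoriality of cohomological pullback along $q_i$; both are valid and amount to the same thing.
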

\begin{proof}
On $\oub$ this follows by dimension reasons since all of these intersections are pullbacks of an intersection of three divisors on a surface. Note, moreover, that the class $12L$ on $\overline{\calA_1}$ is the class of a point, and thus $12L$ on $\oub$ can be represented as the fiber over a generic point of $\calA_1$. Thus the above intersections are supported away from the exceptional locus of the small resolution and hence the intersection numbers remain unchanged when pulled back to $\wub$.
\end{proof}

\begin{lm}\label{lem:intersection1}
The following holds:
\begin{equation*}
LT_1T_2=LZ_1Z_2=LT_1Z_2=LZ_1T_2=\frac{1}{24}\,.
\end{equation*}
\end{lm}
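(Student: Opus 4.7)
The plan is to reduce all four intersection numbers to a single computation of $LZ_1Z_2$ and then compute that directly. Since $L$ is pulled back from the one-dimensional stack $\oa[1]$ via the structure morphism $\wub\to\oa[1]$, we have $L^2=0$ in $H^4(\wub)$. The relation $T=Z+L$ on $\ox[1]$ from \Cref{prop:intersectionX1} pulls back under the two projections $\pi_i:\wub\to\ox[1]$ to give $T_i=Z_i+L$ for $i=1,2$. Expanding and discarding every term containing $L^2$, one obtains
\[
LT_1T_2 \;=\; L(Z_1+L)(Z_2+L) \;=\; LZ_1Z_2,
\]
and, similarly, $LT_1Z_2 = LZ_1T_2 = LZ_1Z_2$, so all four intersections agree.

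Next I would identify the cycle $Z_1\cap Z_2$ geometrically. A point of the fiber product $\oub$ lies in this intersection precisely when both of its components are the zero element of the common fiber of $\ox[1]\to\oa[1]$, so $Z_1\cap Z_2$ is the \emph{doubly-zero section}, mapping isomorphically onto the base $\oa[1]$. In local fiber-product coordinates $(t,u,v)$ near a generic point, $Z_1=\{u=0\}$ and $Z_2=\{v=0\}$ meet transversely, hence $Z_1\cdot Z_2 = [\oa[1]]$ as a class. Moreover, since the zero section of the universal elliptic fibration meets every cuspidal fiber at a single smooth point (the identity of the $n$-gon on the Shioda model), the doubly-zero section avoids the nodes of $\oub$ and the computation is unchanged by the small resolution $\wub\to\oub$.

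Finally, since $L$ is the pullback of the Hodge bundle $L_1$ on $\oa[1]$ under the structure map, and the doubly-zero section is a section of this map, restriction identifies $L|_{Z_1\cap Z_2}$ with $L_1$, whose degree on $\oa[1]$ is $1/24$. Therefore $LZ_1Z_2=1/24$, and all four intersection numbers equal $1/24$ as claimed. The main subtlety I anticipate is confirming that the doubly-zero section genuinely avoids the singular locus of $\oub$, so that the computation can be carried out on $\oub$ without correction terms from the small resolution; but this is immediate from the standard description of the torsion sections on the Shioda modular surfaces recalled in \Cref{sec:ox1}.
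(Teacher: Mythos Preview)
Your reduction step---using $T_i=Z_i+L$ and $L^2=0$ to collapse all four intersections to $LZ_1Z_2$---is exactly what the paper does. Your computation of $LZ_1Z_2$, however, proceeds differently. The paper works on a level-$n$ cover: it counts the $n^4$ intersection points of $Z_1(n)$ and $Z_2(n)$ on a generic fiber $E\times E$, multiplies by the $n\,t(n)$ fibers in $F_n$, and divides by $|H(n)|=2n^5t(n)$, obtaining $Z_1Z_2F=\tfrac12$ and hence $LZ_1Z_2=\tfrac1{24}$. You instead identify $Z_1\cap Z_2$ intrinsically as the zero section $\oa[1]\hookrightarrow\wub$ and invoke the known degree $\deg L_1=\tfrac1{24}$. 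Your argument is more direct and avoids the bookkeeping with $H(n)$; the paper's approach has the advantage of being uniform with the other computations in that section (and with \Cref{equ:degreeZonF}), which are all carried out by the same level-cover-and-divide recipe. The observation that the doubly-zero section misses the nodes of $\oub$---so the small resolution is irrelevant here---is correct and is also used implicitly later in the paper (see the proof of \Cref{equ:cycles2}).
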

\begin{proof}
By \Cref{prop:intersectionX1}  we know that $T_i=Z_i+L$, for $i=1,2$. Together with $L^2=0$ this shows that we only have to calculate one of these intersection numbers. We shall compute $LZ_1Z_2$, as this intersection
is easy to understand geometrically. First, we use that $F=12L$. We now work on a level cover and consider a general fiber of $\ox[1](n) \times_{\oa[1](n)}\ox[1](n)\to\oa[1](n)$, which is of the form
$E \times E$ where $E$ is an elliptic curve. The surface $Z_1(n)$ intersects this in $E[n] \times E$ where $E[n]$ denotes the group of $n$-torsion points on $E$. Similarly  $Z_2(n)$ intersects this in $E \times E[n]$.
Hence $Z_1(n)$ and $Z_2(n)$ intersect on a general fiber in $n^4$ points. Using~\eqref{equ:orderHn} this gives
\begin{equation*}
Z_1Z_2L=\frac{1}{12}Z_1Z_2F= \frac{1}{12\,|H(n)|}\cdot n^4 \cdot n\,t(n)= \frac{1}{24}\,.
\end{equation*}
\end{proof}

In order to simplify some arguments later we note the following equalities
\begin{lm}\label{lem:cycles1}
The following equalities of cycles hold in homology:
\begin{equation*}
LT_i=T_i^2=LZ_i=-Z_i^2\,.
\end{equation*}
\end{lm}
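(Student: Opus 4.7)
My plan is to reduce all four equalities to two basic identities on $\wub$, namely $L^2=0$ and $T_i\cdot Z_i=0$ in $H^4(\wub)$, and then to derive the chain of equalities by elementary algebra using the pulled-back relation $T_i=Z_i+L$ from \Cref{prop:intersectionX1}.

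For the first identity, I would argue by dimension: the class $L$ on $\wub$ is the pullback of the Hodge class from the one-dimensional base $\oa[1]$, so $L^2$ is pulled back from $H^4(\oa[1])=0$ and hence vanishes.

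For the second identity, I would work on $\ox[1]$ and pull back. By the very definition of $T$ as the theta divisor trivialized along the zero section $Z$, the restriction $\calO_{\ox[1]}(T)|_Z$ is the trivial line bundle on $Z\cong\oa[1]$. Since $H_0(\ox[1],\RR)=\RR$ is detected by degree, this forces $T\cdot Z=0$ as a class in $H^4(\ox[1])$. Now the factor projection $\oub\to\ox[1]$ composes with the small resolution $\wub\to\oub$ to give a morphism $p_i\colon\wub\to\ox[1]$, under which $p_i^*L=L$, $p_i^*Z=Z_i$ and $p_i^*T=T_i$. Since pullback is a ring homomorphism on cohomology, we conclude $T_i\cdot Z_i=p_i^*(T\cdot Z)=0$.

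Granted $L^2=0$ and $T_iZ_i=0$, the four equalities follow by direct expansion:
\begin{align*}
LT_i &= L(Z_i+L) = LZ_i + L^2 = LZ_i,\\
T_i^2 &= T_i(Z_i+L) = T_iZ_i + LT_i = 0 + LZ_i = LZ_i,\\
Z_i^2 &= Z_i(T_i-L) = Z_iT_i - LZ_i = -LZ_i.
\end{align*}
The only subtlety I foresee is making sure that $p_i$ genuinely extends as a morphism across the small resolution (so that the projection formula and the ring-homomorphism property of $p_i^*$ apply without caveat); this is automatic from the construction of $\wub$, but should be explicitly remarked. Apart from that, the statement is essentially formal once the defining property of $T$ is translated into the homological vanishing $T\cdot Z=0$.
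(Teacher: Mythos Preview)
Your proof is correct and follows essentially the same approach as the paper, which dispatches the lemma in one line: ``This follows from $Z_i=T_i-L$ and $L^2=0$.'' In fact your version is more complete, since you make explicit the third ingredient $T_iZ_i=0$ (pulled back from $TZ=0$ on $\ox[1]$, established in \Cref{prop:intersectionX1}) that the paper's sentence leaves implicit but is needed to separate $T_i^2$ from $Z_i^2$.
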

\begin{proof}
This follows from $Z_i=T_i-L$ and $L^2=0$.
\end{proof}

\begin{lm}\label{lem:equintersection2}
We have the following intersection numbers
\begin{equation*}
LT_1\wDelta=LT_2\wDelta=LZ_1\wDelta=LZ_2\wDelta=\frac{1}{24}\,.
\end{equation*}
\end{lm}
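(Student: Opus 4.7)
My plan is to first reduce the four assertions to a single intersection number, and then establish that number by restricting to the surface $\wDelta$.

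By \Cref{prop:intersectionX1} we have $T_i=Z_i+L$, and $L^2=0$ since $L$ is pulled back from the one-dimensional $\oa[1]$. Hence
\[
LT_i\wDelta=LZ_i\wDelta+L^2\wDelta=LZ_i\wDelta
\]
for $i=1,2$. Moreover, the involution $\tau$ of $\wub$ swapping the two factors fixes $L$ and $\wDelta$ --- the diagonal is visibly $\tau$-invariant, and as noted above this swap preserves the rulings of the quadric $xy-uv=0$ at the node and hence descends to $\wub$ --- while interchanging $Z_1\leftrightarrow Z_2$. It follows that $LZ_1\wDelta=LZ_2\wDelta$, so it suffices to establish $LZ_1\wDelta=\frac{1}{24}$.

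For this I would restrict the product $L\cdot Z_1$ to the surface $\wDelta$. As discussed before the lemma, $\wDelta$ is a blow-up $\pi\colon\wDelta\to\Delta\cong\ox[1]$ at the node of $\oub$ lying on $\Delta$, namely $(P,P)$, where $P$ is the node of the cuspidal fiber. Since $L$ is pulled back from $\oa[1]$, we have $L|_{\wDelta}=\pi^*L_1$, where $L_1$ denotes the Hodge class on $\ox[1]$. On the other hand, the $0$-section $\{0\}\times\ox[1]\subset\oub$ does not pass through the node $(P,P)$, because $0\neq P$ in $\ox[1]$, so the strict transform of $Z_1$ on $\wub$ restricts to $\wDelta$ as $\pi^*Z$, with no contribution from the exceptional curve of $\pi$. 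By the projection formula and \Cref{prop:intersectionX1} we obtain
\[
L\cdot Z_1\cdot\wDelta=\int_{\wDelta}\pi^*L_1\cdot\pi^*Z=\int_{\ox[1]}L_1\cdot Z=\frac{1}{24}.
\]

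The main obstacle is to verify carefully that the restriction of $Z_1$ to $\wDelta$ carries no hidden component supported on the exceptional curve of $\pi$, a point essential for reducing to a computation on $\ox[1]$. As an independent check, I would reprove the same number on a level cover by using $12L=F$ and computing $FZ_1\wDelta$ on a general fiber $E\times E$ of $\wub\to\oa[1]$: the restrictions of $Z_1$ and $\wDelta$ to this fiber are $\{0\}\times E$ and the diagonal respectively, intersecting in the single point $(0,0)$, which on the stack carries a factor $\frac{1}{2}$ arising from the generic $\{\pm 1\}$-stabilizer of $\oa[1]$ fixing $(0,0)$. This yields $FZ_1\wDelta=\frac{1}{2}$, and thus again $LZ_1\wDelta=\frac{1}{24}$.
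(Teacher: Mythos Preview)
Your argument is correct. Your main computation---restricting to the surface $\wDelta$, identifying it with the blow-up of $\ox[1]$ at the node, and invoking the projection formula together with \Cref{prop:intersectionX1}---is a genuinely different route from the paper's. The paper simply repeats the level-cover count from \Cref{lem:intersection1}: on a general fiber $E\times E$ of $\wub(n)\to\oa[1](n)$, the cycles $Z_1(n)$ and $\wDelta_n$ intersect in all $n^4$ points of $E[n]\times E[n]$, and one then divides by $|H(n)|$ exactly as before. Your restriction-to-$\wDelta$ argument is cleaner in that it avoids level-cover bookkeeping and reduces directly to the already-established table~\eqref{table:intersectionX(1)}; the paper's approach has the virtue of being uniform with the surrounding lemmas. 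Your ``independent check'' via $12L=F$ on a general fiber is in fact the paper's own method in stack language, so you have effectively given both proofs. One small quibble: the factor $\tfrac12$ in that check comes from the diagonal Kummer involution $(-1,-1)\in H(n)$ acting on the fiber $E\times E$, not from the stabilizer of a point of $\oa[1]$ as such; and the fact that the factor-swapping involution descends to $\wub$ is only spelled out later in the paper (in the proof of \Cref{lm:osiK3geometry}), though your justification via preservation of the diagonal ruling is correct.
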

\begin{proof}
Again, it is sufficient to prove one of these equalities. Indeed, the intersection of say $Z_1$ and $\wDelta$ on a general fiber $E \times E$ consists of all $n$-torsion points of this fiber, of which there are $n^4$. The rest of the proof
is analogous to the proof of \Cref{lem:intersection1}.
\end{proof}

\begin{lm}
The following holds:
\begin{equation*}
T_1^2\wDelta=T_2^2\wDelta=-Z_1^2\wDelta=-Z_2^2\wDelta=\frac{1}{24}\,.
\end{equation*}
\end{lm}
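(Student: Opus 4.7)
The plan is that this lemma should follow essentially immediately by combining the two preceding lemmas. Lemma \ref{lem:cycles1} asserts the equalities of cycle classes
$$
LT_i \;=\; T_i^2 \;=\; LZ_i \;=\; -Z_i^2
$$
in $H^\ast(\wub,\RR)$, for $i=1,2$. Thus, intersecting each of these four equal classes with the divisor $\wDelta$ gives the chain of equalities
$$
T_i^2\,\wDelta \;=\; L\,T_i\,\wDelta \;=\; L\,Z_i\,\wDelta \;=\; -Z_i^2\,\wDelta,
$$
so to prove the lemma it suffices to compute just one of these four numbers, for each $i=1,2$. The natural choice is $LZ_i\wDelta$ (or equivalently $LT_i\wDelta$), which is exactly what has been computed in Lemma \ref{lem:equintersection2} to equal $\frac{1}{24}$.

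So I would write the proof as a short invocation: apply Lemma \ref{lem:cycles1} to reduce all four asserted intersection numbers to $LT_i\wDelta$ (and to $LT_j\wDelta$ for the other index), then quote Lemma \ref{lem:equintersection2} to get the value $\frac{1}{24}$.

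The only potential wrinkle is checking that Lemma \ref{lem:cycles1} is being used on $\wub$ rather than on $\oub$. This is fine because the identities $T_i = Z_i + L$, $L^2=0$, and $Z_iT_i=0$ are pulled back from the factor $\ox[1]$ via the compositions $\wub \to \oub \to \ox[1]$, and pullback respects equalities in cohomology. Since \Cref{prop:rankPictildeY} identifies $\Pic_\RR(\wub)\cong H^2(\wub,\RR)$ and tells us precisely that $L,Z_1,Z_2,\wDelta$ generate, there is no hidden exceptional-divisor contribution to worry about in the relation $T_i = Z_i + L$. Hence the proof is complete with no genuine computation required beyond what has already been established; there is no real obstacle.
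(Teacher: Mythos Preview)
Your proof is correct and follows exactly the paper's approach: the paper's proof is the single sentence ``This is an immediate consequence of \Cref{lem:cycles1} and \Cref{lem:equintersection2}.'' Your additional remarks about why the identities pulled back from $\ox[1]$ remain valid on $\wub$ are a harmless elaboration, but not needed since \Cref{lem:cycles1} was already stated and proved in the $\wub$ setting.
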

\begin{proof}
This is an immediate consequence of \Cref{lem:cycles1} and \Cref{lem:equintersection2}.
\end{proof}

\begin{lm}
The following holds:
\begin{equation*}
\wDelta^2L=0\,.
\end{equation*}
\end{lm}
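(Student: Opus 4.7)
The plan is to compute the intersection by restricting to a general fiber of the fibration $\wub \to \oa[1]$. Since $12L = F$, where $F$ denotes the class of a fiber, we have $\wDelta^2 L = \tfrac{1}{12}\wDelta^2 F$, so it suffices to show that $\wDelta^2$ has degree zero on a general fiber. I choose to represent $F$ by a fiber lying over a non-cuspidal point of $\oa[1]$; such a fiber is a smooth product $E\times E$ for a generic elliptic curve $E$, and it is disjoint from all the nodes of $\oub$, hence from the exceptional locus of the small resolution $\wub\to\oub$. Consequently, the intersection number $\wDelta^2 F$ can be computed equivalently on $\oub$ or on $\wub$, and the restriction $\wDelta|_F$ is simply the ordinary diagonal in $E\times E$.

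The self-intersection of the diagonal on $E\times E$ equals the topological Euler characteristic of $E$, which is zero since $E$ is an elliptic curve. Therefore $(\wDelta|_F)^2 = 0$ on the smooth surface $F \cong E\times E$, and this immediately gives $\wDelta^2 F = 0$ and hence $\wDelta^2 L = 0$.

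For consistency with the other computations in the section one can also carry this out on a level cover: on $\wub(n)$ a general fiber of $\wub(n) \to \oa[1](n)$ is again a smooth product $E\times E$, disjoint from the exceptional locus and unaffected by the choice of small resolution, so the same diagonal self-intersection computation applies verbatim before dividing by $|H(n)|$.

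There is no real obstacle here: the key point is that $L$ is pulled back from the base, so $\wDelta^2 L$ localizes on a generic fiber where the delicate geometry introduced by the small resolution plays no role, and on that generic fiber the vanishing reduces to the classical fact $\Delta_{E\times E}^2 = \chi(E) = 0$.
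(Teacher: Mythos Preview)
Your proof is correct and follows essentially the same approach as the paper: represent $12L$ by a smooth fiber $E\times E$ away from the exceptional locus, and use that the diagonal in $E\times E$ has self-intersection zero. You spell out a bit more detail (the Euler characteristic interpretation, the irrelevance of the small resolution on a generic fiber) than the paper's one-line version, but the idea is identical.
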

\begin{proof}
Since $12L=F$, this means geometrically we can again represent $L$ by a multiple of a smooth fiber $E\times E$, for a smooth elliptic curve~$E$. But then the self-intersection of the diagonal in $E \times E$ is equal to $0$.
\end{proof}

\begin{lm}\label{equ:cycles2}
The following equality of cycles holds in homology:
\begin{equation*}
Z_1Z_2=Z_1\wDelta=Z_2\wDelta\,.
\end{equation*}
\end{lm}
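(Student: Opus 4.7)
The plan is to show that all three intersection products are represented by the same reduced cycle $C \subset \wub$, namely the image in $\wub$ of the map $\oa[1] \to \wub$ sending $E \mapsto (E, 0, 0)$. Set-theoretically, the equalities $Z_1 \cap Z_2 = Z_1 \cap \wDelta = Z_2 \cap \wDelta = C$ are immediate from the definitions: a point $(E, P, Q) \in \wub$ lies in any two of these three divisors if and only if $P = Q = 0$.

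The key step is then to check that each pairwise intersection is transverse of multiplicity one along $C$, so that each intersection cycle equals $[C]$ in $H_*(\wub)$. For this I would first argue that $C$ lies entirely in the smooth locus of $\oub$: the nodes of $\oub$ occur at points $(E_\infty, s, s')$ where $E_\infty$ is a cusp of $\oa[1]$ and $s, s'$ are nodes of the corresponding singular fiber of $\ox[1] \to \oa[1]$, whereas $C$ meets each fiber of $\oub$ in the neutral point $(0,0)$, which is smooth. Consequently the small resolution $\wub \to \oub$ is an isomorphism over a neighborhood of $C$, and in particular $\wDelta$ coincides with $\Delta$ near $C$.

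Working in local coordinates $(z, w, u)$ on $\oub$ around a point of $C$---with $z$ a uniformizer on $\oa[1]$ and $w, u$ fiber uniformizers on the two copies of $\ox[1]$ vanishing on their respective zero sections---one has $Z_1 = \{w = 0\}$, $Z_2 = \{u = 0\}$ and $\wDelta = \{u = w\}$. Each pair of these divisors cuts out the reduced ideal $(u, w)$, giving a transverse intersection of multiplicity one supported on $C$. Therefore all three intersection products equal $[C]$ in homology.

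The only obstacle to this plan is the verification that $\wDelta$ coincides with $\Delta$ near $C$, which amounts to locating precisely which points of $\oub$ are affected by the small resolution; but this follows directly from the description of the nodes of $\oub$ given earlier in the section, so in the end no new geometric input is needed.
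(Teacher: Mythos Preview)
Your proof is correct and follows essentially the same approach as the paper. The paper's argument is terser and is phrased on the level-$n$ cover (observing that all three intersections restrict, on every fiber including the singular ones, to the set of pairs of $n$-torsion points, and that these sections avoid the nodes and hence the small resolution), while you work directly at level~$1$ with explicit local coordinates to verify transversality; but the key geometric input---that all three pairwise intersections are supported on the same zero-section curve, which lies entirely in the locus where $\wub\to\oub$ is an isomorphism---is identical.
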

\begin{proof}
This is geometrically clear as these cycles restrict to pairs of $n$-torsion points on all fibers (including the singular ones), see our discussion on the geometry of the universal elliptic curve.
We also note that the $n$-torsion points define sections of the universal elliptic curve $\ox[1](n)\to\oa[1](n)$. In particular these sections do
not meet singular pints of fibers and hence are not affected by the small resolution.
\end{proof}

It is easy to check that we have now computed all intersection numbers in \Cref{prop:tildeY} with the exception of $\wDelta^3$,  which we will now compute.

\begin{lm}\label{lm:Delta3}
We have
\begin{equation*}
\wDelta^3= - \frac{1}{2}\,.
\end{equation*}
\end{lm}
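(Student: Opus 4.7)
The plan is to reduce $\wDelta^3$ to a self-intersection computation on the surface $\wDelta$ via adjunction, and then to carry out the remaining computation on a level cover.

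I first determine the dualizing sheaves. The fiber product $\oub = \ox[1] \times_{\oa[1]} \ox[1]$ is Gorenstein, and using $\omega_{\ox[1]/\oa[1]} = \pi^* L$ together with the fiber product formula for relative dualizing sheaves yields $K_{\oub} = \pi^*(K_{\oa[1]} + 2L)$; since the small resolution $\mu:\wub \to \oub$ is crepant (an ODP is replaced by a $\mathbb{P}^1$ with normal bundle $\calO(-1)\oplus\calO(-1)$), we get $K_{\wub} = \pi^*(K_{\oa[1]} + 2L)$. As already observed in this section, $f: \wDelta \to \Delta \cong \ox[1]$ is the blowup of $\ox[1]$ at the node $P$ (which is smooth on $\Delta$), with exceptional curve $E_0 \subset \wDelta$ equal to the exceptional $\mathbb{P}^1$ of the small resolution, so $K_{\wDelta} = f^* K_{\ox[1]} + E_0$. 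Substituting $K_{\ox[1]} = \pi^*(K_{\oa[1]} + L)$ and applying adjunction gives
\begin{equation*}
\wDelta|_{\wDelta} = K_{\wDelta} - K_{\wub}|_{\wDelta} = E_0 - L.
\end{equation*}
Therefore $\wDelta^3 = (E_0 - L)^2 = E_0^2$, using $L^2 = 0$ on $\wDelta$ (pulled back from $\ox[1]$) and $L \cdot E_0 = 0$ by the projection formula, since $E_0$ is contracted by $f$.

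To compute $E_0^2$ with the correct stacky factor, I work on a level cover $\wub(n)$ for $n \geq 3$, following the approach of \Cref{sec:ox1}. There $\wDelta_n$ has $n^2$ irreducible components $\wDelta_n^{(a,b)}$ indexed by $(a, b) \in (\ZZ/n\ZZ)^2$, each the blowup of $\ox[1](n)$ at the $nt(n)$ singular points of singular fibers. A local analysis at the node shows that components with distinct first shift $a$ are disjoint, while any two components $\wDelta_n^{(a,b)}$ and $\wDelta_n^{(a,b')}$ with common first shift meet transversally along all $nt(n)$ exceptional $\mathbb{P}^1$'s corresponding to that $a$. Writing $\Xi_a := \sum_b \wDelta_n^{(a,b)}$ we obtain $\wDelta_n^3 = \sum_a \Xi_a^3$. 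Adjunction applied to a single smooth component gives $\wDelta_n^{(a,b)}|_{\wDelta_n^{(a,b)}} = \sum_i E_{0,i} - L$, and combining with the shared-exceptional structure, $\Xi_a|_{\wDelta_n^{(a,b)}} = n \sum_i E_{0,i} - L$. A direct expansion using $E_{0,i}^2 = -1$, pairwise disjointness of the $E_{0,i}$, and $L \cdot E_{0,i} = L^2 = 0$ gives
\begin{equation*}
\Xi_a^3 = \sum_b \bigl(\Xi_a|_{\wDelta_n^{(a,b)}}\bigr)^2 = n\cdot\bigl(-n^3 t(n)\bigr) = -n^4 t(n),
\end{equation*}
so $\wDelta_n^3 = -n^5 t(n)$. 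Dividing by $|H(n)| = 2 n^5 t(n)$ yields $\wDelta^3 = -\tfrac{1}{2}$.

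The main obstacle is the stack factor of $\tfrac{1}{2}$: a naive computation would give $E_0^2 = -1$ as a standard $(-1)$-curve in the blowup of a smooth point, which would produce $\wDelta^3 = -1$. The correct factor of $\tfrac{1}{2}$ comes from the $\ZZ/2$-stabilizer at the node $P$ (the diagonal Kummer involution fixes $P$ and acts trivially on the exceptional $\mathbb{P}^1$); on the level cover, this factor emerges naturally both from the factor $2$ in $|H(n)|$ coming from the central involution and from the combinatorial interlocking of the $n$ components of $\Xi_a$ at each exceptional $\mathbb{P}^1$.
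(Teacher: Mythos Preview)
Your proof is correct and follows essentially the same approach as the paper: both arguments use adjunction to identify $\wDelta_{n,i}|_{\wDelta_{n,i}} = -L + \sum_\ell E_\ell$ on each component of the level cover, observe that $L^2 = L\cdot E_\ell = 0$ reduces the computation to exceptional-curve contributions, and then count $-n^5 t(n)$ before dividing by $|H(n)| = 2n^5 t(n)$. The only organisational difference is that you group the $n^2$ components by their cyclic shift $a$ and compute $\Xi_a^3$ separately, whereas the paper counts directly by singular points, noting that through each of the $n^2 t(n)$ nodes pass $n$ components, so the $n^3$ ordered triples at each node contribute $-1$ apiece; both bookkeepings yield the same total.
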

\begin{proof}
We first recall that $\wDelta_n$ consists of $n^2$ irreducible components which we denote by $\wDelta_{n,i}$. We first understand the pairwise intersections $\wDelta_{n,i}\cap \wDelta_{n,j}$ for $i\neq j$ . First of all, on $\oub(n)$ any two different components ${\Delta}_{n,i}$ and ${\Delta}_{n,j}$ intersect in (some of) the $n^2\,t(n)$ singular points of  $\oub(n)$, but nowhere else. For any such singular point of $\oub(n)$, there are $n$ components ${\Delta}_{n,j}$ that contain it. We also recall that in terms of the tangent cone at such a singularity, ${\Delta}_{n,i}$ and ${\Delta}_{n,j}$ correspond to planes in the same family and that the small blow-up is achieved by blowing up in these Weil divisors. This means that  $\wDelta_{n,i}$ and $\wDelta_{n,j}$ are obtained by blowing up each of ${\Delta}_{n,i}$ and ${\Delta}_{n,j}$ in a smooth point. Moreover,~$\wDelta_{n,i}$ and~$\wDelta_{n,j}$ intersect along a smooth rational curve, which is a
$(-1)$ curve on each of these surfaces. Hence we find that
\begin{equation}
\wDelta_{n,i}|_{\wDelta_{n,j}} = \sum_k E_k
\end{equation}
where $E_k$ are $(-1)$ curves on $\wDelta_{n,i}$ and $k$ runs through all the points of intersection of  ${\Delta}_{n,i}$ and ${\Delta}_{n,j}$.

We also claim that
\begin{equation}\label{equ:selfintdiag}
\wDelta_{n,i}|_{\wDelta_{n,i}} = -L + \sum_{\ell} E_{\ell}
\end{equation}
where the $E_{\ell}$ are $(-1)$ curves as before and $\ell$ runs through all singular points of  $\oub(n)$ which are contained in ${\Delta}_{n,i}$.

To prove this we use the adjunction formula which gives us that
\begin{equation} \label{equ:can1}
\wDelta_{n,i}|_{\wDelta_{n,i}} =K_{\wDelta_{n,i}} - K_{\wub(n)}|_{\wDelta_{n,i}}\,.
\end{equation}
Recall that $\wDelta_{n,i}$ is isomorphic to the blow-up of the universal elliptic curve $p_n: \ox[1](n)\to\oa[1](n)$ in the singular points of the fibers over the cusps. By the definition of the Hodge line bundle, we have
\begin{equation}\label{equ:can2}
K_{\ox[1](n)}=p_n^*(K_{\oa[1](n)}) + L
\end{equation}
and hence we obtain
\begin{equation}\label{equ:can3}
K_{\wDelta_{n,i}}=p_n^*(K_{\oa[1](n)}) + L + \sum_{\ell} E_{\ell}\,.
\end{equation}
Similarly, using the projection $q'_n: \oub(n)=\ox[1](n) \times_{\oa[1](n)}\ox[1](n) \to \oa[1](n)$ and the fact that  $\wDelta_{n,i}$ is a small resolution of $\oub(n)$, we find that
\begin{equation}\label{equ:can4}
K_{\wub(n)}= q_n^*(K_{\oa[1](n)}) +2L
\end{equation}
where $q_n:  \wDelta_{n,i} \to \oa[1](n)$ is the blow-down map followed by the projection $q'_n$.
The claim now follows from combining equations \eqref{equ:can1}, \eqref{equ:can3} and \eqref{equ:can4}.

One can now argue as follows. Since $L^2=0$ and $L.E_{\ell}=0$  on $\wDelta_{n,i}$, any  contribution to $\wDelta^3$ comes from triple intersections $E_i^3$  coming from the  singular points of  $\oub(n)$. Through any such point pass~$n$ components of~$\wDelta$. Any triple of these components (which can be the same) contributes $-1$ to the intersection number. In this way we obtain $-n^3\cdot n^2\,t(n)$ where $n^2\,t(n)$ comes from the number of singularities. Dividing by the order of the group $H_n$, which is $2n^5t(n)$, we obtain the result.
\end{proof}

Finally, we consider the Poincar\'e line bundle $P$  on $\ua[1] \times_{\ab[1] } \ua[1]$. By e.g.~\cite[Sec.~7]{ergrhu2}  the class of this line bundle is given by
\begin{equation*}
P=T_1 + T_2 - \wDelta.
\end{equation*}
We want to extend this to $\oub$ and $\wub$ respectively. Any two such extensions differ by a multiple of a fiber and we want to choose the extension in such a way that the restriction to the $0$-section is trivial.
For this we define
\begin{df}
We define the {\em Poincar\'e  class} on $\wub$ by
\begin{equation}\label{eq:Poincare}
P=T_1 + T_2 - \wDelta\ -L.
\end{equation}
\end{df}
With this definition we have indeed achieved that the restriction to the $0$-section is trivial, since
$$
 PZ_1Z_2=(T_1+T_2-\wDelta -L)Z_1Z_2=\frac{1}{24}-\frac{1}{24}=0 \,.
$$

We denote the closure of the antidiagonal in $\oub$ by $\wDelta^-$ and its strict transform in $\wub$ by $\wDelta^-$.
Note that the diagonal and the antidiagonal correspond to different rulings in the quadric cones of the
nodes of $\oub$.
Hence $\wDelta^-$ is isomorphic to $\Delta^-$ and intersects the exceptional line $E$ transversally in one point. For future use we note
\begin{lm}\label{lem:classPoincare}
The class of the antidiagonal is given by
\begin{equation}\label{eq:aDelta}
 \wDelta^-=T_1+T_2+P-L=2T_1+2T_2 - \wDelta -2L\,.
\end{equation}
\end{lm}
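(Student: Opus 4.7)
The strategy is to use the description of $\Pic_\RR(\wub) \cong H^2(\wub,\RR)$ from \Cref{prop:rankPictildeY} together with the symmetry of $\oub$ under swapping the two factors, which fixes $\wDelta^-$ as a cycle and exchanges $Z_1$ with $Z_2$. This lets me write
$$
  \wDelta^- = aL + b(Z_1+Z_2) + d\wDelta
$$
for some coefficients $a,b,d\in\RR$, and the task reduces to pinning down three unknowns. Once this is done, the equivalence between the two asserted formulas follows immediately from the definition $P=T_1+T_2-\wDelta-L$ and $T_i=Z_i+L$.

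To determine $b$ and $d$, I would restrict to a general fiber of $\wub\to\oa[1]$, which is a smooth product $E\times E$. On this fiber $L$ restricts trivially, $\wDelta$ and $\wDelta^-$ restrict to the honest diagonal and antidiagonal, and $Z_i$ restrict to the obvious zero sections. On $E\times E$ one has the elementary intersection numbers $\Delta^2=(\Delta^-)^2=0$ (since $\chi(E)=0$), $\Delta.Z_i=\Delta^-.Z_i=1$, $Z_1.Z_2=1$, $Z_i^2=0$, and $\Delta.\Delta^-=4$ (the four $2$-torsion points), so using symmetry in the two factors one gets
$$
  \Delta^- \equiv 2Z_1+2Z_2-\Delta
$$
in $\NS(E\times E)$. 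This forces $b=2$ and $d=-1$.

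To determine $a$, I would restrict $\wDelta^-$ to the divisor $Z_1\cong\ox[1]$. Geometrically, $\wDelta^-\cap Z_1 = Z_1\cap Z_2$ transversely (on each fiber $(0,x)=(x,-x)$ forces $x=0$), so the restriction $\wDelta^-|_{Z_1}$ equals the class $Z$ of the zero section of $Z_1\cong\ox[1]$. On the other hand, the restrictions of the basis classes to $Z_1$ are computed using the intersection numbers of \Cref{prop:tildeY}: $L|_{Z_1}=L$, $Z_2|_{Z_1}=Z$, $\wDelta|_{Z_1}=Z$ (the zero section avoids the nodes of $\oub$, so $\wDelta\cap Z_1=\Delta\cap Z_1=Z_1\cap Z_2$), and $Z_1|_{Z_1}=-L$ (from $LZ_1^2=0$ together with $Z_1^2Z_2=-1/24$). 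Substituting, one gets
$$
  \wDelta^-|_{Z_1}=(a-b)L+(b+d)Z,
$$
and comparing to $Z$ yields $a=b=2$ (together with $b+d=1$, which is consistent with Step~1). Combining the two steps gives $\wDelta^- = 2L+2(Z_1+Z_2)-\wDelta = 2T_1+2T_2-\wDelta-2L$, which by the definition of $P$ also equals $T_1+T_2+P-L$.

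The main technical point to verify carefully is that the strict transform $\wDelta^-$ on $\wub$ is genuinely the divisor one expects, without any hidden contribution from the small resolution near the nodes of $\oub$: at such a node the antidiagonal lies in the family of rulings opposite to the diagonal, so on $\wub$ (which is the small resolution singling out $\wDelta$) the antidiagonal meets the exceptional $\PP^1$ transversely in a single point and its strict transform is isomorphic to its image in $\oub$. This is what justifies using the cycle-theoretic computations of Step~1 on a fiber and the transverse intersection claim of Step~2.
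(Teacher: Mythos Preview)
Your proof is correct and follows the same overall strategy as the paper: determine the class on a general fiber $E\times E$, then pin down the remaining $L$-coefficient by a restriction argument. The only difference is in how you execute the second step. You restrict to the divisor $Z_1\cong\ox[1]$ and compute $\wDelta^-|_{Z_1}=Z$ directly from the intersection table of \Cref{prop:tildeY}; the paper instead restricts to the curve $Z_1\cap Z_2$ (the $0$-section of $\wub\to\oa[1]$) and observes that the Kummer involution on one factor of $\oub$ interchanges $\Delta$ with $\Delta^-$ while fixing this $0$-section, so $\wDelta^-$ and $\wDelta$ have the same restriction there. Your version is a clean computation using only data already tabulated; the paper's is a bit more conceptual but invokes an involution that does not lift to $\wub$ (which is harmless since the $0$-section avoids the nodes).
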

\begin{proof}
To express $\wDelta^-$ in terms of the Poincar\'e class, note that the first equality~\eqref{eq:aDelta} holds on the general fiber of $\wub$. To check that the factor of $L$ is correct it is enough to check the restriction of  $\wDelta^-$ to the $0$-section.
But the latter is equal to that of $\wDelta$ with the $0$-section. This follows from the observation
that we have an automorphism of $\oub$, namely the Kummer involution on one of the factors, which interchanges $\Delta$ and $\Delta^-$ and leaves the $0$-section invariant, together with the fact that the
$0$-section $Z_1\cap Z_2$ does not meet the singular point of $\oub$. The second equality follows immediately from~\eqref{eq:Poincare}.
\end{proof}

We will also need the
\begin{lm}\label{lm:EDelta} The intersection of $\wDelta^-$ on $\wub$ with the exceptional line $E$ is given by
\begin{equation}\label{equ:intersectionantidiagonalE}
E.\wDelta^-=\frac12.
\end{equation}
\end{lm}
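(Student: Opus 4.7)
The plan is to use Lemma~\ref{lem:classPoincare} to reduce the computation to the intersection $E\cdot\wDelta$, and then exploit the fact that $\wDelta$ contains $E$ to make that intersection a direct local computation.

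Using Lemma~\ref{lem:classPoincare} we write $\wDelta^- = 2T_1+2T_2-\wDelta-2L$. We first dispose of the easy terms. For $L$: since $L$ is pulled back from $\oa[1]$ while the exceptional line $E$ sits in a single fiber of $\wub\to\oa[1]$ (over a cusp), the restriction $L|_E$ is trivial, so $E\cdot L=0$. For $T_i$: using $T_i=Z_i+L$ from \Cref{prop:intersectionX1}, it suffices to note that the zero section $Z_i$ pulled back from the $i$-th factor $\ox[1]$ passes through the smooth locus of every fiber of $\ox[1]\to\oa[1]$, so $Z_i$ avoids the nodes of $\oub$; hence $Z_i\cap E=\emptyset$ and $E\cdot Z_i=0$. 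Combining, $E\cdot T_i=0$ for $i=1,2$, and therefore
\[
E\cdot\wDelta^- \;=\; -E\cdot\wDelta.
\]

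It remains to show $E\cdot\wDelta=-\tfrac12$. The key point is that $\wDelta$ contains $E$: this was established in the analysis preceding the lemma, where $\Delta$ was placed in the ruling family contracted by the small resolution, making $\wDelta$ the blow-up of $\Delta$ at the nodes, with $E$ appearing as an exceptional $(-1)$-curve on each surface component that passes through a node. Consequently $E\cdot\wDelta=\deg\bigl(\mathcal{N}_{\wDelta/\wub}\big|_E\bigr)$, and we compute this via the self-intersection formula \eqref{equ:selfintdiag}: on a level cover, $\wDelta_{n,i}|_{\wDelta_{n,i}}=-L+\sum_{\ell}E_{\ell}$. Restricting to $E=E_{n,k}$ on this surface kills the $-L$ term (as above), kills the $E_\ell$ for $\ell\neq k$ (different exceptional lines are disjoint on $\wDelta_{n,i}$), and leaves $E_k|_{E_k}=-1$ from the self-intersection of the $(-1)$-curve $E_k$ on $\wDelta_{n,i}$. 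Summing over the $n$ components of $\wDelta_n$ that pass through a given node and then over all $n^2t(n)$ nodes, and applying the same stacky averaging that yields $\wDelta^3=-\tfrac12$ in \Cref{lm:Delta3}, one obtains $E\cdot\wDelta=-\tfrac12$, hence $E\cdot\wDelta^-=\tfrac12$.

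An alternative (and more conceptual) route for the final step, which I expect to be the cleanest presentation, is to locate the unique transversal intersection point $p\in E\cap\wDelta^-$ as a fixed point of the Kummer involution $(-1,-1)$. Indeed, on the local quadric model $xy=uv$, Kummer acts as $(x,y,u,v)\mapsto(y,x,v,u)$, lifting to the exceptional $E\cong\PP^1$ as the involution $[a:b]\mapsto[b:a]$, which fixes $[1:1]$ and $[1:-1]$; since $\wDelta^-$ is Kummer-invariant, its unique intersection with $E$ is forced to lie at one of these two fixed points. On the stack $\wub$ this intersection point therefore carries an extra $\{\pm 1\}$-stabilizer, so the stack-theoretic degree of the $0$-cycle $E\cap\wDelta^-$ is $\tfrac12$ rather than $1$. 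The main obstacle in either approach is the rigorous bookkeeping of this stacky factor; the second approach localizes it at a single Kummer-fixed point, which is why I expect it to give the cleanest final argument.
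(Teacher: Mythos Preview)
Your reduction $E\cdot\wDelta^{-}=-E\cdot\wDelta$ via \Cref{lem:classPoincare} together with $E\cdot L=E\cdot T_i=0$ is correct and is a genuinely different route from the paper, which instead counts the transversal intersections of the antidiagonal components with the exceptional lines directly on the level cover.

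However, your computation of $E\cdot\wDelta$ has a real gap. Following your count literally: each of the $n$ components of $\wDelta_n$ through a given node contributes $-1$, and summing over all $n^2t(n)$ nodes gives $-n^3t(n)$; dividing by $|H(n)|=2n^5t(n)$ as in \Cref{lm:Delta3} yields $-\tfrac{1}{2n^2}$, not $-\tfrac12$. The point is that the ``same stacky averaging'' does \emph{not} carry over unchanged from the divisor $\wDelta$ to the curve $E$. The pullback $p^*[E]$ to $\wub(n)$ is $n^2\sum_\ell[E_\ell]$ rather than the reduced sum, because the order-$2n^3$ stabilizer of a node contains an order-$n^2$ subgroup (the diagonal $\mu_n\subset\mu_n\times\mu_n$ together with the unipotent cusp stabilizer in $\SL_2$) acting trivially on the exceptional line. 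This is exactly the factor the paper's proof encodes in its ``$n^2$ pairs'' per node; once you insert it, your count gives $-n^5t(n)/|H(n)|=-\tfrac12$.

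A cleaner way to finish along your line, avoiding the level cover altogether: from \eqref{equ:selfintdiag} one has $\wDelta|_{\wDelta}=-L+E$ on the stack, so pushing forward gives $[E]=\wDelta^2+L\wDelta$ in $H^4(\wub)$, and then $E\cdot\wDelta=\wDelta^3+L\wDelta^2=-\tfrac12+0=-\tfrac12$ directly from \Cref{prop:tildeY}. Your alternative Kummer-fixed-point argument is also pointing at the correct phenomenon, but as you note, making the stacky contribution precise there requires the same kind of bookkeeping.
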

\begin{proof}
We use a similar argument as in the computation of $\wDelta^3=-\frac12$ in \Cref{lm:Delta3}. We first fix a singularity of $\oub(n)$.
We recall that the stabiliser subgroup of a singularity of $\oub$ which acts trivially over the base is isomorphic to $\ZZ/n\ZZ \times \ZZ/n\ZZ$. In fact we should think of this as  $\mu_n \times \mu_n$ where $\mu_n$
is the group of $n$-th roots of unity. Every component~$\wDelta_{n,i}$ of the preimage~$\wDelta_n$ of~$\wDelta$ is fixed by a subgroup~$\mu_n$ of~$\mu_n \times \mu_n$, namely the subgroup $\{(\lambda, \lambda)| \lambda\in \mu_n\}$.
Similarly, every   component~$\wDelta_{n,j}^-$ of the preimage~$\wDelta_n$ of~$\wDelta^-$ is fixed by a subgroup  $\mu_n$ of $\mu_n \times \mu_n$, namely the subgroup $\{(\lambda, \lambda^{-1})| \lambda\in \mu_n\}$.
Fixing a pair $(\wDelta_{n,i},\wDelta_{n,j})$ gives rise to small resolution and an exceptional curve $E_{i,j}$ in $\wub(n)$. As we have already observed every component $\wDelta_{n,k}^-$ meets $E_{i,j}$ transversally in one point.
In this was we have $n^2$ pairs and each pair contributes $n$ so that in total we get $n^3$.
Since we have $n^2t(n)$ singularities in $\oub(n)$ we obtain a contribution $n^5t(n)$, which we have to divide by the order $|H(n)|=2n^5t(n)$ giving us $1/2$ as claimed.
\end{proof}

We must now put this into the context of the geometry of  $\oa$.
For this we recall that the toroidal compactification $\oa$ is made up of strata which correspond to the cones $\{\sigma\}$ in the second Voronoi fan, modulo the action
of $\GL(3,\ZZ)$. If $\sigma$ is a rank $i$ cone, then we associate to it a torus bundle $\calT(\sigma)$ over the fiber product ${\calX}_{3-i}^i$. The dimension of $\calT(\sigma)$ is $6 - \dim (\sigma)$. Let $G(\sigma)$ be the group
which is the stabilizer of $\sigma$ in its $\QQ$-span. Then $G(\sigma)$ acts on~$\calT(\sigma)$ and the quotient is the stratum $\sigma \subset \oa$.

In our case, the stratum $\osi{K3}$ admits the following description
\begin{lm}\label{lm:osiK3geometry}
The group $G(\si{K3})$ acts on $\wub$ and there is a finite  $G(\si{K3})$-equivariant morphism $f(\si{K3}): \wub\to \osi{K3}$ which, in particular, identifies the quotient of
 $\ua[1] \times_{\ab[1]} \ua[1]$ by $G(\si{K3})$ with $\si{K3}$.
\end{lm}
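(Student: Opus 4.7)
My plan is to derive the lemma from the general toroidal framework for $\oa$ recalled immediately before the statement, making the abstract torus bundle, small resolution, and group action explicit for the cone $\si{K3}$.

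First, I would identify the torus bundle $\calT(\si{K3})$. Since $\si{K3}$ is top-dimensional inside its $\QQ$-span, which is the rank-$3$ sublattice $\Sym^2(\ZZ x_1 + \ZZ x_2) \subset \Sym^2(\ZZ^3)$, the torus orbit associated to $\si{K3}$ has dimension $0$ and the bundle $\calT(\si{K3})$ over $\ua[1]\times_{\ab[1]}\ua[1]$ collapses to the base itself, of dimension $3 = \dim\si{K3}$. The abstract theory then immediately gives $\si{K3} \cong (\ua[1]\times_{\ab[1]}\ua[1])/G(\si{K3})$, which is the ``in particular'' clause of the lemma.

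Second, I would construct $f(\si{K3})$ by extending this identification over the cusp of $\oa[1]$, matching $\wub$ with the intrinsic toroidal completion of $\calT(\si{K3})$. The natural extension of the fiber product is $\oub$, but its $A_1$-singularities must be resolved in a way compatible with the second Voronoi fan along $\osi{K3}$. The cones adjacent to $\si{K3}$ in the fan are $\si{K3+1}$, $\si{C4}$, $\si{K4-1}$, and $\si{K4}$, each obtained from $\si{K3}$ by adjoining further generators involving $x_3$. A local comparison of the corresponding toric models shows that the required small resolution is precisely the one blowing up the Weil divisor $\Delta \subset \oub$, which reproduces the construction of $\wub$ of this section. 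The abstract toroidal map then lifts to a morphism $f(\si{K3}): \wub \to \oa$ with image $\osi{K3}$.

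Third, I would verify equivariance and finiteness. The action of $G(\si{K3})$ on $\calT(\si{K3})$ extends to $\oub$ by functoriality of the fiber product. The subgroup that preserves our chosen boundary component of $\oa$, rather than swapping it with other $\GL(3,\ZZ)$-translates, acts by preserving $\Delta$, and so functoriality of blow-ups along $G$-invariant subschemes lifts the action to $\wub$; equivariance of $f(\si{K3})$ is then automatic. Finiteness follows since $f(\si{K3})$ is proper ($\wub$ is projective by the discussion following \Cref{lm:Delta3}, and $\osi{K3}$ is closed in the projective variety $\oa$) and generically finite on the open stratum. The main obstacle is the second step: one must carry out the local toric comparison carefully enough to confirm that it is $\Delta$, rather than $Z_1$, $Z_2$, or $\Delta^-$, that must be blown up to land inside $\oa$, and then check that the chosen boundary component of $\oa$ is stabilized precisely by $G(\si{K3})$ so that the action descends correctly through the small resolution.
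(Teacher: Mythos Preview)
Your overall architecture---identify $\calT(\si{K3})$ with the fiber product, compare the toroidal fan along $\osi{K3}$ with the small resolution, and then check equivariance---matches the paper's approach. One small slip: $\si{C4}$ does not contain $\si{K3}$ as a face (cf.~\eqref{eq:strataclosures}: $\osi{K3}=\si{K3}\sqcup\osi{K3+1}$), so it should not appear in your list of adjacent cones.

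The real gap is in your third step. You assert that the $G(\si{K3})$-action extends to $\oub$ ``by functoriality of the fiber product'' and then lifts to $\wub$ because the group ``acts by preserving $\Delta$''. Neither claim holds for the third generator $\iota:(x_1,x_2)\mapsto(x_1,x_1-x_2)$. On a fiber $E\times E$ its dual action is $(x,y)\mapsto(x+y,-y)$, which is not a product map and sends the diagonal $\{(x,x)\}$ to $\{(2x,-x)\}$, so $\Delta$ is \emph{not} $\iota$-invariant and the blow-up functoriality argument does not apply. Worse, $\iota$ does not even extend to a morphism of $\oub$: in local coordinates near a node $ab=\tilde a\tilde b$ one computes $\iota:(a,b,\tilde a,\tilde b)\mapsto(a\tilde a,\,\tilde b/a,\,\tilde b,\,\tilde a)$, which is indeterminate precisely at the node. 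The paper's proof confronts this directly: it shows that this indeterminacy is resolved by the small blow-up along the Weil divisor $a=\tilde b=0$, and then checks that this is the \emph{same} small resolution as the one obtained by blowing up $\Delta$ (both lie in the same ruling of the quadric cone). Only after this explicit verification does one know that $\iota$, and hence all of $G(\si{K3})$, acts on $\wub$. Your outline correctly flags the toric comparison in step~2 as the main obstacle, but the genuinely delicate point is the extension of $\iota$, and your proposed mechanism for it is incorrect.
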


Before we give the proof, we first recall the group  $G(\si{K3}) \subset \Sp (4,\ZZ)$ which was computed in \cite[p. 101]{huto1}. It is generated by  the involutions
\begin{equation}\label{equ:generatorsGK3}
\begin{array}{lll}
(x_1,x_2) &\mapsto& (-x_1,-x_2) \\
(x_1,x_2) &\mapsto& (x_2,x_1) \\
(x_1,x_2) &\mapsto& (x_1,x_1-x_2)\,.
\end{array}
\end{equation}
and from this we see that $G(\si{K3})\cong S_3 \times \ZZ/2\ZZ$.

\begin{proof}
By construction of the toroidal compactification, the open stratum can be described as $\si{K3} \cong (\ua[1] \times_{\ab[1]} \ua[1])/G(\si{K3})$. We now have to prove that the action of $G(\si{K3})$ extends to an action on $\wub$, and that the
morphism  $\ua[1] \times_{\ab[1]} \ua[1] \to {\beta}(\si{K3})$ extends to~$\wub$. Having done this, the $G(\si{K3})$-equivariance will be automatic.

We first show that the group action extends. This is obvious for the  maps $(x_1,x_2) \mapsto (-x_1,-x_2)$ and $(x_1,x_2) \mapsto  (x_2,x_1)$. The first involution  is just the Kummer involution, which we have already divided out in the construction of $\ox[1]$, and the second involution interchanged the factors of  $\ox[1] \times_{\oa[1]} \ox[1]$. In both cases these involutions are compatible with the small resolution. It remains to treat the involution $\iota:(x_1,x_2) \mapsto (x_1,x_1-x_2)$. The action on a general fiber $E \times E$ of $\ua[1] \times_{\ab[1]} \ua[1] \to \ab[1]$ is given by the dual action and this was computed in \cite[p. 99]{huto1} to be given by $(x,y) \mapsto (x+y,-y)$.

Our approach is the following: both varieties $\oub$ and $\osi{K3}$ are, up to finite group actions, toroidal varieties, as is the small resolution $\wub$. We shall compare the toroidal structures of these varieties and use toric geometry
to show that the map extends. For this we shall work on a level cover. In fact we will work on a level cover $\oub(n)$ with odd $n \geq 3$ and we shall choose a singularity $(P,P)$ where $P$ is a singular point of a fiber
of the universal elliptic curve $\ox[1](n) \to \oa[1](n)$ over a cusp such that $P$ is fixed under the Kummer involution of $\ox[1](n)$ which, near $P$, interchanges the two components of the $n$-gon which meet in $P$.

We first recall how the compactification of  $\ua[1](n)\to\ab[1](n)$
to  $\ox[1](n) \to \oa[1](n)$ is obtained near $P$.
Recall that $\ua[1](n)$ is the quotient of the product  $\CC \times \HH_1$. We shall use  $z,\tau$ as the coordinates on this product and put
$u:=e^{2\pi n iz}$ and $t:=e^{2\pi in\tau}$.  We interpret these as coordinates of a torus $T_{\{0\}}=(\CC^*)^2$.
The construction of the universal elliptic curves $\ox[1](n) \to \oa[1](n)$ is explained in detail in \cite[Sec.~2B]{hukawebook}. The main idea is to define embedding $T_{\{0\}} \to T_k=\CC^2$ for $k\in \ZZ$ and then divide out $\cup_k T_k$ by an action of $\ZZ$ which cyclically maps $T_k$ to $T_{k+n}$.
 Let us consider the embedding
 \begin{equation*}
 i_0: T_{\{0\}}  \to T_0, \quad  (z,t) \mapsto (z,z^{-1}t)=:(a,b).
 \end{equation*}
 Locally the toroidal compactification is given by adding the two coordinate axes $a=0$ and $b=0$ which give rise to (open parts) of the two lines in the $n$-gon meeting at $P$.
 In our situation we have two copies of $\ua[1](n)\to\ab[1](n)$ and we use the coordinates $(z,t)$ and $(\tilde z, \tilde t)$, respectively  $(a,b)$ and $(\tilde a, \tilde b)$, on the two factors .
 The equation of the fiber product then becomes $ab - \tilde a \tilde b=0$ verifying that the fiber product ia a $3$-fold with an isolated $A_1$-singularity near the point $(P,P)$.

The next step is to understand the action of the Kummer involution $\iota$ on $\ox[1](n) \to \oa[1](n)$ near the point $P$. This is discussed in detail in \cite[Sec,~2B, p.~31]{hukawebook}.
Recall the torus embeddings $T_0 \to T_{k}, (u,v) \mapsto (uv^{-k}, u^{-1}v^{k+1})$. Then $\iota$ is given by $\iota_k: T_k \to T_{-k-1}$ with
$(u_k,v_k) = (s,t) \mapsto (u_{-k-1}, v_{-k-1})=(t,s)$  and then using
$h_{2k+1}: T_{-k-1} \to T_{k}$ with $(u_{-k-1},v_{-k-1})=(s,t)  \mapsto (u_{k},v_{k})=(s,t)$. This translates into $\iota: ( a,  b)\mapsto ( b,  a)$. Indeed, this is intuitively clear as the involution fixes the origin and
interchanges the two coordinate axes.

This now allows us to understand the action of $\iota$ near $(P,P)$. Since this acts by $(x,y) \mapsto (x+y,-y)$ on a general fiber $E \times E$ this can now becomes
$$
((z, \frac{t}{z}), (\tilde z, \frac{ t}{\tilde z})) \mapsto ((z\tilde z,\frac{t}{z\tilde z}),( \frac{ t}{\tilde z}, \tilde z)).
$$
This can be written as
$$
\iota: ((a,\tilde a),(b, \tilde b)) \mapsto (a\tilde a, \frac{\tilde b}{a}),(\tilde b, \tilde a))=  (a \tilde a, \frac{b}{\tilde a}),(\tilde b, \tilde a)).
$$
This is clearly well defined outside $a = \tilde a=0$. We note, however that even if $a=\tilde a=0$, as long as $\tilde b\neq 0$, this is well defined on $\oub(n)$. Indeed, the point $(0, \infty)$ is the point $\infty$
on the $\PP^1$ in our $n$-gon  which is the compactification of the coordinate line $a=0$.  The same applies to $b \neq 0$.  Hence the map is well defined outside the node.
The indeterminacy of the map $\iota$ can be resolved by a small resolution of the $A_1$-singularity, namely the blow-up along the Weil divisor
$a=\tilde b=0$, which is in fact the same as the blow-up along  $\tilde a = b=0$. We note that this also the same as the small resolution given by blowing up $a - \tilde a= b - \tilde b = 0$, which is the blow-up of the diagonal.

Next we recall that the small resolutions of a $3$-dimensional quadric cone are toric.
Let $\sigma$ be the cone spanned by four vectors $v_i \in \ZZ^3, i= 1, \dots 4$ such that any three of them form a basis of $\ZZ^3$ and $v_1+v_4=v_2+v_3$. This gives rise
to the quadric cone
$$
Q: x_1x_4 - x_2x_3=0.
$$
The two small resolutions of $Q$ are given by either blowing up a Weil divisor in one of the two families of planes contained in $Q$ and planes in the same family define the same small resolution.
From a toric point of view this corresponds to the two subdivision of $\sigma$ into two basic cones. These can be obtained by either adding the cone $\tau= \RR_{\geq 0}v_1 + \RR_{\geq 0}v_4$ or
the cone $\tau= \RR_{\geq 0}v_2 + \RR_{\geq 0}v_3$. A straightforward toric computation shows that the first possibility corresponds to blowing up $x_1=x_3=0$ (or equivalently $x_2=x_4=0$).
The second choice corresponds to blowing up $x_3=x_4=0$ (or equivalently $x_1=x_2=0$).  Identifying the coordinates $x_1,x_4,x_2,x_3$ with $a,b,\tilde a, \tilde b$ we are exactly in the situation where we
add the cone $\tau= \RR_{\geq 0}v_1 + \RR_{\geq 0}v_4$.

We must now check that the morphism  $f(\si{K3})(n): \ua[1](n) \times_{\ab[1](n)} \ua[1](n) \to {\beta}(\si{K3})(n)$ extends to~$\wub(n)$.
For this we compare the toric compactification of the universal elliptic curve with the toric compactification $\osi{K3}(n)$ of
the stratum $\sigma_{K3}(n)$ in $\oa[3](n)$.

For this we have to understand the compactification of the stratum associated to $\sigma_{K3}(n)$. We shall again work on a level cover. To describe the
compactification we have to understand the part which is added over the cusp $\ab[0]$.  Analytically locally near the boundary, $\osi{K3}(n)$ is a $3$-dimensional toric variety. The torus in question is $\Sym^2(\ZZ^3) \otimes \CC^*$
divided by the subtorus associated to the three-dimensional space spanned by $\sigma_{K3}$, i.e. by $x_1^2,x_2^2,(x_1-x_2)^2$. We can read off the fan used for the toroidal  compactification from the
cones listed in Table \ref{table:cones}. This is given, again modulo the subspace spanned by $\sigma_{K3}$,  by the cones
$\sigma_{K4}$ and $\iota(\sigma_{K4})$ which in our quotient space are generated by $\langle x_3^2,(x_1-x_3)^2, (x_2-x_3)^2 \rangle $ and  $\langle x_3^2,(x_1-x_3)^2, (x_1-x_2-x_3)^2 \rangle $, together with their faces.
Labelling the generators $x_3^2,(x_1-x_3)^2, (x_2-x_3)^2, (x_1-x_2-x_3)^2$ by $v_1,w_1,v_2,w_2$ we are exactly in the situation of the above one $\sigma$, as $v_1+w_1=v_2+w_2$ (modulo the space generated by
 $x_1^2,x_2^2,(x_1-x_2)^2$). We are exactly in the situation where  $\sigma$ is
subdivided into two basic cones by introducing the cone $\RR_{\geq0}v_1 + \RR_{\geq0}w_1$.
Hence we can identify the two fans used to compactify $\si{K3}$ and $\ua[1] \times_{\ab[1]} \ua[1]$.
It remains to identify the two $3$-dimensional tori used in the toroidal compactifications: the result then follows from the standard relationship between maps of fans and morphisms of toric varieties.
But this is easy. As explained above, the torus used in our description of  $\osi{K3}(n)$ has the natural coordinates $t_{33}=e^{2\pi in\tau_{33}}, t_{13}=e^{2\pi in\tau_{13}}, t_{23}=e^{2\pi in\tau_{23}}$ (where $n$ again comes from
the level structure). Here $\tau=(\tau_{ij}) \in \HH_3$ are the usual coordinates in Siegel space. The coordinates for $\ua[1](n) \times_{\ab[1](n)} \ua[1](n)$ are $\tau_{33}$ for the base and $\tau_{13}, \tau_{23}$ for the fibers. The coordinates of the torus
used in the local compactification of $\wub(n)$ is again given by taking the exponentials. Hence the two tori can naturally be identified and this map extends to a local isomorphism of $\wub(n)$  and $\osi{K3}(n)$.
This concludes the proof.
\end{proof}
\begin{rem}
This proof shows that the requirement to extend the  action of the group $G(\si{K3})$ necessarily leads us to the small resolution of $\ox[1] \times_{\oa[1]}\ox[1]$.
\end{rem}
\begin{rem}\label{rem:stacky12}
In our above calculations of intersection numbers we have already taken the involution $(x_1,x_2) \mapsto  (-x_1,-x_2)$ into account, but not yet however the action of the symmetric group $S_3$.
There is also a further subtlety which will play a role several times. This is that the stabilizer subgroup of a cone depends on the genus where we consider it. If we move from genus $2$ to genus $3$,
the order of this group increases by a factor of~$2$. The reason is the
involution given by $-1 \in \Sp(6,\ZZ)$.  Note that the involution $(x_1,x_2) \mapsto  (-x_1,-x_2)$ in $\Sp(4,\ZZ)$ and $-1 \in \Sp(6,\ZZ)$, differ in so far as they  come  from the matrices
\begin{equation*}
\begin{pmatrix} -1& 0& 0\\ 0 &- 1&0\\ 0&0&1\end{pmatrix} \qquad\hbox{and}\qquad \begin{pmatrix} -1& 0& 0\\ 0 &- 1&0\\ 0&0&-1\end{pmatrix}
\end{equation*}
respectively. In total this means that we have to multiply  the numbers computed above on $\tilde\calY_1$ by  a further stacky factor of $1/12$.
\end{rem}

We want to conclude this section with a description of the cohomology of~$\osi{K3}$.

\begin{lm}\label{lm:cohY}
For the  real cohomology groups we have
\begin{equation*}
H^2(\osi{K3}, \RR) \cong H^4(\osi{K3}, \RR) \cong \RR^2.
\end{equation*}
In terms of invariant classes on  $\wub$ these groups are given by
\begin{equation*}
H^2(\osi{K3}, \RR) = \RR L\oplus\RR (Z_1+Z_2+\wDelta^-)
\end{equation*}
and
\begin{equation*}
 H^4(\osi{K3}, \RR)=\RR (4T_1T_2-P^2) \oplus\RR (Z_1+Z_2+\wDelta^-)L.
\end{equation*}
\end{lm}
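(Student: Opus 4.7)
The approach is to apply Lemma~\ref{lm:osiK3geometry}: the finite equivariant morphism $f(\si{K3}): \wub \to \osi{K3}$ realizes $\osi{K3}$ as the quotient $\wub/G(\si{K3})$ with $G(\si{K3})\cong S_3\times\ZZ/2\ZZ$. Since real cohomology of a finite quotient equals the invariants, $H^*(\osi{K3},\RR)=H^*(\wub,\RR)^{G(\si{K3})}$, and the whole problem reduces to a computation of invariants on the smooth projective threefold $\wub$.

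For $H^2$: Proposition~\ref{prop:rankPictildeY} gives $H^2(\wub,\RR)=\Pic_\RR(\wub)$ with basis $L,Z_1,Z_2,\wDelta$. I will compute how each of the three involutive generators of $G(\si{K3})$ from~\eqref{equ:generatorsGK3} acts on this basis, using the crucial fact (highlighted in the proof of Lemma~\ref{lm:osiK3geometry}) that $\GL(2,\ZZ)$ acts on fibers via the \emph{contragredient} representation; the third generator acts on a smooth fiber as $(x,y)\mapsto(x+y,-y)$. The Kummer involution acts trivially on each of $L,Z_1,Z_2,\wDelta,\wDelta^-$; the swap fixes $L$ and $\wDelta^-$ and exchanges $Z_1\leftrightarrow Z_2$; the third involution fixes $L$ and $Z_2$ and exchanges $Z_1\leftrightarrow\wDelta^-$, since the loci $\{x=0\}$ and $\{x+y=0\}$ are interchanged under the dual action. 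Passing to the alternative basis $\{L,Z_1,Z_2,\wDelta^-\}$, valid by Lemma~\ref{lem:classPoincare} since $\wDelta^-=2Z_1+2Z_2-\wDelta+2L$, the subgroup $S_3$ of $G(\si{K3})$ acts as the standard permutation representation on $\{Z_1,Z_2,\wDelta^-\}$ while $L$ is fixed by all of $G(\si{K3})$. The invariants are thus $\RR L\oplus\RR(Z_1+Z_2+\wDelta^-)$, proving the first claim.

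For $H^4$: since $\osi{K3}$ is a projective threefold with only finite quotient singularities (as the finite quotient of the smooth projective $\wub$), it is a rational homology manifold, so Poincar\'e duality gives $\dim H^4(\osi{K3},\RR)=\dim H^2(\osi{K3},\RR)=2$. It therefore suffices to exhibit two linearly independent $G(\si{K3})$-invariant classes in $H^4(\wub,\RR)$. The product $L(Z_1+Z_2+\wDelta^-)$ is manifestly invariant as a product of $H^2$-invariants. For $4T_1T_2-P^2$, neither factor is individually $\iota_3$-invariant: one computes $\iota_3^*T_1=\wDelta^-+L$, $\iota_3^*T_2=T_2$, and $\iota_3^*P=-P-2T_2$, so that
$$
 \iota_3^*\bigl(4T_1T_2-P^2\bigr)-\bigl(4T_1T_2-P^2\bigr)=4T_2\bigl(\wDelta^--Z_1-P-T_2\bigr),
$$
and the bracket $\wDelta^--Z_1-P-T_2$ vanishes after substituting $T_i=Z_i+L$ and $\wDelta^-=2Z_1+2Z_2-\wDelta+2L$. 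Invariance under $\iota_2$ (by symmetry of $T_1,T_2$ and of $P$) and under the Kummer involution is clear. Linear independence of the two invariants is checked by pairing with $L\in H^2$: one has $L\cdot L(Z_1+Z_2+\wDelta^-)=0$ (since $L^2=0$), whereas the intersection numbers of Proposition~\ref{prop:tildeY} give $L(4T_1T_2-P^2)=1/4\neq 0$.

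The main technical step is the invariance calculation for $4T_1T_2-P^2$: the particular linear combination and coefficient are forced by matching the $\iota_3$-defects of $4T_1T_2$ and $P^2$ against each other. This rests on the explicit transformation $\iota_3^*P=-P-2T_2$, which in turn requires computing the class $\iota_3^*\wDelta$ on $\wub$; the latter can be pinned down by demanding $(\iota_3^*)^2=\op{id}$ together with the restriction to a generic fiber, which is computed from elementary graph-class considerations on $E\times E$.
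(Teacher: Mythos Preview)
Your proof is correct and follows essentially the same route as the paper: identify $H^*(\osi{K3},\RR)$ with the $G(\si{K3})$-invariants on $\wub$, compute the permutation action of $S_3$ on the basis $\{L,Z_1,Z_2,\wDelta^-\}$ of $H^2(\wub,\RR)$, and invoke Poincar\'e duality for the Betti numbers. The paper simply asserts that the two displayed $H^4$-classes are invariant and independent, whereas you carry out the $\iota_3$-invariance check for $4T_1T_2-P^2$ and the independence pairing explicitly; both computations are correct and fill in what the paper leaves to the reader.
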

\begin{proof}
We first observe that  Poincar\'e duality holds since $\osi{K3}$ has only finite quotient singularities. Moreover, the cohomology of $\osi{K3}$ is the $G(\si{K3})$-invariant cohomology
of $\wub$. We recall that $H^2(\wub,\RR)$ is $4$-dimensional
and generated by the classes $L,Z_1,Z_2$ and $\wDelta^-$ (where we could have replaced $Z_1,Z_2$ by $T_1,T_2)$. In the proof of \Cref{lm:osiK3geometry} we discussed the action of $G(\si{K3})$ on these classes.
Indeed, the three generators of this group described in Equation \eqref{equ:generatorsGK3} act as follows. The first automorphism acts trivially, the second interchanges $Z_1$ and $Z_2$ and leaves $L$ and $\wDelta^-$ invariant, whereas the third
leaves $L$ and $Z_2$ invariant and interchanges $Z_1$ and $\wDelta^-$. Hence the invariant subspace is $2$-dimensional. This gives the Betti numbers. Since the given classes are invariant and independent, the remaining claims follow.
\end{proof}

The surface $\osi{K3+1}$ is the closure of a stratum that is contained in $\osi{K3}$. We will thus consider its preimage $f(\si{K3})^{-1}(\osi{K3+1})\subset \wub$ under the finite map described in \Cref{lm:osiK3geometry} to compute its intersection numbers. Indeed, in terms of the geometry of $\wub$, the stratum $\osi{K3+1}$ is simply the fiber over the cusp of the map $\wub\to\oa[1]$. Thus the class of the surface $\osi{K3+1}$ on $\wub$ is the pullback of the class of the point $i\infty=\partial\oa[1]$, which is the class of the fiber $F=12L$.

To perform the intersection computations on $\wub$, we will want to identify geometrically the pullbacks of the classes in $H^4(\oa)$ to $\osi{K3}$ under inclusion, further pulled back to $\wub$ under $f(\si{K3})^*$. The pullback of $L$ is of course still $L$, while for the boundary we will now follow the approach and use the computations of~\cite{ergrhu2}, which we now recall. For this we will work on the level $2$ cover~$\oa{}(2)$ of~$\oa$, and recall that the preimage of the irreducible boundary divisor $D\subset\oa$ there is the union $\cup D_i$ of $2^{2g}-1$ irreducible components indexed by points of $(\ZZ/2\ZZ)^{2g}\setminus\lbrace 0\rbrace$.

We have already discussed the triple intersections of the form $D_i \cap D_j \cap D_k$ and
that the image of such an intersection  under the map $\oa(2) \to \oa$ can either lie over  ${\Sat[2]}$ or over ${\Sat[1]}$, depending on whether we are in the {\em local} or {\em global} case, these in turn corresponding to the
cones $\si{K3}=\langle x_1^2,x_2^2,(x_1-x_2)^2\rangle$  and $\si{1+1+1}= \langle x_1^2,x_2^2, x_3^2\rangle$. As before we will write these intersections as $D_i \cap D_j \cap D_{i+j}$ or $D_i \cap D_j \cap D_k$ respectively.
With this notation each irreducible component $Z$ of the preimage of~$\osi{K3}$ in~$\oa{}(2)$ is the intersection of a triple of irreducible boundary components of the form $D_i\cap D_j\cap D_{i+j}$. Here we use the following geometric interpretation:
The divisors $D_i$ and $D_j$ correspond to the cone $\si{1+1}=\langle x_1^2,x_2^2 \rangle$ and the corresponding stratum is a $\CC^*$-bundle over $\ua[1] \times_{\ab[1]} \ua[1]$ which is compactified first to a $\PP^1$-bundle and
then extended to the cusp. The $\PP^1$-bundle is the $\PP^1$-bundle which compactifies the Poincar\'e bundle by adding the section at infinity. Intersecting with~$D_{i+j}$ then cuts out the $0$-section of the $\PP^1$-bundle.
Using this convention we identify the restrictions of boundary divisors of~$\oa{}(2)$ to~$\osi{K3}$, pulled back to $\wub$.
We note that these equalities on $\calY$ were proven in~\cite{ergrhu2}, while we now claim that they also hold on $\wub$.
\begin{lm}\label{lem:restrictionDitoZ}
The following holds:
\begin{equation}
 D_i|_{\wub}=-2T_1-P, \, D_j|_{\wub}=-2T_2-P, \, D_{i+j}|_{\wub}=P, \, \sum_{k\ne i,j,i+j} D_k|_{\wub}=12L.
\end{equation}
\end{lm}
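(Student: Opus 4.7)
The plan is to derive these four restrictions from the corresponding identities on the open part $\calY = \ua[1]\times_{\ab[1]}\ua[1]$, which were established in \cite{ergrhu2} as mentioned in the paper, and then promote them to $\wub$ via the small resolution $\pi\colon \wub \to \oub$. First I would note that $\calY$ sits as an open subscheme in both $\oub$ and $\wub$, and that $\wub\setminus\calY$ consists of the fiber over the cusp $\{i\infty\}\subset\oa[1]$ together with the exceptional curves of $\pi$. Since each $D_k$ pulls back to a well-defined Cartier divisor on $\oa(2)$, its restriction to $\wub$ is determined, and by \Cref{prop:rankPictildeY} any identity in $\Pic_\RR(\calY)$ extends to an identity in $\Pic_\RR(\wub)$ up to an additive correction by some combination of $L$ (supported over the cusp) and $\wDelta$ (supported near the exceptional curves of $\pi$).

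Next I would rule out a $\wDelta$-correction by using the toric description from the proof of \Cref{lm:osiK3geometry}. Near the $A_1$-singularities of $\oub$, the three boundary divisors $D_i, D_j, D_{i+j}$ correspond to the rays $\langle x_1^2\rangle, \langle x_2^2\rangle, \langle(x_1-x_2)^2\rangle$ in the cone $\si{K3}$. The stratum $\si{1+1}$ is realised as a $\CC^*$-bundle over $\ua[1]\times_{\ab[1]}\ua[1]$ compactified to the $\PP^1$-bundle $\PP(\calO\oplus\calP)$; the divisor $D_{i+j}$ cuts out the zero section of this $\PP^1$-bundle, which is disjoint from the loci where $\pi$ is not an isomorphism. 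Hence $D_{i+j}|_\wub$ equals the class of the zero section of the Poincar\'e bundle, and this is precisely the class $P$ of \eqref{eq:Poincare} because our normalization $P = T_1+T_2-\wDelta-L$ was chosen so that $P$ restricts trivially to the $0$-section $Z_1\cap Z_2$.

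For the restrictions of $D_i$ and $D_j$, I would apply the standard normal bundle computation for the toroidal boundary: along a component of the boundary of $\oa$, the normal bundle is $-2$ times the universal theta class trivialized along the zero section, with a correction by the Poincar\'e class accounting for the $\PP^1$-bundle structure on $\si{1+1}$. On $\calY$ this gives $D_i|_\calY = -2T_1-P$ and $D_j|_\calY = -2T_2-P$ by~\cite{ergrhu2}; by the extension argument above, the same identities hold on $\wub$. For the last identity, the boundary divisors $D_k$ with $k\neq i,j,i+j$ meet the component $Z$ of the preimage of $\osi{K3}$ only along the fiber of $\wub\to\oa[1]$ over the cusp $\{i\infty\}$, whose class is $F=12L$ by the proof of \Cref{prop:intersectionX1}; summing gives $\sum_{k\neq i,j,i+j} D_k|_\wub = 12L$.

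The main obstacle is the careful bookkeeping of the stacky factors and the exact Poincar\'e normalization --- in particular verifying that no additional multiple of $L$ creeps into the first three identities when extending from $\calY$ to $\wub$. As a consistency check I would sum all four relations to recover $D|_\wub = 12L - 2(T_1+T_2) - P$, which combined with $M = 12L-D$ yields $M|_\wub = 2(T_1+T_2) + P$, and then verify that this agrees with equations \eqref{equ:DrestV} and \eqref{equ:MrestV} upon pulling back to $V$; any discrepancy would flag an error in the signs or normalizations.
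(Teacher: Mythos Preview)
Your overall strategy matches the paper's: start from the identities on the open part $\calY$ established in \cite{ergrhu2}, then control the possible correction terms when passing to $\wub$. However, there are two real problems.

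First, the $\wDelta$-correction you worry about is a red herring. The exceptional curves of the small resolution $\pi\colon\wub\to\oub$ have codimension~2 in the threefold $\wub$ (and in fact lie inside the cuspidal fiber), so they cannot contribute to a divisor-class correction. The complement $\wub\setminus\calY$ is, as a divisor, just the fiber over the cusp, whose class is $12L$. Hence the only possible correction term is a multiple $\alpha L$, exactly as the paper sets it up.

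Second, and this is the genuine gap: you never rule out that $\alpha L$ for the first three identities. You explicitly flag this as ``the main obstacle'' but do not resolve it. The paper's key idea here is to restrict to the $0$-section $Z_1\cap Z_2\cong\oa[1]$. On the one hand, by \cite[Prop.~5.1]{ergrhu2} the restriction of $D_i$ to itself is $-\Theta$ (trivialized along the $0$-section), and since the $0$-section meets $\beta_3$ only in codimension~2, the restriction of $D_i$ to the \emph{entire} $0$-section is trivial. On the other hand, $T_1$ and $P$ are by definition trivialized along the $0$-section. Comparing forces $\alpha=0$. This argument is missing from your proposal.

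Finally, your proposed consistency check does not work: $V=\ox[1]\times\ox[1]/S_2$ is a four-dimensional \emph{product}, whereas $\wub$ is a three-dimensional \emph{fiber product}, and there is no natural map between them along which to compare \eqref{equ:DrestV} and \eqref{equ:MrestV} with your sum. Moreover, even on $\wub$ itself, summing all four relations cannot detect the error you are worried about: an $\alpha L$ in $D_i|_{\wub}$ and a $-\alpha L$ in $D_j|_{\wub}$ would cancel.
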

\begin{proof}
We work on $\oa(2)$ and denote by $Z$ some component of the preimage of $\osi{K3}$. Note that $Z$ is the image of a map
$f(\si{K3})(2):   \wub(2) \to \oa[3](2)$, which covers the map $f(\si{K3}):   \wub \to \oa[3]$ from \Cref{lm:osiK3geometry}. We also denote by $Z^0$ the open part of $Z$ which is the preimage of the open stratum
$\si{K3}$. We first notice that the union of the boundary divisors $D_k$, for all $k\neq i,j,i+j$ cut out the fiber $\osi{K3+1}$ over the cusp of $\oa[1]$ and hence
$$
\sum_{k\ne i,j,i+j} D_k|_Z=12L.
$$
By~\cite[Lem.~5.4]{ergrhu2} we further have:
$$
 D_i|_{Z^0}=(-2T_1-P)|_{Z^0}, \, D_j|_{Z^0}=(-2T_2-P)|_{Z^0}, \, D_{i+j}|_{Z^0}=P|_{Z^0}.
$$
We claim that these equalities hold on all of~$Z$. We shall prove this in the first case, the others being the same. Since the difference between $Z$ and $Z^0$ is the fiber over the cusp, it follows from the above equation that
$$
D_i|_Z=(-2T_1-P + \alpha L)|_Z\,.
$$
We claim that $\alpha = 0$. Indeed, by \cite[Prop.~5.1]{ergrhu2}, see also \cite[Prop.~3.2]{huleknef} we have the equality
$$
D_i|_{D_i \setminus \beta_3(2)} = -\Theta_{D_i \setminus \beta_3(2) }
$$
in $\operatorname{CH}(\oa(2)\setminus \beta_3(2))$,
where we recall that $D_i \to \oa[2](2)$ is the universal abelian variety (which is the universal Kummer variety in level $2$), and $\Theta$ is the universal theta divisor extended to Mumford's partial compactification and trivialized along the $0$-section.
Since the intersection of $\beta_3(2)$ with the
$0$-section of the universal family (which is isomorphic to $\oa[2]$) is the fiber over~$\ab[0]$, and hence has codimension $2$, it follows that the
restriction of $D_i$ to the entire $0$-section is trivial. Since also~$T_1$ and~$P$ are trivial over the $0$-section it follows that
$\alpha=0$.
 \end{proof}
 \begin{rem}\label{rem:permutations}
We note that the divisors $-2T_1-P$, $-2T_2-P$ and $P$ are all permuted under the action of the group $G(\si{K3})$.
\end{rem}
We can now compute the homology class of the surface $\osi{K3+1}$, reproving \Cref{lm:classosiK3+1}.
\begin{proof}[Alternative proof of \Cref{lm:classosiK3+1}]
We have already remarked that the class of $\osi{K3+1}$ is the class of the fiber $\osi{K3+1}=F=12L\in H^2(\wub)$ over the cusp $\ab[0]$ and hence the restriction of $L$ to  $\osi{K3+1}$ is trivial. This gives, in particular,  immediately the first three intersection numbers being zero.

Using the convention explained above, we can, using \eqref{eq:Poincare}, now compute:
\begin{equation}\label{eq:DK3}
\begin{aligned}
 D|_{\wub}&=\sum D_a|_{\wub}=\left(D_i+D_j+D_{i+j}+\sum_{k\ne i,j,i+j} D_k\right)|_{\wub}\\ &=-2T_1-P-2T_2-P+P+12L=-3T_1-3T_2+\wDelta+13L\,,
\end{aligned}
\end{equation}
where we worked with the pullback from $\osi{K3}$ to $\wub$, furthermore pulled back to the level $2$ cover.

We note that of course as $D|_{\wub}$ is the pullback of $D|_{\osi{K3}}$, by \Cref{lm:cohY} its class must be a linear combination of the classes $L$ and $Z_1+Z_2+\wDelta^-$, and the above formula identifies this linear combination.

The computation of the restriction of the class $\beta_2$ to $\osi{K3}$ turns out to have one crucial subtlety, and we will treat this as separate \Cref{lm:beta2K3} below.

To finish the proof of the Proposition, we use the intersection numbers computed in \Cref{prop:tildeY}. Taking also the factor $1/12$ into account as discussed in \Cref{rem:stacky12},  and using repeatedly~\eqref{eq:Poincare} and that $L^2|_{\wub}=0$, we compute:
$$
\begin{aligned}
12D^2.\osi{K3+1}&=12L(-3T_1-3T_2+\wDelta + 13 L)^2 \\ & = 12L(9T_1^2+9T_2^2+\wDelta^2+18T_1T_2-6(T_1+T_2)\wDelta)\\
 & =12\cdot (0+0+0+18-6-6)\cdot\frac{1}{24}=12\cdot 6\cdot\frac{1}{24}=3.
 \end{aligned}
$$
Similarly
$$
\begin{aligned}
 12\beta_2.\osi{K3+1}&=12L(4T_1T_2-P^2+12L(-3T_1-3T_2+\wDelta))\\
 &=12L(4T_1T_2-(2T_1T_2-2\wDelta(T_1+T_2))\\&=12\cdot (4-(2-4))\cdot\frac{1}{24}=3\,.
\end{aligned}
$$
The equality $M^2. 12\osi{K3+1}=1/4$ follows immediately from the definition of $M$ and the numbers already computed.
This proves the Proposition.
\end{proof}
\begin{lm}\label{lm:beta2K3}
The pullback to $\wub$ of the restriction of $\beta_2$ to $\osi{K3}$ is given by
\begin{equation}\label{eq:beta2K3}
 \beta_2|_{\wub}=4T_1T_2-P^2-6L(3T_1+3T_2-\wDelta)\,.
\end{equation}
\end{lm}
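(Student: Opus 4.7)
My plan is to compute $\beta_2|_{\wub}$ directly by working on a level cover, expressing $\beta_2$ (up to normalization) as $\sum_{a<b} D_a D_b$, and restricting using \Cref{lem:restrictionDitoZ}. The sum splits naturally according to whether the indices lie in $I := \{i, j, i+j\}$ (the three boundary components whose triple intersection gives the component of the preimage of $\osi{K3}$) or in its complement $\bar I$. This gives three contributions, which will be added up to produce the claimed class.

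For pairs within $I$, the three products expand using the explicit restrictions $D_i|_{\wub} = -2T_1 - P$, $D_j|_{\wub} = -2T_2 - P$, $D_{i+j}|_{\wub} = P$; the $T_iP$ and $P^2$ cross-terms cancel in pairs, giving the $S_3$-invariant expression $4T_1T_2 - P^2$ (the elementary symmetric function $e_2$ in the three permuted classes, consistent with \Cref{rem:permutations}). For pairs with one index in $I$ and one in $\bar I$, one factors the sum as
$$ (D_i + D_j + D_{i+j})|_{\wub} \cdot \Bigl(\sum_{k \in \bar I} D_k|_{\wub}\Bigr) = (-2T_1 - 2T_2 - P)(12L), $$
which simplifies to $-12L(3T_1 + 3T_2 - \wDelta)$ using $P = T_1 + T_2 - \wDelta - L$ and the vanishing $L^2 = 0$ on $\wub$ (since $L$ pulls back from the one-dimensional base $\oa[1]$). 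For pairs within $\bar I$, squaring the identity $\sum_{k \in \bar I} D_k|_{\wub} = 12L$ gives $(12L)^2 = 0$, reducing this contribution to $-\tfrac12 \sum_{k \in \bar I} D_k^2|_{\wub}$.

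The main obstacle is evaluating this last self-intersection sum, since \Cref{lem:restrictionDitoZ} only determines the total $\sum_{k \in \bar I} D_k|_{\wub}$, not the individual terms. A clean route is to invoke \Cref{lm:cohY}: the $G(\si{K3})$-invariant part $H^4(\osi{K3}) \subset H^4(\wub)$ is two-dimensional, spanned by $4T_1T_2 - P^2$ and $L(Z_1 + Z_2 + \wDelta^-)$. Writing $\beta_2|_{\wub} = a(4T_1T_2 - P^2) + bL(Z_1+Z_2+\wDelta^-)$, parts (1) and (2) already force $a = 1$, so one further intersection-number calculation suffices to pin down $b$; the value $L \cdot \beta_2 \cdot \sigma_3 = \tfrac{1}{48}$ from van der Geer, combined with $\sigma_3 = \osi{K3} + \beta_3$ and $L|_{\beta_3} = 0$, does the job. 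Alternatively, one can tackle part (3) head-on via the adjunction relation $D_k|_{D_k} = -\Theta$ of \cite[Prop.~5.1]{ergrhu2} together with the transitive action of the deck transformation group on the components indexed by $\bar I$, exploiting the geometric fact that each $D_k|_{\wub}$ for $k \in \bar I$ is supported in the fiber $F = 12L$ over the cusp. Either route should yield $\sum_{k \in \bar I} D_k^2|_{\wub} = 12L(3T_1+3T_2-\wDelta)$, so that part (3) contributes $-6L(3T_1+3T_2-\wDelta)$; adding the three parts gives $\beta_2|_{\wub} = 4T_1T_2 - P^2 - 6L(3T_1 + 3T_2 - \wDelta)$ as claimed, modulo $L^2 = 0$.
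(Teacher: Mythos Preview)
Your decomposition into three parts and the computation of parts (1) and (2) are exactly what the paper does. The gap is in part (3), and unfortunately both routes you propose have problems.

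The first route --- using van der Geer's number $L \cdot \beta_2 \cdot \sigma_3 = \tfrac{1}{48}$ to determine the coefficient $b$ of $L(Z_1+Z_2+\wDelta^-)$ --- cannot work. Since the unknown summand is of the form $L \cdot x$, intersecting with $L$ kills it: $L \cdot bL(Z_1+Z_2+\wDelta^-) = bL^2(\ldots) = 0$. So this intersection number depends only on $a$, not on $b$. The paper in fact remarks explicitly (just after the proof) that $\beta_2 \cdot \osi{K3+1} = 12\, L\beta_2\sigma_3$ ``is insensitive to adding any class to $\beta_2$ of the form $L \cdot x$''. To pin down $b$ you need an intersection number not involving $L$; this is why the paper subsequently checks $D^3|_{\osi{K3}}$ and $D\beta_2|_{\osi{K3}}$ against van der Geer's values.

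The second route also has an arithmetic inconsistency: if $\sum_{k \in \bar I} D_k^2|_{\wub} = +12L(3T_1+3T_2-\wDelta)$, then part (3) contributes $-6L(\ldots)$, and adding parts (1)--(3) gives $-18L(\ldots)$, not $-6L(\ldots)$. The correct sign is $\sum_k D_k^2 = -12L(\ldots)$, so part (3) contributes $+6L(\ldots)$. More substantively, the sketch via $D_k|_{D_k} = -\Theta$ is vague; this is a relation on $D_k$, not on $D_k \cap \wub$, and it is not clear how you extract the needed self-intersections from it.

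The paper's treatment of part (3) is different and more geometric: it observes that for distinct $a,b \in \bar I$ the locus $D_a \cap D_b$ restricted to $\wub$ is supported on the exceptional locus $E$ of the small resolution $\wub \to \oub$, so that $\sum_{a<b,\, a,b \in \bar I} D_aD_b|_{\wub} = E$ as a class. Since $E \cdot L = 0$, the class of $E$ lies in $\RR\, L(Z_1+Z_2+\wDelta^-) = \RR\, L(3T_1+3T_2-\wDelta)$, and the coefficient is determined by the already-established intersection number $E \cdot \wDelta^- = \tfrac12$ from \Cref{lm:EDelta}, giving $E = 6L(3T_1+3T_2-\wDelta)$. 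This geometric identification of the third sum with $E$, together with \Cref{lm:EDelta}, is the key input you are missing.
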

\begin{proof}
To perform this computation, we go to the level cover $\oa{}(2)$ and use the same notation as for the derivation of~\eqref{eq:DK3}. The subtlety
here is the exceptional divisor $E$ which comes from the small blowup of $\oub$.

Again using \eqref{eq:Poincare} and the fact that $L^2=0$ we find that
$$
\begin{aligned}
 \beta_2|_{\wub}=&\sum_{a<b} D_aD_b|_{\wub}\\&=(-2T_1-P)(-2T_2-P)+(-2T_1-P)P+(-2T_2-P)P  \\ &\ +12L(-2T_1-P-2T_2-P+P) + \sum_{a<b,\ a,b \notin \{i,j,i+j\}} D_aD_b&\\
 &=(4T_1T_2-P^2)+12L(-3T_1-3T_2+\wDelta)\, + E.
 \end{aligned}
$$
Here we think of the codimension two locus $E \in H_2(\wub)\simeq H^4(\wub)$, via Poincar\'e duality.
Note that  this fits with \Cref{lm:cohY}  where we have shown that  the first two summands of this sum generate $H^4(\wub,\RR)$.
(We also refer the reader to~\cite{grza} for a discussion of why the class $4T_1T_2-P^2$ should naturally appear here).

It thus remains to determine the class of $E$. By \Cref{lm:cohY} and the fact that $E.L=0$ it follows that
$E$ is a multiple of $L(Z_1+Z_2 +\wDelta^-)$, or, using \eqref{eq:aDelta} and $L^2=0$, equivalently of $L(3T_1+3T_2-\wDelta)$.
By \Cref{lm:EDelta} the intersection number $E.\wDelta^-=1/2$, and a straightforward calculation then gives
\begin{equation}\label{equ:classofE}
 E=6L(3Z_1+3Z_2-\wDelta)\,.
\end{equation}
and thus the claim.
\end{proof}
Note that the only intersection number involving $\beta_2$ that we have so far computed is $\beta_2.\osi{K3+1}$. As $\osi{K3+1}=12L$ on $\osi{K3}$, this intersection number in particular is insensitive to adding any class to $\beta_2$ of the form $L.x$, and in particular does not notice the subtlety regarding the exceptional locus~$E$ appearing in $\beta_2$, as described above. As these intersection computations on $\osi{K3}$ become very laborious, to avoid mistakes we have computed the intersection numbers from \Cref{prop:tildeY}, the expression~\eqref{eq:Poincare} for the class of the Poincar\'e bundle, and the above formulas~\eqref{eq:DK3} and~\eqref{eq:beta2K3} for the restrictions of $D$ and $\beta_2$ to $\wub$, using Maple. We then verified that
$$
\sigma_1^3(\sigma_3-\beta_3)=\frac{D|_{\wub}^3}{12}=\frac{31}{48}
$$
and
$$
\sigma_1\sigma_2(\sigma_3-\beta_3)=\frac{D|_{\wub}\beta_2|_{\wub}}{12}= \frac{5}{16},
$$
where on the left-hand-side these are the notation of~\cite{vdgeerchowa3}, so that $\sigma_1=D$, $\sigma_2=\beta_2$, and $\osi{K3}=\sigma_3-\beta_3$, as discussed above, and the factor of~$\frac{1}{12}$ comes from the group described in \Cref{lm:osiK3geometry}. We note that these computations provide a highly non-trivial check for our description of the intersection theory on $\wub$ and for the computation of classes $D|_{\wub}$ and $\beta_2|_{\wub}$ in~\eqref{eq:DK3} and~\eqref{eq:beta2K3}. While the computation of the class of the surface $\osi{K3+1}$ above was insensitive to any class in $\beta_2|_{\wub}$ that is a multiple of~$L$ (simply because $L^2=0$), these computations matching the numbers computed by van der Geer do involve all the summands of the class $\beta_2|_{\wub}$.

We conclude this section by recomputing the class of the surface $S_D$, thereby also shedding new light on a different interpretation the geometry of this surface.
For this we consider in $\wub$ the union of the preimages of the two $0$-sections on the two factors of $\ox[1]\times_{\oa[1]}\ox[1]$ and the anti-diagonal. These three surfaces are permuted by the group $G(\si{K3})$ and we denote their
image in $\oa$ by $S'_D$. By inspection the surface $S'_D$ parameterizes semi-abelic threefolds which are the product of a fixed elliptic curve with a singular semi-abelic surface. Hence the surfaces $S'_D$ and $S_D$ coincide, but
bearing the different construction of $S'_D$ in mind, we retain this notation for the rest of this section.

We will now use the elaborately described geometry of~$\osi{K3}$ to compute the intersection numbers of $S'_D$ with the pullback of our classes forming a basis of  $H^4(\oa)$ and thus reconfirm our computation of the class
of $S_D$ in  \Cref{prop:intersections}. This serves as an elaborate check on our numerics, and the techniques we use.
\begin{prop}\label{prop:classS'D}
The surface $S_D'$ has the following intersection numbers on $\oa$:
\begin{equation*}
 L^2.S_D'=0, \, \, LD.S_D'=-\frac{1}{48}, \, D^2.S_D'= - \frac{11}{24}, \, \beta_2.S_D'=-\frac18,
\end{equation*}
\begin{equation*}
 LM.S_D'= \frac{1}{48}, \,M^2.S_D'=\frac{1}{24}.
\end{equation*}
\end{prop}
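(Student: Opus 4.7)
The strategy is to pull everything back to $\wub$ via the finite morphism $f(\si{K3}): \wub\to\osi{K3}\subset\oa$ constructed in \Cref{lm:osiK3geometry}, since the intersection theory of $\wub$ is completely described by \Cref{prop:tildeY} and the formulas \eqref{eq:DK3}, \eqref{eq:beta2K3} for the restrictions of $D$ and $\beta_2$ are already available. By construction the surface $S_D'$ is the image of the cycle $Z_1+Z_2+\wDelta^-$, whose three components are precisely the $S_3$-orbit of $Z_1$ under the generators of $G(\si{K3})$ listed in \eqref{equ:generatorsGK3}. For any $\alpha\in H^4(\oa)$ the class $f^*(\alpha)$ is $G(\si{K3})$-invariant, so the three numbers $f^*(\alpha)\cdot Z_1$, $f^*(\alpha)\cdot Z_2$, $f^*(\alpha)\cdot\wDelta^-$ are equal; combining this with the stacky factor $1/12$ of \Cref{rem:stacky12} yields
\[
\alpha\cdot S_D'\;=\;\tfrac{1}{12}\,f^*(\alpha)\cdot(Z_1+Z_2+\wDelta^-)\;=\;\tfrac{1}{4}\,f^*(\alpha)\cdot Z_1.
\]

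The four required numbers are then computed by expanding against~$Z_1$. For $L^2$ the table shows $L^2\cdot Z_1=0$, giving $L^2\cdot S_D'=0$. For $L\cdot D|_{\wub}\cdot Z_1$ one substitutes the expansion \eqref{eq:DK3} and reads each term from \Cref{prop:tildeY}; all contributions collapse to $-\tfrac{1}{12}$, so that $LD\cdot S_D'=-\tfrac{1}{48}$ and hence $LM\cdot S_D'=-LD\cdot S_D'=\tfrac{1}{48}$. For $D|_{\wub}^2\cdot Z_1$ one rewrites $D|_{\wub}=-3Z_1-3Z_2+\wDelta+7L$ using $T_i=Z_i+L$, expands the square, and applies the cycle identities $Z_i^2=-LZ_i$ and $L^2=0$ from the left-hand column of \Cref{prop:tildeY}; summing term by term gives $-\tfrac{11}{6}$, whence $D^2\cdot S_D'=-\tfrac{11}{24}$ and $M^2\cdot S_D'=144L^2\cdot S_D'-24LD\cdot S_D'+D^2\cdot S_D'=\tfrac{1}{24}$. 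For $\beta_2|_{\wub}\cdot Z_1$ the computation involves expanding $P^2=(Z_1+Z_2+L-\wDelta)^2$ inside \eqref{eq:beta2K3}; after substitution the $P^2\cdot Z_1$ contribution vanishes, leaving only the terms $-18LZ_1\cdot Z_1 -18LZ_2\cdot Z_1 +6L\wDelta\cdot Z_1=-\tfrac{1}{2}$, so $\beta_2\cdot S_D'=-\tfrac{1}{8}$.

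The main obstacle is the bookkeeping in the $\beta_2$-computation, because one must carefully handle the Poincar\'e class squared and the numerous cycle identifications of \Cref{prop:tildeY} (notably $Z_1Z_2=Z_1\wDelta=Z_2\wDelta$ and $Z_i^2=-LZ_i$), while also using that the exceptional-locus contribution~$E$ appearing in~\eqref{equ:classofE} vanishes against~$Z_1$ since $Z_1$ does not meet the nodes of~$\oub$. As a consistency check, every number obtained this way agrees with the one already computed for~$S_D=N(S_{DD})$ in \Cref{prop:intersections} via the auxiliary fibration $\oa[1]\times\oa[2]\to\oa$, thereby confirming both $[S_D']=[S_D]$ and the correctness of the classes~\eqref{eq:DK3}, \eqref{eq:beta2K3}.
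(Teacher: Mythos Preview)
Your approach is essentially the same as the paper's: pull back to $\wub$, plug in the formulas \eqref{eq:DK3} and \eqref{eq:beta2K3}, and evaluate with the table of \Cref{prop:tildeY}, dividing by the factor $1/12$ from \Cref{rem:stacky12}. Your extra observation that the $S_3$-action permutes $Z_1,Z_2,\wDelta^-$, so that one may compute $\tfrac14 f^*(\alpha)\cdot Z_1$ instead of $\tfrac1{12}f^*(\alpha)\cdot(Z_1+Z_2+\wDelta^-)$, is a pleasant simplification that makes the computation tractable by hand (the paper expresses $S_D'$ as $3T_1+3T_2-\wDelta-4L$ and runs the full product through Maple). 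All four numbers you obtain are correct.

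One remark in your write-up is wrong, however, and you should remove it. You claim that the class $E$ of \eqref{equ:classofE} ``vanishes against $Z_1$ since $Z_1$ does not meet the nodes of $\oub$.'' In fact the class $6L(3Z_1+3Z_2-\wDelta)$ pairs with $Z_1$ to give $18\cdot 0+18\cdot\tfrac1{24}-6\cdot\tfrac1{24}=\tfrac12$, not zero. Fortunately your actual computation never uses this: you work directly with \eqref{eq:beta2K3}, in which the $E$-contribution is already absorbed, and the answer $\beta_2\cdot S_D'=-\tfrac18$ comes out correctly. (You also silently drop the $4T_1T_2\cdot Z_1$ term; it does vanish, since $T_1T_2\cdot Z_1=Z_1Z_2\cdot Z_1+LZ_1\cdot Z_1+LZ_2\cdot Z_1=-\tfrac1{24}+0+\tfrac1{24}=0$, but it is worth saying so.)
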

\begin{proof}
We recall that by definition the preimage in $\wub$ of $S_D'$ is the union of the two $0$-sections $Z_1,Z_2$, and the antidiagonal $\wDelta^-$. Equalities~\eqref{eq:Poincare},~\eqref{eq:aDelta}, and $T_i=Z_i+L$ therefore give
\begin{equation}\label{equ:classofS0}
S_D'=Z_1 + Z_2 + \wDelta^- = 3T_1+3T_2-\wDelta -4L\, .
\end{equation}

The class $L^2$ is zero on $\osi{K3}$ and this immediately gives $S_D'.L^2=0$.

We recall that the classes of $D$ and $\beta_2$ restricted to $\osi{K3}$ are given by~\eqref{eq:DK3} and~\eqref{eq:beta2K3}, respectively.
Using \Cref{prop:tildeY} and remembering to divide by $12$, as explained in \Cref{rem:stacky12}, the computation of the intersection numbers on $\osi{K3}$ is now purely mechanical.
To avoid arithmetic errors we have determined the intersection theory on $\wub$ using a small Maple program. The results are:
$$
LD.S_D'=\frac{1}{12}L(-3T_1-3T_2+\wDelta+13L)(3T_1+3T_2-\wDelta -4L)=- \frac{1}{48}
$$
and
$$
D^2.S_D'=\frac{1}{12}(-3T_1-3T_2+\wDelta+ 13L)^2(3T_1+3T_2 -\wDelta -4L)=- \frac{11}{24}\,.
$$
Finally, we have
 $$
  \beta_2.S_D'=\frac{1}{12}(4T_1T_2-P^2- 6L(3T_1 + 3T_2-\wDelta)\cdot(3T_1+3T_2 -\wDelta-4L) =-\frac18\,.
$$
Form this we immediately obtain
$$
LM. S_D'=L(12L-D)S_D'.L=-LD.S_D'=\frac{1}{48}
$$
and
$$
M^2.S_D'=(12L-D)^2.S_D'=-24LD.S_D'+D^2.S_D'=\frac{1}{24}\,.
$$
\end{proof}

\section{Extremal effective rays of $\Eff(\oa)$}\label{sec:extremality}
In this section we prove our main theorem, showing that all five surfaces $S_A,S_F,S_D,\osi{C4},\osi{K3+1}$ are indeed extremal effective rays of the cone $\oE(\oa)$.
Recall that it does not immediately follow that the classes of these five surfaces
generate  $\oE(\oa)$. Nevertheless, we conjecture that they in fact do generate this cone, and give some further evidence for this claim in \Cref{rem:evidence}. The arguments for the extremality are heavily dependent on the fact that the classes $L^2,LM,M^2$ are known to be nef. We furthermore continue to use the fact that the contraction $\pi:\oa\to\Sat$ to the Satake compactification is given by a multiple of $L$, so that $L$ is in fact a pullback from~$\Sat$.

Crucially, we use another morphism that was studied in the work of Shepherd-Barron~\cite{shepherdbarron}, and further investigated by him in detail in~\cite{sbexceptional}. This is the Kummer morphism: geometrically, given a principally polarized abelian variety, the morphism sends it to the corresponding Kummer variety, thought of for example as an element of $CH^*(\PP^7)$. Thinking this way, it is a priori unclear how this morphism extends to the compactification, and we refer to~\cite{perna} for a detailed study of this question. The way we will think about this morphism is following~\cite{sbexceptional}, and reliant on the main theorem from there:

\begin{thm}[Shepherd-Barron {\cite[Thm. 1.1(1)]{sbexceptional}}] \label{thm:sb}
The exceptional locus of the line bundle $M$ is equal to $N(\oa[1]\times\oa[2])\subset\oa[3]$.
\end{thm}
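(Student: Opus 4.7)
The starting point is Shepherd-Barron's earlier determination that $\Nef[1](\Perf[g])=\RR_{\ge 0}L_g+\RR_{\ge 0}M_g$, whose dual is $\oE[1](\Perf[g])=\RR_{\ge 0}[C_F]+\RR_{\ge 0}[C_A]$, with pairings $L_g\cdot C_F=0=M_g\cdot C_A$. Since $M_g$ sits on a facet of the nef cone, any curve $C$ with $M_g\cdot C=0$ must be numerically a multiple of $C_A=\oa[1]\times\{[B]\}$. As $[B]$ ranges over $\Perf[g-1]$, these curves sweep out precisely the image $N(\oa[1]\times\Perf[g-1])$, giving immediately the inclusion $N(\oa[1]\times\Perf[g-1])\subset\op{Exc}(M_g)$: a nef line bundle whose restriction to a complete curve has degree zero cannot be ample in any neighborhood of that curve.

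For the reverse inclusion, I would show that $M_g$ is semi-ample and that the associated morphism $\phi=\phi_{|mM_g|}\colon\Perf[g]\to Y$ contracts nothing else. Semi-ampleness can be established by producing enough sections of $mM_g$ via Siegel modular forms of slope $12$, but the more conceptual route is to realize $\phi$ geometrically as the compactified Kummer / $|2\Theta|$-morphism $[A]\mapsto[A/\pm 1\subset\PP^{2^g-1}]$, whose natural polarization is a positive multiple of $M_g$. Extending this construction over the toroidal boundary, in particular to torus rank $1$ and $2$ semiabelic varieties, is where the main technical work lies.

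Once $\phi$ is in place, finiteness off $N(\oa[1]\times\Perf[g-1])$ should follow from a projective-Torelli analysis of polarized Kummer varieties. For a non-decomposable ppav $A$, the polarized Kummer $A/\pm 1\subset\PP^{2^g-1}$ determines $A$ up to isomorphism, so $\phi$ is injective at $[A]$ with positive-dimensional image. For a decomposable $A=E\times B$ with $E$ an elliptic curve, the Kummer of $E$ is projectively a conic, hence projectively independent of $E$, so $\phi([A])$ depends only on $[B]$ --- which exactly contracts the $\oa[1]$-fiber to a point and identifies the positive-dimensional fibers of $\phi$ with the fibers of $N$ restricted to the decomposable locus.

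The hard part will be twofold. First, semi-ampleness of $M_g$ must be verified on the toroidal boundary, where both the abelian variety and its Kummer simultaneously degenerate, and one has to check that no accidental positive-dimensional fibers of $\phi$ appear in the deepest strata (corresponding to highly degenerate semiabelic threefolds in genus $3$). Second, one must handle the stack-theoretic subtleties introduced by $N$, which is finite but typically $2{:}1$ or branched, and by the $-\mathrm{id}$ and factor-swap automorphisms, so that the morphism $\phi$ descends correctly and its fibers are identified with the correct equivalence classes. These technical obstructions are exactly the content of Shepherd-Barron's detailed analysis in the cited paper.
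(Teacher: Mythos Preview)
This theorem is not proven in the paper: it is quoted verbatim from Shepherd-Barron's preprint \cite{sbexceptional} and used as a black box in the subsequent extremality arguments of \Cref{sec:extremality}. There is therefore no proof in the present paper to compare your proposal against.

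That said, your sketch is broadly consonant with the approach the paper itself attributes to Shepherd-Barron. The paragraph immediately preceding the theorem statement explicitly identifies the relevant morphism as the Kummer morphism $[A]\mapsto[A/\pm1\subset\PP^{2^g-1}]$ and points to \cite{perna} for its extension to the toroidal compactification, so the geometric picture you outline---semi-ampleness of $M$ realized by the compactified Kummer map, projective Torelli for non-decomposable Kummers, and collapse of the elliptic factor on the decomposable locus---is exactly the intended one. Your identification of the two genuinely hard points (controlling the morphism over the deep boundary strata, and the stack-theoretic descent issues around $N$ and the $\pm\mathrm{id}$ automorphisms) is accurate; these are precisely what \cite{sbexceptional} works out in detail and what the present paper deliberately does not reproduce.
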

We recall here that by definition the exceptional locus of a nef line bundle $M$ on a variety $X$ is the union of all subvarieties $Z\subset X$ such that $M^{\dim Z}.Z=0$. In particular, by definition any subvariety $Z\subset X$ that is contracted to a smaller-dimensional variety under the map given by the linear system~$|mM|$ is contained in the exceptional locus. In this terminology, the exceptional locus of the nef line bundle $L$ on $\oa$ is equal to $\beta_1=\partial\oa$.
To avoid confusion, we note that in~\cite{sbexceptional} the notation $H_g$ is used for what we call $L$, and the notation $L_g$ is used for what we call $M$.

\medskip
We first prove the following auxiliary statement.
\begin{lm}\label{lm:E111}
The cone of effective surfaces on $\osi{1+1+1}$ is
$$
  \oE(\osi{1+1+1})=\Eff(\osi{1+1+1})=\RR_{\ge 0}\osi{K3+1}+\RR_{\ge 0}\osi{C4}\,.
$$
\end{lm}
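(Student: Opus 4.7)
The plan is to exploit the fact that $\osi{1+1+1} = \beta_3 = \pi^{-1}(\ab[0])$ is, up to a finite quotient, a complete toric threefold. By construction of the perfect cone toroidal compactification, the open stratum $\si{1+1+1}$ is a quotient of the algebraic torus $(\CC^*)^3$ by the finite stabilizer $G = \mathrm{Stab}_{\GL(3,\ZZ)}(\si{1+1+1}) \cong S_3\ltimes(\ZZ/2\ZZ)^3$ (permutations of $x_1,x_2,x_3$ together with sign changes), and its closure $\osi{1+1+1}$ is a $G$-quotient of the complete toric threefold $\widetilde{X}$ whose fan is the star of $\si{1+1+1}$ in the perfect cone decomposition of $\Sym^2_{\ge 0}(\RR^3)$, passed to the 3-dimensional quotient $\Sym^2(\RR^3)/\mathrm{span}(\si{1+1+1})$. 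Completeness of $\widetilde{X}$ follows from the properness of $\beta_3$, and can also be checked directly from the fact that the $\GL(3,\ZZ)$-translates of the maximal cone $\si{K4}$ having $\si{1+1+1}$ as a face cover a full neighborhood of $\mathrm{span}(\si{1+1+1})$.

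The rays of the star fan correspond to the 4-dimensional perfect cones containing $\si{1+1+1}$ as a face. By inspection of \eqref{table:cones}, and confirmed by the stratification \eqref{eq:strataclosures}, these fall into exactly two $G$-orbits, with representatives $\si{K3+1}$ and $\si{C4}$. Consequently, the torus-invariant prime divisors on $\widetilde{X}$ descend, under the finite morphism $\widetilde{X}\to\osi{1+1+1}$, to precisely the two surfaces $\osi{K3+1}$ and $\osi{C4}$.

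I would then invoke the classical fact from toric geometry that on a complete toric variety every effective Weil divisor is linearly equivalent to a non-negative rational combination of the torus-invariant prime divisors (proved by using the torus action on the linear system $|D|$ to produce a $T$-invariant section). Applying this to the pullback of an arbitrary effective class $[S]\in\Eff(\osi{1+1+1})$ on $\widetilde{X}$, and pushing the resulting decomposition back down, gives $[S]\in\RR_{\ge 0}\osi{K3+1}+\RR_{\ge 0}\osi{C4}$. The reverse inclusion is trivial since both classes are manifestly effective, and since the cone on the right is polyhedral it is closed, so it also equals $\oE(\osi{1+1+1})$. The linear independence of the two generators is immediate from \Cref{prop:intersections}, for example from the fact that $M^2\cdot\osi{C4}=\tfrac{1}{48}$ and $M^2\cdot\osi{K3+1}=\tfrac14$ while their intersections with $\beta_2$ have opposite signs.

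The main technical obstacle is controlling Weil divisors through the finite quotient $\widetilde{X}\to\osi{1+1+1}$ and any singularities this introduces; in practice one avoids these subtleties by working with $\QQ$-divisors throughout and, if needed, passing to a finite level cover $\oa(n)$, where the relevant preimage of $\beta_3$ is a disjoint union of toric pieces on which the classical toric argument applies verbatim before descent.
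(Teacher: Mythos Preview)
Your proposal is correct and rests on the same underlying idea as the paper: use the toric $(\CC^*)^3$-action to degenerate any effective surface to one supported on the torus-invariant boundary, which here is exactly $\osi{K3+1}\cup\osi{C4}$. The only real difference is packaging. You pass to the toric cover $\widetilde{X}$, invoke the classical statement that on a complete toric variety the effective divisor cone is generated by the torus-invariant prime divisors, and then descend through the finite quotient. The paper instead works directly on $\osi{1+1+1}$, letting $(\CC^*)^3$ act on the Chow scheme of surfaces and applying Borel's fixed point theorem (following Shepherd-Barron's argument for curves) to produce a $(\CC^*)^3$-invariant representative of any effective class; since the open orbit is three-dimensional, such a representative must be supported on $\osi{K3+1}\cup\osi{C4}$. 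Your route has the advantage of reducing to a textbook toric fact, at the cost of the extra bookkeeping for the finite quotient that you acknowledge; the paper's route avoids the cover entirely but requires citing the Borel fixed point argument on the Chow scheme. Both finish with the same linear-independence check via \Cref{prop:intersections}.
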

\begin{proof}
We will prove that the effective cone $\Eff(\osi{1+1+1})$ is generated by~$\osi{K3+1}$ and~$\osi{C4}$, and it will thus follow that the pseudo-effective cone is the same.

To accomplish this, we apply the argument used by Shepherd-Barron in~\cite[Cor.~2.5]{shepherdbarron} to deal with effective curves. Indeed, recall that $\osi{1+1+1}$ is a toric variety
with an open dense orbit equal to $(\CC^*)^3$.
We thus act by $(\CC^*)^3$ on the Chow scheme of effective curves contained in $\osi{1+1+1}$.
As in~\cite[Cor.~2.5]{shepherdbarron}, Borel's fixed point theorem then guarantees that there is a fixed point of $(\CC^*)^3$ on this Chow scheme. Thus any Chow class of an effective surface must be represented by a surface (which may be reducible) that is invariant, as a set, under the action of $(\CC^*)^3$.
Since the open torus orbit $\si{1+1+1}$ is three-dimensional, this means that every homology class of an effective surface in~$\osi{1+1+1}$ can be represented by an effective surface that is disjoint from $\si{1+1+1}$. But then such a surface must lie in $\osi{1+1+1}\setminus\si{1+1+1}=\osi{K3+1}\cup\osi{C4}$ (where we used the description of the closure from~\eqref{eq:strataclosures}). As both of these are irreducible effective surfaces, it follows that they generate the cone $\Eff(\osi{1+1+1})$. Finally, from the intersection numbers in~\eqref{eq:intersect} it follows that the classes of $\osi{K3+1}$ and $\osi{C4}$ are linearly independent in $H_4(\oa)$, and thus they are also linearly independent in $H_4(\osi{1+1+1})$ (this could also be seen by examining the spectral sequence calculations in~\cite{huto1} to ascertain that $H^2(\osi{1+1+1})=\RR^2$).
\end{proof}

\medskip
We now start proving extremality of our surfaces.
\begin{prop}\label{prop:C4K31extremal}
The surfaces $\osi{C4}$ and $\osi{K3+1}$ are extremal effective rays of $\Eff(\oa)$.
\end{prop}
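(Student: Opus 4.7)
The plan is to reduce extremality of both $\osi{C4}$ and $\osi{K3+1}$ to \Cref{lm:E111}. Since $L,M\in\Nef[1](\oa)$ by Shepherd-Barron, the products $L^2,LM$ are nef codimension-two classes, and both pair to zero with $\osi{C4}$ and $\osi{K3+1}$ by~\eqref{eq:intersect}. Hence for any effective decomposition $S=\sum_i S_i$ into irreducibles (with $S$ equal to $\osi{C4}$ or $\osi{K3+1}$), each $[S_i]$ lies in the face $F=\{c\in\oE(\oa):L^2\cdot c=LM\cdot c=0\}$.

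The face $F$ is contained in the 2-dimensional subspace $V\subset H^4(\oa)$ spanned by $[\osi{C4}]$ and $[\osi{K3+1}]$, which are linearly independent in view of their distinct $M^2$ and $\beta_2$ pairings in~\eqref{eq:intersect}. Writing $[S_i]=a_i[\osi{C4}]+b_i[\osi{K3+1}]$, the main step will be to establish $a_i,b_i\ge 0$ for every irreducible effective $S_i$ in $F$. Assuming this, the equality $\sum_i[S_i]=[\osi{C4}]$ gives $\sum_i a_i=1$ and $\sum_i b_i=0$; since all $b_i\ge 0$ we conclude $b_i=0$ for every $i$, so each $[S_i]$ is a non-negative multiple of $[\osi{C4}]$. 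The symmetric argument, exchanging the roles of $a_i$ and $b_i$, handles $\osi{K3+1}$.

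For this main step, an elementary computation from~\eqref{eq:intersect} gives $a_i=12(M^2\cdot S_i-\beta_2\cdot S_i)$ and $b_i=3M^2\cdot S_i+\beta_2\cdot S_i$, which on the face $F$ coincide with $12\,F_2\cdot S_i$ and $F_1\cdot S_i$ for the conjecturally nef classes $F_1,F_2$ of \Cref{conj:nef}. The proof of $a_i,b_i\ge 0$ should proceed by classifying the irreducible effective surfaces $S_i$ in $F$. The vanishing $L^2\cdot S_i=0$ forces $S_i\subset\beta_1=\partial\oa$ via the Satake contraction $\pi:\oa\to\Sat$ (since $L$ is a pullback of the Hodge class from $\Sat$ and $\pi$ is an isomorphism on the interior), while $LD\cdot S_i=0$ (from $LM\cdot S_i=0$ and $M=12L-D$) together with the fibration structure $\beta_1\cong\ox[2]\to\oa[2]$ strongly constrains the possibilities. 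The cases to handle are: $S_i\subset\beta_3=\osi{1+1+1}$ (where $a_i,b_i\ge 0$ is immediate from \Cref{lm:E111}); $S_i$ a Kummer fiber of $\beta_1\to\ab[2]$, for which $L|_{S_i}=0$ so both vanishings are automatic and a direct computation of $M^2\cdot S_i$ and $\beta_2\cdot S_i=0$ yields $a_i,b_i\ge 0$; and $S_i$ projecting horizontally to a curve in $\oa[2]$.

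The last case will be the main obstacle. Resolving it should require invoking Shepherd-Barron's characterization of the exceptional locus of $M$ (\Cref{thm:sb}), together with the detailed intersection theory on $V=\ox[1]\times\ox[1]/S_2$ and $\wub$ developed in Sections~\ref{sec:ox1ox1} and~\ref{sec:osiK3}, to show that any such horizontal surface is forced into a sub-locus whose effective classes we already control, and hence satisfies the required positivity.
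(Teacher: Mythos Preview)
Your opening reduction matches the paper exactly: both surfaces pair to zero with the nef classes $L^2$ and $LM$, so every summand $S_i$ in an effective decomposition lies in the span of $[\osi{C4}]$ and $[\osi{K3+1}]$, and it remains to show both coefficients are non-negative. The treatment of $S_i\subset\beta_3$ via \Cref{lm:E111} and of a full fiber over a point of~$\calA_2$ also matches.

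There are, however, two genuine gaps in the remaining case analysis. First, you do not rule out $\dim\pi(S_i)=1$. The paper does this with a short but essential argument: $-D$ is $\pi$-ample, so if $\pi(S_i)$ were a curve then $L|_{S_i}$ would be a positive multiple of a fiber of $\pi|_{S_i}$ and hence $-LD\cdot S_i>0$, contradicting $LD\cdot S_i=0$. Without this, your ``horizontal to a curve in $\oa[2]$'' case is ill-posed (and note $\beta_1\not\cong\ox[2]$; there is only a finite map $f\colon\ox[2]\to\partial\oa$, see the discussion at the start of \Cref{sec:ox1ox1}).

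Second, once one knows $\pi(S_i)$ is a point, the substantive remaining case is $\pi(S_i)=[B]\in\calA_1$, i.e.~$S_i\subset\pi^{-1}([B])$ with $[B]$ smooth. The paper does \emph{not} use \Cref{thm:sb} or the variety $V$ here. Instead it exploits the $\CC^*$-action on the threefold $\pi^{-1}([B])$ (the total space of the compactified Poincar\'e bundle over $B\times B$, modulo a finite group) and Borel's fixed point theorem, reducing to two subcases: either $S_i$ lies in the $0$-section, which is $\osi{K3}$, where one uses that $H_4(\osi{K3})$ is spanned by $S_D$ and $\osi{K3+1}$ together with $LM\cdot S_D>0$ to force $S_i$ proportional to $\osi{K3+1}$; or $S_i$ is a $\PP^1$-bundle over an effective curve in $\osi{K3}$, where one determines $\Nef[1](\osi{K3})=\RR_{\ge 0}L+\RR_{\ge 0}M$ and hence $\oE[1](\osi{K3})$, and then degenerates $B$ to the cusp to land back in~$\beta_3$. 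Your proposal gestures at \Cref{sec:osiK3}, which is relevant, but the mechanism is the $\CC^*$-action and the nef cone of curves on~$\osi{K3}$, not the exceptional locus of $M$.
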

\begin{proof}
The argument turns out to be completely analogous for both surfaces, as it uses the fact that their intersection numbers with $L^2$ and $LM$ are equal to zero. We give the argument for the case of~$\osi{C4}$.
Suppose for contradiction that one can write $\osi{C4}=\sum a_iS_i$ for some effective surfaces $S_i$ and coefficients $a_i>0$. Since $L^2$ and $LM$ are nef classes, their intersection with any effective surface is non-negative. Since $L^2.\osi{C4}=LM.\osi{C4}=0$, it thus
follows that $L^2.S_i=LM.S_i=0$ for all $i$. From the intersection numbers in \Cref{prop:intersections} it thus follows that for any $i$ the class of the surface $S_i$ lies in the linear span of $\osi{C4}$ and $\osi{K3+1}$, i.e.~$S_i=a\osi{C4}+b\osi{K3+1}$. We thus need to prove that $a$ and $b$ are both non-negative,

Since $L$ is an ample class on the Satake compactification, it follows that the morphism $\pi:\oa\to\Sat$ must contract $S_i$ to at most a curve.
Recalling that $M=12L-D$, and that $-D$ is $\pi$-ample, it follows that if $\dim\pi(S_i)=1$, so that $L.\pi(S_i)>0$, then also $-LD.S_i>0$; since $L^2.S_i=0$, it would then also follow
that $LM.S_i>0$, which contradicts the previously observed equality $LM.S_i=0$. Thus~$\pi(S_i)$ must be zero-dimensional, i.e.~$S_i$ must be contained in a fiber of $\pi$. If $S_i$ is not contained in $\beta_2$, then it must be the entire fiber of the map $\pi$ over some point of $\ab[2]$. Such a fiber has zero intersection with classes $L^2$ and $LM$, as computed above, and
also zero intersection with the class $\beta_2$, since it is simply disjoint from that locus.
Thus from the intersection table in \Cref{prop:intersections} it follows that the class of such a fiber $S_1$ is a positive multiple of $4\osi{C4}+\osi{K3+1}$, in which case both coefficients~$a$ and~$b$ are indeed positive.

Thus the only remaining possibility is for $S_i$ to be contained in $\beta_2$, and by taking further irreducible components we can additionally assume that~$\pi(S_i)$ is a point $[B]\in\oa[1]\subset\Sat$. If $[B]=[i\infty]=\partial\oa[1]$, then the
surface $S_1$ is contained in $\beta_3=\osi{1+1+1}$, and then by \Cref{lm:E111} its class is a positive linear combination of $\osi{K3+1}$ and $\osi{C4}$.

If $[B]\in\calA_1$, then $S_1\in\oE(\pi^{-1}([B])$, and thus we want to determine the generators of $\oE(\pi^{-1}([B])$.
For this we recall the geometry of this fiber, see \cite[Section 3]{ergrhu2}: up to a finite group action the normalization of this fiber is the total space of the $\PP^1$-bundle $\PP(\calO\oplus P)$ over $B\times B$. The map
to $\oa$ identifies the $0$-section and the $\infty$-section. Note that there is a $\CC^*$-action on this $\PP^1$-bundle, which fixes the $0$- and the $\infty$-sections of the bundle pointwise, and acts by multiplication on the $\CC^*$-fibers.
This action descends to $\pi^{-1}([B])$.
Similarly to the proof of \Cref{lm:E111}, one can, by the Borel fixed point theorem, choose as generators of the cone $\oE(\pi^{-1}([B])$ irreducible surfaces that are setwise fixed by the $\CC^*$-action.

Since the orbits of $\CC^*$ are either points in the $0$-section (identified with the $\infty$-section), or entire fibers of the bundle, it follows that generators of $\oE(\pi^{-1}([B])$ can be chosen to be surfaces that
either are contained in the~$0$-section, or as surfaces that contain fibers of the $\PP^1$-bundle.

If $S_i$ is contained in the $0$-section of this bundle, we first recall that this~$0$-section, as a subvariety of~$\oa$, is identified with $\osi{K3}$. Since the surfaces~$S_D$ and~$\osi{K3+1}$ are contained in~$\osi{K3}$ and their homology
classes are not proportional by \Cref{prop:tildeY}, it follows that their fundamental classes generate~$H_4(\osi{K3})$, which is two-dimensional by \Cref{lm:cohY}.
Thus we can write (the class of) $S_i$ as a linear combination $S_i=aS_D+b\osi{K3+1}$ with some coefficients $a$ and $b$. Since $LM.S_i=LM.\osi{K3+1}=0$ while $LM.S_D>0$, it follows that $a=0$. But then $S_i$ is proportional to $\osi{K3+1}$, and since these are both effective surfaces, the coefficient of proportionality must be positive.

Now it remains to classify irreducible effective surfaces in $\pi^{-1}([B])$ that contain entire $\PP^1$-fibers. The intersection of any such surface with the $0$-section is an effective curve $C$ in $B\times B$. Of course it would now suffice to compute the cone of effective surfaces on $B \times B$ for any elliptic curve $[B]\in\calA_1$, taking into account the further invariance under the finite automorphism of the group. However, instead we will consider $C$ as an effective curve $C\in\Eff[1](\osi{K3})$. To determine the cone of pseudoeffective curves $\oE[1](\osi{K3})$, we will determine the dual cone $\Nef[1](\osi{K3})$, or rather its pullback to $\wub$.
Before we discuss this in detail, we recall that $\si{K3}$ is a finite quotient of $\ua[1] \times_{\ab[1]} \ua[1]$ and that $\osi{K3}$ is a finite quotient of $\wub$,  see \Cref{lm:osiK3geometry}.
Moreover $\si{1+1}$ is a finite quotient of the $\CC^*$-bundle over $\si{K3}$ which is given by the Poincar\'e bundle with the $0$-section removed. Further recall  from \Cref{lm:cohY} that $\dim H^2(\osi{K3})=2$.
The pullbacks of the classes $L$ and $D$ to $\osi{K3}$ are clearly independent, and therefore the same holds for the pullbacks of $L$ and $M$.
Since the classes $L$ and $M$ are nef on $\oa$, their restrictions to $\osi{K3}$ and pullbacks to $\wub$ are also nef. We claim that neither $L$ nor $M$ is ample on~$\osi{K3}$.

Indeed, consider any effective curve $C_{\alpha}\subset\si{K3}\subset\osi{K3}$ which is contained in a fiber of the natural projection $\ua[1] \times_{\ab[1]} \ua[1] \to \ab[1]$. Then $L.C_{\alpha}=0$ and thus~$L$ is not ample on~$\osi{K3}$.
Similarly, the curve $C_{\beta}=\oa[1]\times [i\infty]$ contained in the $0$-section of $\wub$ (where we as always think of the point $[i\infty]$ as the boundary $[i\infty]=\partial\oa[1]$) satisfies $M.C_{\beta}=0$, and thus $M$ is not ample on $\osi{K3}$, either.
Thus we have found two nef non-ample divisor classes in $H^2(\osi{K3})=\RR^{\oplus 2}$, proving that $\Nef[1](\osi{K3})=\RR_{\ge 0}L+\RR_{\ge 0}M$.

Dually, it follows that $\oE[1](\osi{K3})=\RR_{\ge 0} C_{\alpha}+\RR_{\ge 0} C_{\beta}$, since these are two effective curve classes that have zero pairing with the two generators of the cone of nef divisors (equivalently, they are contracted under the corresponding morphisms). We now consider the pre-images of the (open) curves~$C_\alpha$ and~$C_\beta$ in the $\CC^*$-bundle which is the Poincar\'e bundle with the $0$-section removed and take the closure of their images in $\osi{1+1}$. Call these surfaces~$S_{\alpha}$ and~$S_{\beta}$ respectively. It now follows that the cone of effective surfaces in~$\osi{1+1}$ that contain the entire closures of images of the~$\CC^*$-fibers is generated by the classes of the surfaces~$S_\alpha$ and~$S_\beta$. Since our original surface~$S_i$ had to be contracted to a point in~$\Sat$, the restriction~$L|_{S_i}$ is the trivial bundle. Writing the class as $S_i=aS_\alpha+bS_\beta$, this implies that $b=0$, and thus $S_i=aS_\alpha$, and furthermore $a>0$ since both $S_i$ and $S_\alpha$ are effective surfaces. To compute the class of $S_\alpha$, for $C_\alpha$ we could choose for example the curve $B\times \{0\}$, but then deforming $B\in\calA_1$ to $[i\infty]=\partial\oa[1]$ deforms $S_\alpha$ to an effective (and not necessarily irreducible) surface contained in $\beta_3=\osi{1+1+1}$, and thus finally \Cref{lm:E111} applies to show that the homology class of $S_\alpha$ is a positive linear combination of $\osi{C4}$ and $\osi{K3+1}$, again.
\end{proof}

To prove extremality of the other surfaces, we will use the following observation of Blankers:
\begin{lm}[Blankers {\cite[Cor.~2.8]{blankers}}, setting $Y=\Exc(f)$]\label{lm:blankers}
If $f:X\to W$ is a projective morphism, and the class of the subvariety $Z\subseteq \Exc(f)$ is extremal effective, and $Z$ is contracted by $f$, then the class of $Z$ is extremal effective on $X$.
\end{lm}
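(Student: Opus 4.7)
The strategy is to push forward a hypothetical decomposition of $[Z]$ via $f$, forcing each summand to be supported on $Y:=\op{Exc}(f)$, where the given extremality of $[Z]$ can then be applied.

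Suppose $[Z]=[S_1]+[S_2]$ in $\Eff[k](X)$ with $S_1,S_2$ effective $k$-cycles on $X$. Applying $f_*$ and using that $Z$ is contracted by $f$ gives
\begin{equation*}
f_*[S_1]+f_*[S_2]=-f_*[Z]=0\quad\text{in }H_{2k}(W,\RR).
\end{equation*}
For a proper morphism, the pushforward of an irreducible effective $k$-cycle is $0$ when its image drops in dimension and a positive multiple of its image otherwise; in particular $f_*[S_i]\in\Eff[k](W)$. Since $\Eff[k](W)$ lies in an open half-space (pair against an ample class on $W$), two effective classes can only sum to zero if both vanish. Hence $f_*[S_i]=0$ for $i=1,2$, which means every irreducible component of $S_i$ is contracted by $f$, and thus lies in $Y$.

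Consequently $S_1,S_2$ are supported on $Y$ and define classes $[S_i]_Y\in\Eff[k](Y)$ with $\iota_*[S_i]_Y=[S_i]$, where $\iota\colon Y\hookrightarrow X$ denotes the inclusion. Writing $S_i=\sum_j a_{ij}T_{ij}$ as a positive combination of irreducible subvarieties $T_{ij}\subset Y$, one invokes the hypothesized extremality of $[Z]_Y$ in $\Eff[k](Y)$ componentwise to deduce that each $[T_{ij}]_Y$ lies in $\RR_{\ge 0}[Z]_Y$. Applying $\iota_*$ then yields $[S_i]\in\RR_{\ge 0}[Z]$, establishing extremality in $X$.

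The main subtlety lies in the last step: the pushforward argument only produces the equality $\iota_*([Z]_Y-[S_1]_Y-[S_2]_Y)=0$ in $H_{2k}(X,\RR)$, not necessarily the corresponding equality inside $H_{2k}(Y,\RR)$, because $\iota_*$ can fail to be injective. Following~\cite[Cor.~2.8]{blankers}, one circumvents this by working with actual irreducible cycle representatives on $Y$ rather than with homology classes alone, so that extremality of $[Z]_Y$ can be applied to each $T_{ij}$ directly. In the applications envisaged in the next section --- with $X=\oa$, $f=\pi\colon\oa\to\Sat[3]$, $Y=\partial\oa$, and $Z$ one of $S_A,S_F,S_D$ --- the transfer is unproblematic thanks to the explicit description of the homology of $\partial\oa$ and its strata developed in the preceding sections.
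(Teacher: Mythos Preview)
The paper does not give its own proof of this lemma --- it is simply cited from Blankers --- so there is nothing to compare against directly. Your overall strategy (push forward a decomposition by $f_*$, use that two effective classes summing to zero must both vanish, conclude that each $S_i$ is supported on $Y=\op{Exc}(f)$, then invoke extremality on $Y$) is the standard one and is essentially what Blankers does. One trivial slip: the displayed equation should read $f_*[S_1]+f_*[S_2]=f_*[Z]=0$, not $-f_*[Z]$.

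There is, however, a genuine error in your final paragraph. You correctly flag the subtlety that $\iota_*\colon H_{2k}(Y)\to H_{2k}(X)$ need not be injective, so from $[Z]=[S_1]+[S_2]$ in $H_{2k}(X)$ one cannot immediately deduce $[Z]_Y=[S_1]_Y+[S_2]_Y$ in $H_{2k}(Y)$. But your proposed remedy --- ``invoke the hypothesized extremality of $[Z]_Y$ componentwise to deduce that each $[T_{ij}]_Y$ lies in $\RR_{\ge 0}[Z]_Y$'' --- is not a valid use of extremality. Extremality of $[Z]_Y$ says only that \emph{if} $[Z]_Y$ equals a sum of two effective classes in $H_{2k}(Y)$ then each summand is a multiple of $[Z]_Y$; it says nothing whatsoever about an arbitrary irreducible effective class $[T_{ij}]_Y$ being a multiple of $[Z]_Y$. (If it did, $[Z]_Y$ would generate all of $\Eff[k](Y)$.) So as written, the argument does not close.

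The honest fix is to observe that what one actually needs is extremality of $\iota_*[Z]_Y$ in the image cone $\iota_*\bigl(\Eff[k](Y)\bigr)\subset H_{2k}(X)$, which is exactly the hypothesis one has once the $S_i$ are known to be supported on $Y$. In every application made in this paper (e.g.\ $Y=\partial\oa$ or $Y=N(\oa[1]\times\oa[2])$) this is verified by hand via the explicit description of the relevant homology, so no gap arises there; but your general argument should be reformulated accordingly rather than appealing to a ``componentwise'' use of extremality that does not exist.
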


To prove extremality of the surfaces $S_A$ and $S_F$, we will apply this result for the morphism given by a sufficiently high multiple $|nM|$ of the linear system $M$, where Shepherd-Barron's result ( \Cref{thm:sb} above) shows that the exceptional locus is $\oa[1]\times\oa[2]$, and we have investigated its geometry in \Cref{sec:oa1oa2}.
\begin{prop}\label{prop:SLS0extremal}
The surfaces $S_A$ and $S_F$ are extremal effective on $\oa$.
\end{prop}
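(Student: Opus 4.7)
The plan is to apply Blankers' \Cref{lm:blankers} to the morphism $\phi\colon\oa\to\phi(\oa)$ defined by a sufficiently large multiple $|nM|$. By Shepherd-Barron's \Cref{thm:sb}, the exceptional locus of $\phi$ is exactly $N(\oa[1]\times\oa[2])$, which contains both $S_A$ and $S_F$ by their very construction.

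First I would verify that $\phi$ contracts $S_A$ and $S_F$. From the pullback formula $N^*M=M_2$ in~\eqref{eq:restrict}, the restriction $\phi|_{N(\oa[1]\times\oa[2])}$ factors through the second projection $\oa[1]\times\oa[2]\to\oa[2]$ composed with the map defined by $|nM_2|$ on $\oa[2]$. On $\oa[2]$, the curve $C_A$ is dual to $L_2$ in Shepherd-Barron's description of $\Nef[1](\oa[2])$, and therefore satisfies $M_2\cdot C_A=0$; hence $S_A=N(\oa[1]\times C_A)$ is contracted to a point. The curve $C_F$ satisfies $M_2\cdot C_F=1/2\ne 0$, but then $S_F=N(\oa[1]\times C_F)$ still drops from dimension two to dimension one under $\phi$.

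Next I would identify $\oE(N(\oa[1]\times\oa[2]))$. Since $N$ is a finite morphism (the normalization of the decomposable locus), every irreducible effective surface in $N(\oa[1]\times\oa[2])$ is the image under $N$ of an irreducible effective surface on $\oa[1]\times\oa[2]$, so pushforward identifies $\oE(N(\oa[1]\times\oa[2]))$ with $N_*\bigl(\oE(\oa[1]\times\oa[2])\bigr)$. By \Cref{prop:effoa1oa2} this latter cone is generated by $S_{AA}, S_{AF}, S_{DA}, S_{DD}$, whose $N$-images in $\oa$ are respectively $S_A, S_F, S_A, S_D$. Hence
$$
  \oE(N(\oa[1]\times\oa[2]))=\RR_{\ge 0}[S_A]+\RR_{\ge 0}[S_F]+\RR_{\ge 0}[S_D]\,.
$$
By \Cref{prop:intersections}, the three classes $[S_A], [S_F], [S_D]$ are linearly independent already in $H_4(\oa)$, hence linearly independent in $H_4(N(\oa[1]\times\oa[2]))$. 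The effective cone on the exceptional locus is thus a simplicial three-dimensional cone with $S_A, S_F, S_D$ as its three extremal rays. Applying \Cref{lm:blankers} to $\phi$ then yields that $S_A$ and $S_F$ are extremal effective on $\oa$.

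I expect the main obstacle to lie in the step identifying $\oE(N(\oa[1]\times\oa[2]))$: the morphism $N$ is not injective on the totally decomposable locus, and the surfaces $S_{AA}$ and $S_{DA}$ both map onto $S_A$ (consistent with the relation $[S_{AA}]=2[S_{DA}]$ from \Cref{remSAAstackyfactor}), so one has to argue carefully that pushforward under $N$ surjects onto the effective cone on the image. Note that this argument does \emph{not} cover $S_D$: since $M_2$ is big on $\oa[2]$ and the restriction of $|nM_2|$ to $\partial\oa[2]$ is birational onto its image, the surface $S_D=N([B]\times\partial\oa[2])$ maps birationally under $\phi$ and is therefore not contracted, which is precisely why the auxiliary geometry of $V$ developed in \Cref{sec:ox1ox1} is needed to handle $S_D$ separately.
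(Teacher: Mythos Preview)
Your proof is correct and follows essentially the same route as the paper: apply Blankers' \Cref{lm:blankers} to the morphism given by $|nM|$, whose exceptional locus is $N(\oa[1]\times\oa[2])$ by \Cref{thm:sb}, verify that $S_A$ and $S_F$ are contracted, and then establish their extremality on the exceptional locus. The only organizational difference is that you compute the entire cone $\Eff\bigl(N(\oa[1]\times\oa[2])\bigr)=\RR_{\ge 0}S_A+\RR_{\ge 0}S_F+\RR_{\ge 0}S_D$ and read off extremality from its simplicial shape, whereas the paper argues for each surface separately by pulling back a hypothetical decomposition along $N$; these are the same argument. Your worry about the non-injectivity of $N$ is already handled by the observation you make: since $N$ is finite and surjective, every irreducible effective surface in the image is $N_*$ of one on the product, so the identification of effective cones goes through, and the collapse $N_*S_{AA}=2N_*S_{DA}$ just means one generator is redundant.
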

\begin{proof}
Let $N: \oa[1] \times \oa[2] \to \oa$ be the natural map.
We want to use \Cref{lm:blankers} applied to $Z=N(\oa[1] \times \oa[2])$. For this we first have to check that they the surfaces $S_A$ and $S_F$ are contracted under the map $|mM|$.
The preimage of $S_F$ is the surface $S_{AF}$, whereas  the preimage of $S_A$ consists of~$S_{AA}$ and~$S_{DA}$.
Since $N^*M=M_2$ on $\oa[1] \times \oa[2]$ is in particular trivial on the first factor, the linear system $|mN^*(M)|$ clearly contracts any surface of the form $\oa[1]\times C$, and thus the surfaces $S_{AF}$ and $S_{AA}$.
Similarly, the exceptional locus of $M_2$ on $\oa[2]$ is  $\oa[1]\times\oa[1]$ and this also shows that $S_{DA}$ is  contracted under $|mN^*(M)|$. Hence $S_A$ and $S_D$ are contracted under $|mM|$.

It remains to show that the surfaces $S_A$ and $S_F$ are extremal effective on~$Z$. Assume that this is not the case for $S_F$. But then its preimage~$S_{AF}$ on $\oa[1] \times \oa[2]$ would not be extremal effective, contradicting
\Cref{lm:intersectionsoa1oa2}. For~$S_A$ we find by the same Lemma that any decomposition of $S_A$ would consist of a combination of surfaces of the class of $S_{DA}$ and $S_{AA}$. Since these surfaces both map to
$S_A$ we can conclude that $S_A$ is also extremal effective.
\end{proof}
As we have observed before, while on $\oa[1]\times\oa[2]$ all three surfaces $S_{AA}$, $S_{AF}$  and $S_{DA}$ are of course distinct, the images $N(S_{AA})$ and $N(S_{DA})$ are the same surface $S_A\subset\oa$, as one considers the product abelian threefold, without marking the factors --- so we obtain only two extremal effective surfaces on~$\oa$.

Furthermore, it is tempting to prove that the image $S_D$ of the surface~$S_{DD}$, which we have also shown to be extremal on $\oa[1]\times\oa[2]$, is also extremal effective on $\oa$, but as it is not contracted by $|mM|$, the above argument does not apply.
To deal with $S_D$, we will thus apply Blankers' result \Cref{lm:blankers}   to
the morphism $\pi:\oa\to\Sat$ given by a sufficiently high multiple of the linear system~$|nL|$. Since~$L$ is ample on~$\Sat$, the exceptional locus $\Exc(\pi)$ is equal to $\partial\oa$. We will thus apply \Cref{lm:blankers}
to extremal effective surfaces in $\partial\oa$ --- if they are contracted in $\Sat$, they must be extremal in $\oa$. This is where we are going to use the special feature of the geometry of genus $3$, that there is a
finite map $f:\ox\to\partial\oa[2]$, as discussed in \Cref{sec:ox1ox1}.

\begin{prop}\label{prop:S0extremal}
The surface $S_D$ is an extremal effective ray of $\Eff(\oa)$.
\end{prop}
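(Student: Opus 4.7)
The plan is to apply Blankers' Lemma \Cref{lm:blankers} to the contraction $\pi:\oa\to\Sat$ in order to reduce the problem to showing $S_D$ is extremal effective on $\partial\oa$, and then to transfer this to the extremality of $S_3$ on $V$, which is already established in \Cref{prop:effV}.

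First, I would verify the hypotheses of Blankers' Lemma with $f=\pi$. The exceptional locus is $Exc(\pi)=\partial\oa$ because $L$ is ample on the Satake compactification, and $S_D\subset\partial\oa$ by construction. I would then observe that $S_D$ is contracted by $\pi$: geometrically $\pi(S_D)=[B]\times\Sat[1]$ is a curve in $\Sat$, since all singular semi-abelic surfaces map to the cusp of $\Sat[2]$ lying in $\Sat[1]$. Equivalently, $L^2.S_D=0$ by \Cref{prop:intersections}. Blankers' Lemma thus reduces the claim to showing $S_D$ is extremal effective in $\Eff(\partial\oa)$.

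For the main step, I would suppose $S_D=T_1+T_2$ with $T_i\in\Eff(\partial\oa)$ and pull back under the finite surjective map $f:\ox[2]\to\partial\oa$ from \Cref{sec:ox1ox1}. By \Cref{prop:intersectionsV} we have $f(S_3)=S_D$ with $S_3=12(L_1Z_1+L_2Z_2)$; since $V\subset\ox[2]$ is saturated under the deck transformations of $f$ (being the preimage of the decomposable locus, which the deck action preserves), the set-theoretic preimage $f^{-1}(S_D)$ lies in $V$, so $f^*S_D=\lambda[S_3]$ on $V$ for some $\lambda>0$. The identity $f^*S_D=f^*T_1+f^*T_2$, together with the fact that effective cycles cannot cancel, forces the support of each $f^*T_i$ to be contained in the support of $f^*S_D$, hence in $V$; thus $f^*T_1,f^*T_2\in\Eff(V)$. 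Of the four generators of $\oE(V)$ listed in \Cref{prop:effV}, only $S_3$ pushes forward to a multiple of $S_D$ under $f$ (the others map to $S_A$, to $S_F$, or to a combination of $\osi{C4}$ and $\osi{K3+1}$, by the analysis after \Cref{prop:intersectionsV}), so the extremality of $S_3$ in $\oE(V)$ will force $f^*T_i=\mu_i[S_3]$ with $\mu_i\ge 0$. Since $f_*f^*=\deg(f)\cdot\operatorname{id}$ on rational cohomology, the pullback $f^*$ is injective there, and this will give each $T_i$ as a non-negative multiple of $S_D$, proving extremality.

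The main obstacle I anticipate is the rigorous justification that $f^{-1}(S_D)\subseteq V$ and that $f^*S_D$ is proportional to $[S_3]$ as a class on $V$; both should follow from the explicit description of $f$ and its deck group together with the fact that $V$ is cut out by a deck-invariant condition, but they demand careful bookkeeping of stacky factors in the spirit of the discussion after \Cref{rem:stacky12}. A second, subsidiary check is that among the generators of $\oE(V)$ only $S_3$ pushes forward to a cycle proportional to $S_D$ on $\partial\oa$; this is straightforward from the intersection numbers in \Cref{prop:intersectionsV} and \Cref{prop:intersections}, since the classes $S_A$, $S_F$, $\osi{C4}$ and $\osi{K3+1}$ are pairwise linearly independent.
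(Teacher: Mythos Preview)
Your reduction via Blankers' Lemma to extremality on $\partial\oa$, and then via the finite map $f:\ox[2]\to\partial\oa$ to a statement on $\ox[2]$, matches the paper exactly. The gap is in the main step. You write that ``the identity $f^*S_D=f^*T_1+f^*T_2$, together with the fact that effective cycles cannot cancel, forces the support of each $f^*T_i$ to be contained in the support of $f^*S_D$, hence in $V$.'' But the equality $S_D=T_1+T_2$ is an equality of \emph{classes} in $\Eff(\partial\oa)$, not of cycles; nothing prevents $T_i$ from being an effective surface whose support is disjoint from $S_D$, and hence $f^{-1}(T_i)$ need not lie in $V$. (Compare: on $\PP^2$, a line class is the sum of two other line classes with entirely different supports.) Your anticipated obstacle is thus not a bookkeeping issue but a genuine missing argument.

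The paper fills this gap with a different idea: it produces three nef codimension-two classes on $\ox[2]$, namely $(p^*M_2)^2$, $(p^*M_2)T$, and $T^2$ (where $p:\ox[2]\to\oa[2]$ is the universal-family map and $T$ the theta divisor trivialized along the zero section, which is $p$-ample), and checks that all three intersect $S_3$ trivially. Hence any effective $S_i$ appearing in a decomposition of $S_3$ on $\ox[2]$ has zero intersection with all three. A case analysis on $\dim p(S_i)$ then uses Shepherd-Barron's \Cref{thm:sb} (the exceptional locus of $M_2$ on $\oa[2]$ is the decomposable locus) together with $p$-ampleness of $T$ to force $S_i\subset V$; only then does extremality of $S_3$ in $\oE(V)$ finish the argument. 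This ``nef-class squeeze'' is the key input you are missing.
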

\begin{proof}
We apply \Cref{lm:blankers} to the contraction $\pi:\oa\to\Sat$, whose exceptional locus is $\partial\oa$. Since $\pi(S_D)$ is a curve, isomorphic to $\oa[1]$, in $\Sat$, it suffices to prove that $S_D$ is an extremal effective rays of $\Eff(\oa)$ (while we note that this argument will work for $S_F$, but cannot be applied to $S_A$ that is mapped isomorphically to its image $\oa[1]\times\oa[1]\subset\Sat$ under $\pi$).

Recall now that there is a finite morphism $f:\ox\to\partial\oa$ and that $S_D$ parameterizes semi-abelic threefolds which are the product of a fixed elliptic curve with a singular semi-abelic surface.
Suppose for contradiction that~$S_D$ is not an extremal effective ray of $\Eff(\partial\oa)$ -- but then its preimage, which is an irreducible surface on $\ox$, would not be an extremal effective
ray of $\Eff(\ox)$ either.
Thus finally it suffices to prove that the preimage $f^*S_D$ on~$\ox$ is an extremal effective ray of $\Eff(\ox)$. This preimage is contained in $V=\ox[1]\times\ox[1]/S_2$, and we think of it as $S_3\subset V\subset\ox$.

The advantage of thinking of $S_3\subset\ox$ as opposed to $S_D\subset\partial\oa$ is that we can use the morphism $p:\ox[2]\to\oa[2]$, and not only $\pi|_{\partial \oa}\circ f:\ox\to\Sat[2]$.
In particular the divisor class $M_2$ is nef on~$\oa[2]$, and thus $p^* M_2$ is nef on $\ox[2]$. Since on the decomposable locus $\oa[1]\times\oa[1]\subset\oa[2]$ the class $M_2$ restricts to the direct sum of the pullback of $M_1$ from each factor, and $M_1=12L-D$ on $\oa[1]$ is trivial, it follows that the pullback $p^*M_2|_V$ is also trivial. In particular $p^*M_2|_{S_3}$ and $p^*M_2|_{S_4}$ are both trivial bundles (crucially, we note that $M_3|_{\partial\oa}\ne M_2$, which is what makes this argument work, while $M_3$ is non-zero on $S_D$, as can be seen from the intersection numbers $S_D.LM$ and $S_D.M^2$ being non-zero in \Cref{prop:intersections}).

Suppose that $S_3=\sum a_i S_i$, with $a_i>0$ and $S_i\subset \ox$ some irreducible effective surfaces. Let $T$ be the class of the theta divisor on $\ox$, trivialized along the $0$-section, so that $T$ is a nef class on $\ox$, which is furthermore $p$-ample. Since $p^*M_2|_V$ is trivial, it follows that $p^*M_2|_{S_3}$ is the trivial bundle, and thus the intersection numbers $S_3.(p^*M_2)^2=S_3.(p^*M_2)T=0$ both vanish. Furthermore  $T|_V=T_1+T_2$ and thus $T^2|_V=2T_1T_2$. Recalling the intersection numbers of $S_3=L_1Z_1+L_2Z_2$ on $V$, it follows that $S_3.T^2=0$. Since $p^*M_2$ and $T$ are nef divisors on $\ox$, the three classes $(p^*M_2)^2,p^*M_2T$, and $T^2$ are nef, and since they have zero intersection with $S_3$, it follows that they also have zero intersection with each of the effective surfaces $S_i$:
$$
 S_i.(p^*M_2)^2=S_i.p^*M_2T=S_i.T^2=0.
$$

If for some $i$ the image $p(S_i)$ is a surface, then $0=S_i.(p^*M_2)^2=p(S_i).M_2^2$ implies that $p(S_i)$ must be contained in the exceptional locus $\Exc(M_2)\subset\oa[2]$, which by the result of Shepherd-Barron, \Cref{thm:sb}, is equal to
$N(\oa[1]\times\oa[1])$. Thus $p(S_i)\subset N(\oa[1]\times\oa[1])$ and $S_i\subset V$.

If for some $i$ the image $p(S_i)$ is a curve $C\subset\oa[2]$, then since $T$ is $p$-ample, the equality $S_i.(p^*M_2)T=0$ implies $C.M_2=0$, and thus again $C$ must be contained in  $\Exc(M_2)=N(\oa[1]\times\oa[1])$,
and thus again $S_i\subset V$.

Finally, if $p(S_i)$ were a point in $\oa[2]$, then since the class $T$ is $p$-ample, it would follow that $S_i.T^2>0$, which is a contradiction.

Thus if $S_3=\sum a_i S_i$, for $S_i\subset \ox[2]$, it follows that each $S_i$ is contained in~$V$. Since by \Cref{prop:effV} the surface~$S_3$ is extremal effective on~$V$, it thus follows that~$S_3$ is extremal effective on $\partial\oa[3]$.
\end{proof}
\begin{rem}
The proof above applies verbatim to the surface $S_F$ as well. Indeed, the only statements used about $S_D$ were that $S_D$ were contained in~$V$ (which implies $M_2|_{S_D}=\calO_{S_D}$), and that $S_D.T^2=0$. Both of these hold for $S_F$, and thus we get an alternative proof of the extremality of $S_F$ in this way. Note that for the surface $S_1\subset V\subset\ox$ we also have $M_2|_{S_1}$ trivial, and the only part of the above argument that fails is the statement that $S_1.T^2=0$. Indeed, we know that $S_1$ is not an extremal effective ray on $\oa$, as it is a positive linear combination of $\osi{C4}$ and $\osi{K3+1}$, as discussed above.
\end{rem}

\section{A surface $S_P\subset\osi{1+1}$ homological to $S_F$}\label{sec:SF}
While we have constructed our five surfaces in an easy way as images of surfaces contained in $\oa[1]\times\oa[2]$ and in~$V$, it is very natural to look for further effective surfaces contained deeper in the boundary of~$\oa$.
In \Cref{sec:osiK3} we studied in detail the geometry of the three-dimensional toroidal stratum $\osi{K3}$.
\Cref{lm:E111} determines the effective cone $\Eff(\osi{1+1+1})$ of the other three-dimensional
toroidal stratum $\osi{1+1+1}$, and we see that no further extremal effective surfaces can be constructed there. We now look at the unique four-dimensional toroidal stratum $\osi{1+1}$. While it seems possible to determine $\Eff(\osi{1+1})$
with our methods with more work, here we only define one natural geometric surface contained in $\osi{1+1}$.  We cannot see immediately why this surface should  be homologous to one of our five extremal effective surfaces.
However, we compute its homology class, which turns out to be equal to the class of $S_F$.

Roughly speaking,  we define $S_P$ as the image in $\osi{1+1}$ of the closure of the total space of the $\PP^1$-bundle $\PP(\mathcal O \oplus P)$  over the $0$-section $\calA_1\to\calX_1\times_{\calA_1}\calX_1$ where $P$ denotes the Poincar\'e bundle.
To make this more precise we start with the cone $\langle x_1^2,x_2^2 \rangle$. The corresponding toroidal variety $\calT(\si{1+1})$ is a $\CC^*$-bundle over  $\ua[1] \times_{\ab[1]} \ua[1]$. Geometrically this is the total space of the Poincar\'e bundle $P$ with the $0$-section removed. We restrict this to the $0$-section $0: \ab[1] \to \ua[1] \times_{\ab[1]} \ua[1]$ and denote this restriction by~$T_P^0$. Since the Poincar\'e bundle is trivial over the $0$-section we thus obtain the trivial $\CC^*$-bundle over~$\ab[1]$.  We define $S_P^0$ to be the image of this trivial $\CC^*$-bundle under the map $\calT(\si{1+1}) \to \si{1+1} \subset \oa$ that maps onto the open stratum of~$\beta_2$.
This map is given by the quotient under the group $G(\si{1+1}) \subset \Sp(4,\ZZ)$ that is generated by the involutions
\begin{equation*}
\begin{array}{lll}
(x_1,x_2) &\mapsto& (\pm x_1, \pm x_2) \\
(x_1,x_2) &\mapsto& (x_2,x_1)\,.
\end{array}
\end{equation*}
We note here that in the end we also have to take care of the extra factor $1/2$ which comes from the discussion in \Cref{rem:stacky12}. The first two involutions define the Kummer involutions on the fibers of $\ua[1] \times_{\ab[1]} \ua[1]$, the second involution interchanges the two factors. However, we also note that the involutions $(x_1,x_2) \mapsto (- x_1, x_2)$
and  $(x_1,x_2) \mapsto (x_1, - x_2)$ both act as the involution $z\mapsto 1/z$ on the fibers of $T_P^0$. Hence the map $T_P^0 \to S_P^0$ is a $2:1$ map that branches over the sections $\ab[1] \times \{ \pm 1\}$.
Since the quotient of~$\CC^*$ by the involution  $z\mapsto 1/z$ is isomorphic to $\CC$, it follows that the quotient $S_P^0$ is isomorphic to the trivial $\CC$-bundle over $\ab[1]$. The surface $S_P$ is  defined as the closure of $S_P^0$ in $\beta_2$ or, equivalently, in $\oa$. We note however, that $S_P$ is not a normal surface. Indeed, the closure of any $\CC^*$ over a point in $\ab[1]$ is a nodal cubic.

We set $T_P:= \PP^1 \times \PP^1$ and think of this as a compactification of $T_p^0=\ab[1] \times \CC^*$.
\begin{lm}
The surface $S_P$ is isomorphic to $\oa[1] \times \PP^1$ and the map $T_P^0 \to S_P^0$ extends to a $2:1$ map $T_P \to S_P$.
\end{lm}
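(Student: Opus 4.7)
The plan is to identify $S_P$ with a quotient $T_P/\iota$, where $\iota$ is the extension of the involution $z\mapsto 1/z$ on the $\CC^*$-factor, and then to identify this quotient with $\oa[1]\times\PP^1$.

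First I would extend the rational map $T_P\dashrightarrow \oa$ (defined on the open subset $T_P^0$) to a regular morphism $T_P\to\oa$ with image $S_P$. Since $T_P=\PP^1\times\PP^1$ is a smooth projective surface and $\oa$ is proper, any indeterminacy locus of the rational map has codimension at least two in $T_P$, i.e.~is a finite set of points. To rule out indeterminacy at these points, I would realize $T_P$ as the closure of $T_P^0$ inside the $\PP^1$-bundle over the $0$-section $\ab[1]\hookrightarrow \ua[1]\times_{\ab[1]}\ua[1]$ that compactifies the $\CC^*$-bundle $\calT(\si{1+1})$ within the toroidal variety: this $\PP^1$-bundle carries a natural morphism into the closure of $\si{1+1}$ in $\oa$ (coming from the toroidal quotient by $G(\si{1+1})$), and the restriction to $T_P$ is our desired extension.

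Next I would identify $T_P/\iota$ with $\oa[1]\times\PP^1$. The involution $(b,z)\mapsto(b,1/z)$ extends to all of $\oa[1]\times\PP^1$, acting trivially on $\oa[1]$, swapping $0\leftrightarrow\infty$ and fixing $\pm 1$ on the $\PP^1$-factor. Since the quotient of $\PP^1$ by $z\mapsto 1/z$ is again $\PP^1$, realized explicitly by the invariant $w=z+z^{-1}$ (branched over $\pm 1$, identifying $0$ with $\infty$), we obtain $T_P/\iota\cong\oa[1]\times\PP^1$. Finally, I would show the extended morphism $T_P\to S_P$ factors through $T_P/\iota$ as an isomorphism. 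Factoring is automatic: on the dense open $T_P^0$, the map is by construction the $\iota$-quotient, so $\iota$-invariance propagates to all of $T_P$ by continuity. The resulting finite birational map $T_P/\iota\to S_P$ is an isomorphism provided there are no further identifications beyond $\iota$; this reduces to verifying that among the generators of $G(\si{1+1})$ the involutions $(x_1,x_2)\mapsto(\pm x_1,\mp x_2)$ act on $T_P^0$ exactly as $\iota$, the involution $(x_1,x_2)\mapsto(-x_1,-x_2)$ coincides with the Kummer involution already quotiented out in $\ox[1]$, and $(x_1,x_2)\mapsto(x_2,x_1)$ acts trivially on the $0$-section.

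The main obstacle will be the boundary analysis in the first and last steps: one must check that the two boundary sections $\oa[1]\times\{0\}$ and $\oa[1]\times\{\infty\}$ of $T_P$ become identified in $S_P$ (and not distinct components of the ambient boundary), and analogously for the fiber $\{i\infty\}\times\PP^1$ over the cusp, where additional identifications coming from $\GL(3,\ZZ)$-equivalences of the higher-dimensional cones containing $\si{1+1}$ could a priori arise. This requires a careful comparison of the local fan describing the compactification $T_P\supset T_P^0$ with the fan describing the toroidal neighborhood of $\si{1+1}\subset\oa$, along the same lines as the toric analysis carried out in the proof of \Cref{lm:osiK3geometry}.
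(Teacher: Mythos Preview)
Your approach is sound and reaches the same conclusion, but it proceeds differently from the paper. The paper works directly with $S_P^0\cong\ab[1]\times\CC$ and compactifies it inside $\oa$ by tracing through the cone inclusions $\si{1+1}\subset\si{K3}$ (adding the $\infty$-section of each $\CC$-fiber) and $\si{1+1}\subset\si{1+1+1},\si{K3+1}$ (extending over the cusp); a quick toric check gives smoothness, so $S_P$ is a Hirzebruch surface over $\oa[1]$. The identification with $\oa[1]\times\PP^1$ is then obtained not by computing a quotient, but by the slick observation that the $\infty$-section together with the two branch sections $\ab[1]\times\{\pm 1\}$ extend to three pairwise disjoint sections, which forces the Hirzebruch number to be zero. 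The extension of the $2:1$ map $T_P\to S_P$ is then declared obvious. By contrast, you work from the $T_P$ side: you extend $T_P\to\oa$, compute $T_P/\iota$ explicitly via $w=z+z^{-1}$, and argue that no further identifications occur. Your route is perfectly legitimate, but as you yourself note, the boundary verification (that the compactification of $T_P^0$ inside the ambient $\PP^1$-bundle really maps without further collapsing onto the closure $S_P$) requires exactly the same fan-by-fan inspection the paper carries out directly on $S_P^0$; the paper's ``three disjoint sections'' trick then bypasses your explicit quotient computation entirely. In short, the paper's argument is shorter because it never needs to establish that $T_P\to S_P$ is a global quotient --- it just reads off the structure of $S_P$ from the toroidal data and infers the covering afterward.
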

\begin{proof}
The compactification of $S_P^0$ is done in two steps. Firstly, we compactify the fibers $\CC$ of the product $\ab[1] \times \CC$  to $\PP^1$. This is given by the inclusion of the cones $\si{1+1} \subset \si{K3}$.  Secondly, we have to extend the surface
over the cusp of $\ab[1]$. A straightforward calculation in toric geometry shows that we  add a copy of $\CC$ coming from the inclusion $\si{1+1} \subset \si{1+1+1}$ and a point given by the inclusion  $\si{1+1} \subset \si{K3+1}$ and that the resulting surface is smooth.
Hence we obtain a $\PP^1$-bundle over $\oa[1]$, which must be a Hirzebruch surface. We claim this is $\PP^1 \times \PP^1$. This can be seen e.g.~since the section at $\infty$ as well as the sections $\ab[1] \times \{ \pm 1\}$
extend to three disjoint sections on the $\PP^1$-bundle. This can only happen for $\PP^1 \times \PP^1$. The rest of the claim is then obvious.
\end{proof}

\begin{rem}
We could also have worked with  the surface $S_P$ directly, but our computation of intersection numbers is more consistent with our other computations if we do this on $T_P$.
\end{rem}

\begin{rem}
Another approach to prove that $S_P$ is the trivial $\PP^1$-bundle is to work with a level-$n$ cover. Then we obtain a $\CC^*$-bundle in $\oa$ which is compactified to a $\PP^1$-bundle. In this case there is a global involution given by $(x_1,x_2) \mapsto (x_1,-x_2)$, which interchanges the $0$-section and the $\infty$-section, thus showing that they have the same self-intersection.
\end{rem}

We can now compute the intersection numbers of $S_P$ with a basis of $H^4(\oa)$.

\begin{prop}\label{prop:interectionumbersSP}
The surface $S_P$ has the following intersection numbers:
\begin{equation*}
L^2.S_P=0, \, LM.S_P= \frac{1}{96}, \, M^2.S_P=0, \,  \beta_2.S_P= - \frac{1}{8}\,,
\end{equation*}
\begin{equation*}
  LD.S_P= - \frac{1}{96}, \, D^2.S_P= - \frac{1}{4}\,.
\end{equation*}
\end{prop}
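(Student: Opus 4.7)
The plan is to compute every entry by pulling back to $T_P = \PP^1 \times \PP^1$ via the $2{:}1$ map from the preceding lemma and then dividing by $2$, together with the overall stacky factor from \Cref{rem:stacky12}. Write $F_a, F_b$ for the two ruling classes on $T_P$, oriented so that $F_b$ is a fiber of the first projection $T_P \to \oa[1]$ (the $\oa[1]$-factor) and $F_a$ is a fiber of the second projection (the Poincar\'e $\PP^1$-bundle direction). The cohomology ring of $T_P$ is generated by these two classes with $F_a^2 = F_b^2 = 0$ and $F_a F_b = 1$.

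The vanishing $L^2 . S_P = 0$ is immediate: $L$ is a pullback from $\Sat$, and the composition $T_P \to S_P \to \Sat$ factors through $\oa[1]$ via the first projection. Hence $L|_{T_P} = c\,F_b$ for some rational $c$, giving $L^2|_{T_P} = 0$. For the rest, I would work on a level-$2$ cover $\oa(2)$, where $S_P$ lies generically in the intersection of two irreducible boundary components $D_i, D_j$ corresponding to the two generators $x_1^2, x_2^2$ of $\si{1+1}$. Then
\[
D|_{T_P} = D_i|_{T_P} + D_j|_{T_P} + D_{i+j}|_{T_P} + \sum_{k \ne i,j,i+j} D_k|_{T_P}.
\]
The component $D_{i+j}$ corresponds to the stratum $\si{K3}$ above $\si{1+1}$ and cuts out the $0$- and $\infty$-sections of the Poincar\'e $\PP^1$-bundle, contributing a multiple of $F_b$. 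The sum over $k\ne i,j,i+j$ cuts out the fiber of $T_P \to \oa[1]$ over the cusp, contributing a multiple of $12L$, i.e.\ a multiple of $F_b$. For $D_i|_{T_P}$ and $D_j|_{T_P}$ I would use the analog of \Cref{lem:restrictionDitoZ}, namely $D_i = -2T_1 - P$ and $D_j = -2T_2 - P$, combined with the key observation that $T_1, T_2$ restrict trivially along the $0$-section of the Poincar\'e fibration (by the defining normalization of the trivialized theta), so that only the Poincar\'e class $P$ contributes nontrivially. This expresses $D|_{T_P}$ explicitly as $\alpha F_a + \beta F_b$, after which $LD.T_P = \tfrac{c\alpha}{1} \cdot F_aF_b$ and $D^2.T_P = 2\alpha\beta$ become routine.

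For $\beta_2 . S_P$ the plan is to expand $\beta_2 = \sum_{a<b} D_a D_b$ on the level cover and restrict each summand to $T_P$. The leading term $D_i|_{T_P} \cdot D_j|_{T_P} = (-2T_1 - P)(-2T_2 - P)$ contributes nontrivially via the $P^2$ term, which on the compactified $\PP^1$-bundle restricted to the $0$-section gives exactly the self-intersection class of a section of $\PP(\mathcal O \oplus P)$. The remaining cross-terms, together with products involving $D_{i+j}$ and the cuspidal boundary components, then complete the computation. Finally, $LM.S_P$ and $M^2.S_P$ follow algebraically from $M = 12L - D$: we have $LM.S_P = -LD.S_P$ and $M^2.S_P = -24\,LD.S_P + D^2.S_P$, and one checks the claimed values from the intersection table.

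The main obstacle will be tracking the stacky multiplicities correctly: the factor $1/2$ from the $2{:}1$ map $T_P \to S_P$, the factor $1/12$ from passing between stabilizers in $\Sp(4,\ZZ)$ and $\Sp(6,\ZZ)$ as in \Cref{rem:stacky12}, and ensuring that the boundary components meeting $T_P$ along the $0$/$\infty$-sections versus the cuspidal fiber are counted with the correct multiplicity. As a consistency check one can verify that $\beta_2.S_P$ and $L D.S_P$ agree with the entries obtained for $S_F$ in \Cref{prop:intersections}, thereby confirming the expected identity $[S_P] = [S_F]$ in $H_4(\oa, \RR)$.
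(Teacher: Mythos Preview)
Your overall strategy---pull back to a smooth model of $S_P$, identify the restrictions of the boundary components, and divide by the appropriate automorphism factor---is the same as the paper's. But three specific points would derail the computation.

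First, on a level cover the $0$-section and the $\infty$-section of the Poincar\'e $\PP^1$-bundle are cut out by \emph{two distinct} boundary components $D_{i+j}$ and $D_{i-j}$, corresponding to the cones $\langle x_1^2,x_2^2,(x_1-x_2)^2\rangle$ and $\langle x_1^2,x_2^2,(x_1+x_2)^2\rangle$ respectively. Your decomposition lists only $D_{i+j}$ and then lumps every other $D_k$ into the cuspidal-fiber term; this misses $D_{i-j}$, so both $D|_{T_P}$ and the expansion $\beta_2=\sum_{a<b}D_aD_b$ will be off, and $D^2.S_P$, $\beta_2.S_P$ will come out wrong. (Incidentally, the $0$- and $\infty$-sections are horizontal, hence multiples of $F_a$ in your labeling, not $F_b$.)

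Second, the formulas $D_i=-2T_1-P$, $D_j=-2T_2-P$ from \Cref{lem:restrictionDitoZ} are restrictions to $\osi{K3}$ (that is, to $\wub$), whereas $T_P$ lies in $\osi{1+1}$; only the $0$-section of $T_P$ lands in $\osi{K3}$. The relevant input for $D_i|_{T_P}$ is instead the self-intersection formula $D_i|_{D_i}=-2\Theta$ for the universal theta divisor trivialized along the zero section. The paper uses this directly: since $\Theta$ has degree $n$ on the $\PP^1$-fibers of $S_P(n)$, one gets $D_i|_{S_P(n)}=-2S$ with no detour through $T_1,T_2,P$.

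Third, the stacky factor is not $\tfrac12\cdot\tfrac{1}{12}$. \Cref{rem:stacky12} pertains to $\osi{K3}$ and its symmetry group $S_3\times\ZZ/2\ZZ$; for $S_P\subset\osi{1+1}$ the relevant group is different. The paper works on level $n$ and determines the stabilizer of a component $S_P(n)$ in $\Sp(6,\ZZ/n\ZZ)$ to have order $16\,n^2 t(n)$, assembled from the involutions $x_i\mapsto\pm x_i$, the swap $x_1\leftrightarrow x_2$, the $n$-fold branching $z\mapsto z^n$ in the $\PP^1$-direction, and $\SL(2,\ZZ/n\ZZ)$ on the base. Dividing by $24$ would give intersection numbers off by a constant factor.
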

\begin{proof}
As before, we will work on a level cover $\oa(n)$, and then divide by the order of the stabilizer subgroup of $\Sp(6,\ZZ/n\ZZ)$ fixing the surface $S_P(n)$.
The preimage of $S_P$ in $\oa(n)$
consists of several components, each of which is isomorphic to $\oa[1](n) \times \PP^1$,
where $\oa[1](n)$ is the $0$-section of  the fiber product $\ox[1](n) \times_{\oa[1](n)} \ox[1](n)  \to \oa[1](n)$. We pick one such  component and denote it by~$S_P(n)$. We shall further
denote the (class of) the $0$-section (or equivalently the $\infty$-section) by $S$, whereas the class of a fiber will be denoted by $N$.

We first want to understand the restriction of the boundary $D(n)$ of $\oa(n)$ to $S_P(n)$. Since $S_P$ is contained in $\beta_2$, the surface $S_P(n)$ is contained in two boundary components which we denote $D_i$ and $D_j$.
We can assume that they correspond to $\langle x_1^2 \rangle $ and $\langle x_2^2 \rangle$ respectively.
The $0$-section and the $\infty$-section are cut out by two further boundary components, which we denote by $D_{i+j}$ and
$D_{i-j}$ and they correspond to the cones $\langle (x_1 \pm x_2)^2 \rangle $. The fibers of~$S_P(n)$ over the cusps of $\oa[1](n)$ are cut out by further boundary components $D_k$. We also recall that the covering map $\oa(n) \to \oa$ is branched of order $n$ along all boundary components.

We recall that the restriction of each boundary component to itself is equal to $D_i|_{D_i}=- (2/n)\Theta$ (see \cite[Prop.~5.1]{ergrhu2}), where $\Theta$ is the theta divisor trivialized along the $0$-section. However, we also note that the theta
divisor $\Theta$ has degree $n$ on the fibers of the surface $S_P$.
This implies that
\begin{equation}\label{equ:intersectionSP11}
D_i|_{S_P(n)} = - 2S, \quad i=1,2\,.
\end{equation}
Immediately from the geometric situation we conclude that
\begin{equation}\label{equ:intersectionSP21}
D_{i \pm j}|_{S_P(n)} = S\,
\end{equation}
and finally
\begin{equation}\label{equ:intersectionSP31}
\sum _{\{k \neq i,j, i \pm j\}} D_k|_{S_P(n)} = nt(n)N\,.
\end{equation}
We further have
\begin{equation*}
12L|_{S_P(n)} = nt(n)N\,.
\end{equation*}
This shows that
\begin{equation*}
L(nD(n))|_{S_P(n)} = \frac{1}{12}nt(n)N(-2nS + nt(n)N)=- \frac16 n^2t(n)\,.
\end{equation*}
Here we have taken the intersection with respect to $nD(n)$, in order to take into account the branching of order $n$ of the cover $\oa(n) \to \oa$ along the boundary.

We now must determine the order of the stabilizer subgroup $G(S_P(n))$ of~$S_P(n)$ in $\Sp(6,\ZZ/n\ZZ)$. We have the following elements. First we have the involutions $x_i \mapsto \pm x_i, \, i=1,2$, as well as the involution which
interchanges~$x_1$ and~$x_2$. The involution $(x_1,x_2) \mapsto (-x_1,-x_2)$ acts trivially on~$S_P(n)$ whereas the involutions $(x_1,x_2) \mapsto (-x_1,x_2)$ and  $(x_1,x_2) \mapsto (x_1,-x_2)$ both act by $z\mapsto 1/z$ on the fibers and, in particular, identify the boundary divisors cut out by $D_i$ and $D_j$.
Next we observe that the map $S_p(n) \to S_P$ further factors through the map $z \mapsto z^n$ on the $\PP^1$ fibers. This gives rise to  the $n$-fold branching along the boundary components $D_i$ and $D_j$.
Finally we have the action of the group
$\SL(2,\ZZ/n\ZZ)$, which also induces the $n$-fold  branching along the boundary components which cut out the fibers over the cusps. Note that due to $(x_1,x_2) \mapsto (-x_1,-x_2)$ and the involution $-1 \in \SL(2,\ZZ/n\ZZ)$ we have now also taken care of the involution $-1 \in \Sp(6,\ZZ/n\ZZ)$ (cf. \Cref{rem:stacky12}).
This gives us
\begin{equation*}
|G(S_P(n))| =  2^2\cdot  2 \cdot n \cdot |\SL(2,\ZZ/n\ZZ)| = 16n^2t(n)\,.
\end{equation*}
Altogether this gives
\begin{equation*}
LD.S_P= \frac{-1/6 n^2t(n)}{16n^2t(n)}= - \frac{1}{96}\,.
\end{equation*}
Since $L^2=0$ on $\oa[1]$ we also have
\begin{equation*}
L^2.S_P= 0.
\end{equation*}

The other intersection numbers are now also easy to compute. Using the fact that the $\PP^1$-bundle $S_P$ is trivial, we have $S^2=0$, and hence
\begin{equation*}
D^2.S_P=\frac{(-2nS + nt(n)N)^2}{|G(S_P(n))|}=\frac{-4n^2t(n)}{16n^2t(n)}= -\frac{1}{4}\,.
\end{equation*}
Similarly $ML.S_P=(12L-D)L.S_P$ is given by
\begin{equation*}
ML.S_P=\frac{(nt(n)N  + 2nS -nt(n)N)(\frac{1}{12}nt(n)N)}{|G(S_P(n))|}= \frac{1}{6}\cdot \frac{1}{16}=\frac{1}{96}.
\end{equation*}
Next we can compute $M^2.S_P=(12L-D)^2.S_P$ as
\begin{equation*}
M^2.S_P=\frac{(nt(n)N+2nS - nt(n)N)^2}{|G(S_P(n))|}=0.
\end{equation*}

It remains to compute $\beta_2.S_P$. On $\oa(n)$ we have that
\begin{equation*}
\beta_2(n)= \sum_{i < j}D_i(n)D_j(n)\,.
\end{equation*}
If we fix one component $S_P(n)$, then the boundary components which are relevant for us are $D_i,D_j,D_{i\pm j}$ and the $D_k$ cutting out the fibers over the cusps of $\oa[1](n)$. By our discussion above, namely \eqref{equ:intersectionSP11}, \eqref{equ:intersectionSP21}, and \eqref{equ:intersectionSP31}, the only non-zero intersections occur when one
factor from $D_i,D_j,D_{i \pm j}$ is paired with some~$D_k$. More precisely we obtain
\begin{equation*}
\beta_2(n).S_P(n)=\sum_s(D_i+D_j+D_{i-j} + D_{i+j})D_k=-2t(n)\,.
\end{equation*}
Taking the branching of $\oa(n) \to \oa$ of order $n$ along the boundary into account we thus find
\begin{equation*}
\beta_2.S_P=\frac{-2n^2t(n)}{16n^2t(n)} = - \frac18\,.
\end{equation*}
\end{proof}
\begin{cor}\label{cor:SF=SP}
The classes of the surfaces $S_F$ and $S_P$ in  $\Eff(\oa)$ coincide, i.e. $[S_F]=[S_P]$.
\end{cor}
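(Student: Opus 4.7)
The plan is to deduce the equality of homology classes directly from the coincidence of their intersection numbers against a basis of $H^4(\oa)$, using the non-degenerate Poincaré pairing between $H_4(\oa)$ and $H^4(\oa)$.

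Recall from the beginning of \Cref{sec:classes} that $H^4(\oa,\RR)$ is $4$-dimensional with basis $L^2,LM,M^2,\beta_2$. Because $\oa$ is a smooth DM stack satisfying Poincar\'e duality, the pairing
$$
H_4(\oa,\RR)\times H^4(\oa,\RR)\longrightarrow\RR
$$
is perfect, so two homology classes coincide if and only if they pair identically with every element of the above basis.

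For $S_F$ the pairings were tabulated in~\eqref{eq:intersect} of \Cref{prop:intersections}: we have $L^2.S_F=0$, $LM.S_F=\tfrac{1}{96}$, $M^2.S_F=0$ and $\beta_2.S_F=-\tfrac{1}{8}$. The four intersection numbers of $S_P$ with the same basis were computed in \Cref{prop:interectionumbersSP} and are exactly $0,\tfrac{1}{96},0,-\tfrac{1}{8}$ respectively. Since all four pairings match, $[S_F]=[S_P]$ in $H_4(\oa,\RR)$, which is the statement of the corollary.

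The only real work was therefore the computation of the intersection numbers of $S_P$ in \Cref{prop:interectionumbersSP}, and this is where the main obstacle lay: one has to keep track, via the level cover trick used throughout the paper, of the correct stacky factors and of the $n$-fold branching of $\oa(n)\to\oa$ along the boundary components meeting $S_P$. Once these factors are in place, the proof of the corollary itself is just the one-line comparison above.
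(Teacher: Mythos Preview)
Your proposal is correct and follows exactly the paper's own approach: the paper's proof is the one-line observation that the corollary is an immediate consequence of the intersection numbers for $S_F$ in \Cref{prop:intersections} and those for $S_P$ in \Cref{prop:interectionumbersSP}. Your version simply makes explicit the underlying Poincar\'e duality argument and lists the matching numbers.
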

\begin{proof}
This is an immediate consequence of the calculation of the class of $S_F$ in \Cref{prop:intersections} and the intersection numbers for $S_P$ computed above.
\end{proof}

\section{Further questions and directions}\label{sec:further}
In this section we make a number of remarks about various further directions one could pursue, and partial progress that can be made.
\begin{rem}[Further evidence for \Cref{conj:eff}]\label{rem:evidence}
While we were not able to prove that the surfaces $S_F$, $S_A$, $S_D$, $\osi{K3+1}$, $\osi{C4}$ generate $\oE(\oa)$, we verified the validity of some properties that would hold if this were the case --- which thus gives further evidence for our main \Cref{conj:eff}. First, recall that the conjecture is equivalent to the dual \Cref{conj:nef} that the cone $\Nef(\oa)$ is generated by $L^2,LM,M^2,F_1,F_2$. The first three of these classes are of course known to be nef, and thus the conjecture is equivalent to the statement that $F_1$ and $F_2$ are nef. The top intersection number of any collection of nef classes of arbitrary dimensions is always non-negative. Using van der Geer's~\cite{vdgeerchowa3} determination of the ring structure on the Chow ring $CH^*(\oa)$ and the result of~\cite{huto1} that $H^*(\oa)=CH^*(\oa)$, all top intersection numbers of these five classes on $\oa$ can be computed, and we verified, with Maple, that they are indeed all non-negative.
Numerical experiments show that even small variations of the intersection numbers in~\eqref{eq:intersect} easily produce classes with negative intersection numbers. This shows that this check is numerically non-trivial.

Secondly, one could try to construct further effective surfaces in $\oa$ and check whether their classes lie in the cone spanned by our five extremal effective surfaces. To this end, notice that we have constructed the surface~$S_P$, which turns out to have the same class as surface $S_F$. Additionally, our computations above show that the class of a generic fiber of the map $\pi|_{\beta_{1}}:\beta_{1}\to\calA_2$
is equal to a positive linear combination of $\osi{C4}$ and $\osi{K3+1}$. In addition we checked that the class of the surface described geometrically as the family of theta divisors on the Kummer surfaces that are the fibers
of $\pi:\oa\to\Sat$ over any curve $C\subset\calA_2$ also lies in our cone.

Finally, our computations show that the pseudoeffective cones of $\oa[1]\times\oa[2]$ and of $V=\pi^{-1}(\oa[1]\times\oa[1])$ are generated by suitable subsets of our surfaces --- and thus any effective surface contained in $\oa[1]\times\oa[2]\cup V$ has a class contained in the cone spanned by our five surfaces. In the next remark we discuss the relevance of $\oa[1]\times\oa[2]\cup V$ from the point of view of thinking of intersections of effective surfaces with effective divisors on~$\oa$.
\end{rem}
\begin{rem}[Towards proving extremality of some surfaces]\label{rem:oa1oa2v}
We note that the cone of effective divisors on~$\oa$ is known: $\oE[1](\oa)=\RR_{\ge 0}D+\RR_{\ge 0}\Hyp_3$, where  $\Hyp_3$ is the closure of the  locus of Jacobians of hyperelliptic curves. Equivalently, one can think of this as
abelian threefolds with a vanishing theta-null, that is the zero locus of the modular form $\Theta_{\rm null}:=\prod\theta_m^8$, where the product is taken over all theta constants with even characteristics.
This shows that its class is given by $\Hyp_3=9L-D$.
We now make the following general observation. Suppose $S$ in an effective surface on some variety $X$, and suppose $s$ is some section of a line bundle $L$ on $X$, whose zero locus
is a divisor $D\subset X$. If $S\not\subseteq D$, then the intersection $C=S\cap D$ is a curve whose homology class is equal to $S.L$. Suppose now $L$ is an ample line bundle on $X$ and $E$ is an effective divisor on $X$. Then if the intersection number $S.LE<0$, this means that for any section $s$ of $L$ the intersection $C=S\cap \{s=0\}$ must be contained in~$E$, which implies that $S\subset E$.

Applying this to $\oa$, where $\Nef[1](\oa)=\RR_{\ge 0}L+\RR_{\ge 0}M$, we see that if for an effective surface $S$ one has $S.M\Hyp_3<0$, then $S$ is contained in $\Hyp_3$.
Moreover, it is known that the moving slope of~$\oa$ is equal to $28/3$, which is to say that $H^0(\oa,n(28L-3D))=H^0(\oa,n(3M-8D))$
for $n$ large enough contains irreducible divisors, which thus do not contain $\Hyp_3$. The simplest example of such a divisor is the zero locus of the modular form $F_\theta:=(\sum \theta_m^{-8})\prod\theta_m^8$, i.e.~the sum of products of all but one theta constants (the eighth powers here and in the definition of the theta-null divisor are to avoid eighth roots of unity in the theta transformation formula). As was shown by Igusa~\cite{igusadesing} and Tsushima~\cite{tsushima}, the set-theoretic intersection of the zero loci of $\Theta_{\rm null}$ and $F_\theta$ in $\oa$ is equal precisely to $N(\oa[1]\times\oa[2])\cup V$
(actually with the two components appearing with multiplicities $240$ and $10$, see proof of~\cite[Prop.~3.2]{vdgeerchowa3}).
This implies that any effective surface $S\subset\oa$ such that $S.M(3M-8L)<0$ must be contained in the union of $N(\oa[1]\times\oa[2])\cup V$, and thus its class must be a positive linear combination of the generators of the effective cones there, which we have determined.

We note that $S_D.M(M-2L)=0$, and thus it is tempting to extend this analysis further by analyzing the base locus of the linear system $n|M-2L|=n|10L-D|$ on $\oa$. To this end, we let $P$ be the product of the minus eighth powers of the following 6 even theta constants with characteristics:
$$
 P:=\left(\tc{011}{011}\tc{011}{111}\tc{100}{000}\tc{100}{011}\tc{111}{000}\tc{111}{011}\right)^{-8},
$$
and define the following modular form
$$
 F_P:=\left(\sum_{\gamma\in \Sp(6,\ZZ/2\ZZ)} \gamma\circ P\right)\cdot\prod_m\theta_m^8,
$$
where the product is taken over all 36 even theta characteristics in genus 3, and we think of the action of the symplectic group $\Sp(6,\ZZ)$ on the set of even characteristics by affine-linear transformations, so that $\gamma\circ P$ is the product of minus eighth powers of the 6 even theta constants whose characteristics are the images of the above 6 under the action of $\gamma$.

By a careful computation with theta constants it is possible to show that~$F_P$ does not vanish identically either on $N(\oa[1]\times\oa[2])$ or on~$V$ (and we believe it to be the unique slope 10 modular form with this property). Thus if a surface $S\subset \oa$ satisfies $S.M(M-2L)<0$, then $S$ is contained in the zero locus of $F_P$ intersected with $N(\oa[1]\times\oa[2])\cup V$.

While this allows us to further reduce the possibilities for the classes of effective surfaces on~$\oa$, it still does not allow us to fully determine the effective cone, as we still have no way of dealing with surfaces $S$ such that $S.M(M-2L)>0$.
\end{rem}

\begin{rem}[Extremal versus extremal effective]\label{rem:pseudoeffeff}
Note that we have only proven that our five surfaces are extremal effective, i.e. that they are generating rays of $\Eff(\oa)$, but not that they are extremal, i.e.~not that they are generating rays of $\oE(\oa)$. While we were able to fully determine $\oE(\oa[1]\times\oa[2])$ and $\oE(V)$, and not just the effective cones, this does not immediately yield extremality of the surfaces $S_A,S_D,S_F$. However, since essentially our proofs that these surfaces on~$\oa$ are extremal effective proceed by arguing that they are contracted under morphisms defined by high multiples of~$L$ and~$M$, it is plausible that a little more can be said using the results of \cite{fulgerlehmann} on images of pseudoeffective classes under contracting morphisms. It does not seem to us that this would suffice to prove extremality of all our five surfaces.
\end{rem}

\begin{rem}[Higher dimension and higher genus]\label{rem:higherdim}
By~\cite[Lem.~2.6]{blankers}, we know that if $Z\subseteq\Perf[i]$ is extremal effective, then $Z\times \Perf[g-i]$ is extremal effective on $\Perf[i]\times\Perf[g-i]$.
Thus starting from the knowledge of (extremal) effective curves on $\Perf[i]$ by the results of~\cite{shepherdbarron}, and from the knowledge of the cones of effective divisors on $\Perf$ for low genus, one can construct
extremal effective classes of various dimensions on $\Perf[i]\times\Perf[g-i]$. Then recall that for any $0<i<g$ there is a product map $N:\Perf[i]\times\Perf[g-i]\to\Perf[g]$, which is a normalization of the image
(if $i\neq g-i$ and the $2:1$ map to the symmetric product otherwise), and
thus it is natural to ask whether the images under~$N$ of extremal effective cycles constructed as such products are extremal effective on $\Perf$. This happens to be the case for the images of the extremal effective surfaces under the map $N:\oa[1]\times\oa[2]\to\oa$ that we study. In general,
since $\Exc(M_g)=N(\oa[1]\times\Perf[g-1])$ for arbitrary $g$ by \Cref{thm:sb},
for extremal effective classes on $\oa[1]\times\Perf[g-1]$ that are contracted under $M_{g-1}$ (for example those that are a product of the form $\oa[1]\times Z$), it follows that their
images on $\Perf$ are also extremal effective. However, already for the map $N:\oa[2]\times\oa[2]\to\Perf[4]$ we do not know whether the images of extremal effective classes are extremal effective, as this is unrelated to the
exceptional locus of any semi-ample bundle on $\Perf[4]$. While it is possible to obtain a number of explicit results on extremality of higher-dimensional cycles this way, we did not pursue this systematically.

Recall also that taking the images under the map $N$ of the product of our surfaces with a fixed $B\in\Perf[g-3]$ defines similarly five effective surfaces contained in $\Perf[g]$ for any genus $g\ge 3$. The intersection numbers of these classes with a basis of $H^4(\Perf[g])$ are still the same, given by \Cref{prop:intersections}, for any $g\ge 3$. Thus our five surfaces generate homology in any genus, and it is natural to wonder whether they might always be extremal effective, or, more ambitiously, if they generate the cone of effective surfaces. While again some partial results can be obtained, note that in general there is no higher genus analog of the finite map $\ox\to\partial\oa$ that we used heavily, and moreover of course the homology of the closure of the boundary stratum $\osi{K3}$ (and of course its geometry) are much more complicated.
\end{rem}


\end{document}